\renewcommand{\O}{\cO}
\newcommand{\QQ}{\bQ}
\newcommand{\Tt}{\mathbb{T}}
\def\>{\geq}
\def\<{\leq}
\def\mcO{\mathcal{O}}
\newcommand{\Fdiff}{F\text{-}\diff}
\def\div{\operatorname{div}}
\def\mod{\operatorname{mod}}
\def\lct{\operatorname{lct}}
\def\fpt{\operatorname{fpt}}
\def\mDiff{\operatorname{\mathbf{Diff}}}
\begin{document}
\numberwithin{equation}{theorem}
\title{The $F$-different and a canonical bundle formula}
\author{Omprokash Das}
\address{School of Mathematics, Tata Institute of Fundamental Research, 1 Homi Bhabha Road, Colaba, Mumbai 400005}
\email{omdas@math.tifr.res.in}
\author[Schwede]{Karl Schwede}
\address{Department of Mathematics, University of Utah, 155 S 1400 E Room 233, Salt Lake City, UT, 84112}
\email{schwede@math.utah.edu}

\thanks{The first named author was supported by NSF FRG Grant DMS \#1265285, NSF Grant DMS \#1300750 and Simons Grant Award \#256202.  }
\thanks{The second named author was supported in part by the
  NSF FRG Grant DMS \#1265261/1501115, NSF CAREER Grant DMS \#1252860/1501102 and a Sloan
  Fellowship.}

  \subjclass[2010]{14F18, 13A35, 14B05, 14D99, 14E99}

\begin{abstract}
We study the structure of Frobenius splittings (and generalizations thereof) induced on compatible subvarieties $W \subseteq X$.  In particular, if the compatible splitting comes from a compatible splitting of a divisor on some birational model $E \subseteq X' \to X$ (ie, $W$ is a log canonical center), then we show that the divisor corresponding to the splitting on $W$ is bounded below by the divisorial part of the different as studied by Ambro, Kawamata, Koll\'ar, Shokurov, and others.  We also show that difference between the divisor associated to the splitting and the divisorial part of the different is largely governed by the (non-)Frobenius splitting of fibers of $E \to W$.  In doing this analysis, we recover an $F$-canonical bundle formula by reinterpretting techniques common in the theory of Frobenius splittings.
\end{abstract}

\maketitle

\section{Introduction}

This paper explores several questions that show up naturally in the study of Frobenius splittings, and also appear naturally as one compares the theory of $F$-singularities to the theory of the singularities of the minimal model program.

\begin{question}[Frobenius splitting perspective]
Suppose $X$ is a Frobenius split variety with an $F$-splitting $\phi$ and that $W$ is compatibly $\phi$-split with induced splitting $\phi_W$.  Now, $\phi_W$ corresponds to a divisor $D$ on (the normalization of) $W$.
\begin{itemize}
\item{} What divisor is it?
\item{} Where does it come from?
\item{} How does it vary as the characteristic varies?
\end{itemize}
\end{question}

This question turns out to be much more subtle when $W$ has codimension at least 2 in $X$.  The case where $W$ has codimension $1$ has already been largely answered in \cite{DasStronglyFRegularInversion}.

As mentioned, a very closely related question also appears in the comparison of $F$-singularities with the singularities of the minimal model program.  Let us sketch the situation under some simplifying assumptions.

Suppose that $(X, \Delta)$ is a proper log canonical pair and that $W \subseteq X$ is a normal (or minimal for simplicity) log canonical center.  Suppose the codimension of $W$ in $X$ is at least $2$.  Then one would like there to be divisors  $M_W$ and $\Diff_W$ (also denoted $\Delta_{W,\Div}$) on $W$ such that
\begin{equation}
(K_X + \Delta)|_W \sim_{\bQ} K_W + \Diff_W + M_W
\end{equation}
where
\begin{itemize}
\item[(a)]  $\Diff_W$ is effective and $(W, \Diff_W)$ is log canonical (or in the minimal case, even KLT), and
\item[(b)]  $M_W$ is semi-ample.
\end{itemize}
While such objects do not exist in general, some weak versions of them do exist, at least on birational models of $W$, see \cite{KawamataSubadjunctionOne}, \cite{KawamataSubadjunction2}, \cite[Theorem 0.2]{AmbroShokurovsBoundaryProperty}, \cite{HaconLCInversionAdjunction}, \cite[Conjecture 7.13]{ProkhorovShokurovSecondMainTheorem} and \cite{CortiFlipsFor3FoldsAnd4Folds}.
These results are consequences of \emph{canonical bundle formulae}.
We state two relevant results for comparison. The first is itself a version of a related canonical bundle formula (also known as an adjunction formula) and the second is frequently used consequence of such formula in characteristic $0$ (and which itself is also an adjunction formula).


\begin{theorem*}\cite[Theorem 0.2]{AmbroShokurovsBoundaryProperty}
	Let $f:X\to Z$ be a proper morphism between two normal varieties over $\bC$ with $f_*\mcO_X=\mcO_Z$. Let $(X, \Delta\>0)$ be a log variety such that $K_X+\Delta\sim_\bQ f^*L$ for some $\bQ$-Cartier $\bQ$-divisor $L$ on $X$. Suppose that $(X, \Delta)$ is KLT near the generic fiber of $f$. Then we have:
	\begin{enumerate}
		\item $K_X+\Delta\sim_\bQ f^*(K_Z+\Diff_Z+M_Z)$, for some $\bQ$-divisors $\Diff_Z$ and $M_Z$ on $Z$.
		\item The $b$-divisor $\mathbf{K}+\mathbf{\mDiff}$ is $b$-Cartier.
		\item The $b$-divisor $\mathbf{M}$ is $b$-nef.
		\end{enumerate}
	\end{theorem*}

\begin{theorem*}\cite{KawamataSubadjunctionOne, KawamataSubadjunction2, FG12}
	Let $X$ be a normal projective variety over $\bC$ and $\Delta\geq 0$ an effective $\bR$-divisor such that $(X, \Delta)$ is log canonical. Let $W$ be a minimal log canonical center of $(X, \Delta)$. Then there exists an effective $\bR$-divisor $\Delta_W\geq 0$ on $W$ such that
	\[(K_X+\Delta)|_W\sim_\bR K_W+\Delta_W \]
	and $(W, \Delta_W)$ has KLT singularities.
	\end{theorem*}	
In \cite{SchwedeFAdjunction}, a characteristic $p > 0$ variant of the second theorem above was shown.  Let us discuss this setup in more detail.
In characteristic $p > 0$, the $F$-singularities analog of log canonical singularities are \emph{$F$-pure singularities} and log canonical centers correspond to \emph{$F$-pure centers}  (which are a generalization of compatibly split subvarieties).  For instance, every log canonical center becomes an $F$-pure center at least in an ambient $F$-pure pair.  Relevant for us, in characteristic $p > 0$, if $(X, \Delta)$ is sharply $F$-pure and the index of $K_X +\Delta$ is not divisible by $p$, then if $W \subseteq X$ is a normal $F$-pure center, there is a \emph{canonically determined} effective divisor $\Delta_{W, \Fdiff}$ on $W$ such that
\begin{equation}
\label{eq.FadjunctionFormula}
(K_X + \Delta)|_W \sim_{\bQ} K_W + \Delta_{W, \Fdiff}
\end{equation}
where
\begin{itemize}
\item[(a')] $(W, \Delta_{W, \Fdiff})$ is $F$-pure and if $W$ is minimal, $(W, \Delta_{W, \Fdiff})$ is strongly $F$-regular (an analog of KLT singularities).
\end{itemize}
Note in this case there doesn't initially appear to be a moduli part, there is a single effective divisor.  Regardless, because of \autoref{eq.FadjunctionFormula}, it might also be natural to expect that there is a Frobenius-version of the canonical bundle formula lurking behind the scenes.  This is exactly what we will show.

Another way to phrase our initial question is:
\begin{question}[Singularities of the MMP perspective]
Suppose that $(X, \Delta)$ is a sharply $F$-pure pair and $W \subseteq X$ is a normal $F$-pure center.  Consider $\Delta_{W, \Fdiff}$ the $F$-different on $W$.
\begin{itemize}
\item{} How does $\Delta_{\Fdiff}$ compare with $\Diff_W$ and $M_W$?
\item{} Where does it come from?
\item{} How does it vary as the characteristic varies?
\end{itemize}
\end{question}

Our first result is:

\begin{theoremA*}\textnormal{[\autoref{cor.FDiffGeqDivDiff}]}
Suppose that $(X, \Delta)$ is a sharply $F$-pure (and hence log canonical) pair and $W \subseteq X$ is a log canonical center (and hence an $F$-pure center).  Let $\nu : W^{\textnormal{N}} \to W$ be the normalization of $W$ and let $\Diff_{W^{\textnormal{N}}}=\Delta_{W^N, \div}$ and $\Delta_{W^{\textnormal{N}}, \Fdiff}$ be the different and $F$-different respectively.  Then $\Diff_{W^{\textnormal{N}}}=\Delta_{W^N, \div} \leq \Delta_{W^{\textnormal{N}}, \Fdiff}$.
\end{theoremA*}

It then becomes very natural to study the difference $\Delta_{W^{\textnormal{N}}, \Fdiff} - \Diff_W$ which should be viewed as some characteristic $p > 0$ analog of the moduli part of the different.  Already we know it is an effective divisor.

In characteristic zero, $M_W$, the moduli part of the different comes from analyzing a family.  Consider the following situation.
Let $(X', \Delta') \xrightarrow{\pi} (X, \Delta)$ be a birational model of $X$ with $K_{X'} + \Delta' = \pi^*(K_X + \Delta)$, such that $\Delta' \geq 0$ and $E$ is a normal prime Weil divisor with discrepancy $-1$ such that $\pi(E) = W$.  Then $\pi_E : E \to W$ can be viewed as a family, indeed a flat family if $W$ is a curve.  Note $E$ has coefficient $1$ in $\Delta'$, and so since $K_{X'} + \Delta'$ is pulled back from the base, if $\Delta_E$ is the ordinary different of $K_{X'} + \Delta'$ on $E$ (in particular, $(K_{X'} + \Delta')|_E = K_E + \Delta_E$), then $K_E + \Delta_E \sim_{\bQ, \pi_E} 0$.  It is then natural to try to study the moduli part of the different via a canonical bundle formula, i.e., find a divisor $\Delta_W \geq 0$ such that $\pi_E^*(K_W + \Delta_W) \sim_{\bQ} K_E + \Delta_E$.

It turns out that using a simple method coming from the origins of Frobenius splitting theory, we obtain a canonical bundle formula in characteristic $p > 0$.

\begin{theoremB*}\textnormal{[\autoref{thm.CanonicalBundleFormla}, \autoref{cor.IsFrobSplitting}]}
Suppose that $\pi : E \to W$ is a proper map between normal $F$-finite integral schemes with $\pi_* \O_E = \O_W$. Suppose also that $\Delta \geq 0$ is a $\bQ$-divisor on $E$ such that $(p^e - 1)(K_E + \Delta)$ is linearly equivalent to the pullback of some Cartier divisor on $W$. Suppose that the generic fiber of $(E, \Delta)$ over $W$ is Frobenius split. Then there exists a canonically determined $\bQ$-divisor $\Delta_W \geq 0$ on $W$ such that $\pi^*(K_W + \Delta_W) \sim_{\bQ} K_X + \Delta$.

Furthermore, we can describe the support of $\Delta_W$ as follows.  Assume that $W$ is 1-dimensional (which we may certainly do if we just care about the support of $\Delta_W$) and that $\Delta_{\mathrm{vert}}$ is the vertical part of $\Delta$.  If additionally the fibers of $\pi$ are geometrically normal, then $\pi^*\Delta_W - \Delta_{\mathrm{vert}}$ is nonzero precisely over those points $t \in W$ where the fiber $(E_t, \Delta_t)$ is not Frobenius split.
\end{theoremB*}

Indeed, we also obtain versions of the above result which hold when $\Delta$ is not necessarily effective, see \autoref{cor.GenCanonicalBundleFormula}.  In some cases, we also obtain results on the singularities of $(W, \Delta_W)$, see \autoref{rem.EasyCanBundleSingularities}.
It also follows that the $\Delta_W$ we construct is essentially the one obtained in \autoref{eq.FadjunctionFormula} when $W$ is an $F$-pure center that is also a log canonical center and $E \to W$ a divisor with discrepancy $-1$ mapping to $W$, see \autoref{cor.FadjunctionViaCanBundle}.

\begin{remark}
For those coming from the Frobenius splitting perspective, this result can be specialized follows.  Suppose $X$ is Frobenius split and the splitting extends to a resolution of singularities $X' \to X$, and there is a compatibly split divisor $E \subseteq X'$ mapping to the necessarily compatibly split $W \subseteq X$.  Then the splitting on $W$, and the corresponding divisor on $W$, is largely governed by the fibers of $E \to W$ which are not Frobenius split (in a way somewhat compatible with the splitting of $X'$).
\end{remark}

In characteristic $p > 5$, if $X$ is 3-dimensional, $(X, \Delta)$ is sharply $F$-pure and $W$ is a 1-dimensional minimal log canonical center, then we can obtain $E \to W$ satisfying the first part of the above theorem via the MMP \cite{HaconXuThreeDimensionalMinimalModel,BirkarExistenceOfFlipsMinimalModels,BirkarWaldronMoriFiberSpaces}. In some cases, it is also possible to reduce to the case of integral fibers by employing base change, semi-stable reduction and results on $\overline{\mathcal{M}}_{0,n}$ (see \cite[Theorem 4.8]{DasHacon}). This gives us a precise way to describe the $F$-different, see \autoref{alg.GeometricFDifferentAlg} for details.

We also tackle the question of how the $F$-different behaves as the characteristic varies.  To do this, we first need to assume the weak ordinarity conjecture (which implies that log canonical singularities become $F$-pure after reduction to characteristic $p > 0$ \cite{MustataSrinivasOrdinary,TakagiAdjointIdealsAndACorrespondence}).

\begin{theoremC*}\textnormal{[\autoref{thm.ModuliPartOfFdifferentMoves}]}
Let $(X, \Delta \geq 0)$ be a normal pair in characteristic zero with $(p^e-1)(K_X + \Delta)$ Cartier for some $e>0$, and $W$ a normal LC-center of $(X, \Delta)$. Assume that the $\mathbf{b}$-divisor $\mathbf{K+\Delta_{\div}}$ descends to $W$;\footnote{Meaning that the ${\bf b}$-divisor $K_{W'} + \Delta_{W'}$ can be pulled back from $W$.} and in particular $K_W+\Delta_{W, \div}$ is $\bQ$-Cartier.
We consider the behavior of $(X_p, \Delta_p)$ after reduction to characteristic $p \gg 0$.

Assume the weak ordinarity conjecture.
Let $Q \in W$ be a point which is not the generic point of $W$. Then there exist infinitely many primes $p > 0$ such that if $\Delta_{W_p, \Fdiff} \geq (\Delta_{W,\Div})_p$ is the $F$-different of $(X_p, \Delta_p)$ along $W_p$, then $\Delta_{W_p, \Fdiff} = (\Delta_{W,\Div})_p$ near $Q$.  In other words, $Q$ is not contained in $\Supp(\Delta_{W_p, \Fdiff} - (\Delta_{W,\Div})_p)$.
\end{theoremC*}
\vskip 12pt
\noindent
{\it Thanks:}  The authors would like to thank Christopher Hacon, Ching-Jui (Ray) Lai, Zsolt Patakfalvi, David Speyer, Shunsuke Takagi and Chenyang Xu for many useful discussions.  We would also like to thank Christopher Hacon, Shunsuke Takagi and the referee for numerous useful comments on previous drafts.  We also thank David Speyer and Chenyang Xu for stimulating discussions with the second author where \autoref{ex.EllipticCurveExample} was first worked out.

\section{Preliminaries}

\begin{convention}
Throughout this article, all schemes are assumed to be separated and excellent.  All schemes in characteristic zero are of essentially finite type over a field.  All schemes in characteristic $p > 0$ are assumed to be $F$-finite.  Whenever we deal with a pair $(X, \Delta)$, it is assumed that $X$ is normal and that $K_X + \Delta$ is $\bQ$-Cartier, unless specified otherwise. If $X$ is an integral scheme, by $K(X)$ we mean either the field of rational functions of $X$ and also the constant sheaf of rational functions on $X$, depending on the context.  Whenever $\pi : X' \to X$ is a proper birational map of normal integral schemes, we always choose $K_{X'}$ and $K_{X}$ such that $\pi_* K_{X'} = K_X$ (in other words, we implicitly fix the canonical {\bf b}-divisor).
\end{convention}

\subsection{Maps and $\bQ$-divisors.}
\label{subsec.MapsAndQDivisors}
Suppose that $X$ is an $F$-finite normal integral scheme.  Given a nonzero map $\phi : F^e_* \O_X \to \O_X$, we have $\phi \in \Hom(F^e_* \O_X, \O_X) \cong \Gamma(X, F^e_* \O_X((1-p^e)K_X))$, and hence $\phi$ corresponds to an effective divisor linearly equivalent to $(1-p^e)K_X$.
\begin{itemize}
\item{} We use $D_{\phi}$ to denote this effective divisor corresponding to $\phi$.
\item{} We write $\Delta_{\phi} := {1 \over p^e - 1} D_{\phi}$ and note that $\Delta_{\phi} \sim_{\bQ} -K_X$.
\end{itemize}
It is easy to see that this process can be reversed.  Given an effective divisor $\Delta$ such that $\O_X( (1-p^e)(K_X + \Delta)) \cong \O_X$, one obtains a nonzero map $\phi = \phi_{\Delta}^e$ with $\Delta_{\phi} = \Delta$.  The choice of $\phi$ is also unique up to pre-multiplication by units of $\Gamma(X, \O_X)$.  One can also form the self-composition of $\phi =: \phi^e$.  For instance $\phi^{2e} = \phi^e \circ (F^e_* \phi^e) \in \Hom(F^{2e}_* \O_X, \O_X)$ and $\phi^{ne} = \phi^e \circ (F^e_* \phi^{(n-1)e}) = \phi^{2e} \circ (F^{2e}_* \phi^{(n-2)e}) = \ldots$.  It is not difficult to check that $\Delta_{\phi} = \Delta_{\phi^2} = \Delta_{\phi^3} = \ldots$.  For additional discussion of this correspondence between maps and divisors, see \cite[Section 4]{BlickleSchwedeSurveyPMinusE}.

Now suppose that $\sL$ is an arbitrary line bundle on the $F$-finite normal integral scheme $X$.  Then using the same argument, a map $\phi : F^e_* \sL \to \O_X$ yields an effective $\bQ$-divisor $\Delta_{\phi}$ with $\O_X( (1-p^e)(K_X + \Delta_{\phi}) \cong \sL$.  As before, self-(twisted)-composition of this map yields the same divisor as $\Delta_{\phi}$ (for details, see \cite[Lemma 4.0.1]{BlickleSchwedeSurveyPMinusE}).  Conversely, given an effective $\bQ$-divisor $\Delta \geq 0$ such that $(p^e - 1)(K_X + \Delta)$ is Cartier, then setting $\sL = \O_X( (1-p^e)(K_X + \Delta))$ we obtain a map $\phi_{\Delta} : F^e_* \sL \to \O_X$.

\subsubsection{Maps and non-effective $\bQ$-divisors}
\label{subSec.MapsAndNonEffective}

We now recall how to interpret non-effective $\Delta$.  The main idea is that we are still given a map from $F^e_* \sL$ but that the image need not be in $\O_X$, it might be in some fractional ideal in $K(X)$.  Indeed, suppose we are given an $\O_X$-linear map $\phi : F^e_* \sL \to K(X)$.  Then it is easy to see (at least locally) that for some effective Cartier divisor $E$ on $X$ that $\phi( F^e_* \sL( (1-p^e)E)) \subseteq \O_X$.  This gives us an effective divisor $\Delta'$ such that $\O_X((1-p^e)(K_X + \Delta')) \cong \sL((1-p^e)E)$.

\begin{definition}
\label{def.NoneffectiveDelta}
Set $\Delta_{\phi} = \Delta' - E$.
\end{definition}

Note that if $\phi : F^e_* \sL \to K(X)$ induces $\Delta_{\phi}$ then $\phi(F^e_* \sL) \subseteq \O_X(G)$ where $G$ is some effective Weil divisor supported where $\Delta_{\phi}$ is not effective.  Indeed, simply reflexify $\phi(F^e_* \sL)$.

\begin{lemma}
With notations as above, $\Delta_{\phi}$ is independent of the choice of $E$.
\end{lemma}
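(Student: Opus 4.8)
\noindent \emph{Sketch of the intended proof.}\quad
The plan is to show that if $E_1$ and $E_2$ are two effective Cartier divisors with $\phi\bigl(F^e_*\sL((1-p^e)E_i)\bigr)\subseteq \O_X$ for $i=1,2$, then the divisors $\Delta'_1-E_1$ and $\Delta'_2-E_2$ produced from them coincide. First I would observe that $E_1+E_2$ is again an admissible choice: since $E_1,E_2\geq 0$ and $1-p^e<0$ we have $\sL\bigl((1-p^e)(E_1+E_2)\bigr)\subseteq \sL\bigl((1-p^e)E_i\bigr)$, so $\phi$ carries $F^e_*\sL\bigl((1-p^e)(E_1+E_2)\bigr)$ into $\O_X$. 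Hence it is enough to compare an admissible $E$ with $E+E'$ for $E'\geq 0$ Cartier, applying this once to $E_1\leq E_1+E_2$ and once to $E_2\leq E_1+E_2$ in order to chain $\Delta'_1-E_1$ and $\Delta'_2-E_2$ through the divisor computed from $E_1+E_2$. Throughout, write $\phi_E\colon F^e_*\sL((1-p^e)E)\to \O_X$ for the restriction of $\phi$ and $\Delta'_E:=\Delta_{\phi_E}$, so that the divisor attached to the choice $E$ is $\Delta'_E-E$.

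The crux is the identity $\Delta'_{E+E'}=\Delta'_E+E'$; once we have it, $\Delta'_{E+E'}-(E+E')=\Delta'_E-E$, which is what we want. To prove it, note that $\phi_{E+E'}=\phi_E\circ F^e_*(\iota)$, where $\iota\colon \sL((1-p^e)(E+E'))\hookrightarrow \sL((1-p^e)E)$ is multiplication by the canonical section $t\in\Gamma\bigl(X,\O_X((p^e-1)E')\bigr)$ with $\div(t)=(p^e-1)E'$. Under the identification $\Hom_{\O_X}\bigl(F^e_*\sM,\O_X\bigr)\cong \Gamma\bigl(X, F^e_*(\sM^{-1}\otimes\O_X((1-p^e)K_X))\bigr)$ underlying the correspondence between such maps and divisors (\cite[Section 4]{BlickleSchwedeSurveyPMinusE}), precomposition with $F^e_*(\iota)$ corresponds to multiplying the associated section by $t$. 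Hence $D_{\phi_{E+E'}}=D_{\phi_E}+(p^e-1)E'$, and dividing by $p^e-1$ gives $\Delta'_{E+E'}=\Delta'_E+E'$.

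The only step that needs genuine care is this last compatibility, together with the nuisance that $K_X$ is a priori merely a Weil divisor. I would handle both by verifying the identity $\Delta'_{E+E'}=\Delta'_E+E'$ after localizing at each codimension-one point of $X$: there $\sL$ and $\O_X((1-p^e)K_X)$ become trivial, and the claim reduces to an elementary computation over a DVR $V$ with maximal ideal $\mathfrak{m}$. Writing $\phi$ in terms of a fixed generator of $\Hom_V(F^e_*V,V)$ and an element of $K(X)$, and taking $E=n\cdot\{\mathfrak{m}\}$, one reads off that the coefficient of $\Delta'_E$ along $\{\mathfrak{m}\}$ has the form $n+c$ with $c$ independent of $n$, so $\Delta'_E-E$ has coefficient $c$ for every admissible $n$. (Equivalently, one may carry out the argument of the previous paragraph with every sheaf replaced by its reflexive hull.) All remaining steps are formal, so I expect this DVR bookkeeping to be the only place where a genuine --- if routine --- verification is needed.
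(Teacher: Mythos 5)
Your proof is correct and follows essentially the same route as the paper: reduce to comparing an admissible $E$ with a larger admissible divisor, and check by a local computation at codimension-one points that precomposing $\phi$ with the inclusion $\sL((1-p^e)(E+E'))\hookrightarrow \sL((1-p^e)E)$ shifts the associated divisor by $(p^e-1)E'$, so $\Delta'_{E+E'}-(E+E')=\Delta'_E-E$. Your detour through $E_1+E_2$ is only a minor refinement of the paper's ``without loss of generality $E_1\leq E_2$'' (two effective Cartier divisors need not be comparable, so the common upper bound makes that step literal), but the substance of the argument is identical.
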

\begin{proof}
Suppose that $E_1, E_2$ are two effective Cartier divisors with $\phi( F^e_* \sL( (1-p^e)E_i)) \subseteq \O_X$.  Without loss of generality, we may assume that $E_1 \leq E_2$.  Then we have the following composition
\[
\mu : F^e_* \sL( (1-p^e)E_1 + (1-p^e)(E_2 - E_1)) \hookrightarrow F^e_* \sL( (1-p^e)E_1) \xrightarrow{\psi} \O_X.
\]
The map $\psi$ yields $\Delta_1 = \Delta_{\psi} - E_1$ and the composition yields $\Delta_2 = \Delta_{\mu} - E_2$.  On the other hand, straightforward local computation shows that $\Delta_1 + (E_2 - E_1) = \Delta_2$.  Hence $\Delta_1 - E_1 = \Delta_2 - E_2$ as claimed.
\end{proof}

With notation as above, suppose that we fix an embedding $\sL \subseteq K(X)$.  Then our map $\phi = \phi^e : F^e_* \sL \to K(X)$ yields a map $\phi^e : F^e_* K(X) \to K(X)$.  We can then form $\phi^{2e} = \phi \circ (F^e_* \phi) : F^{2e}_* K(X) \to K(X)$ and more generally $$\phi^{ne} = \phi \circ (F^e_* \phi^{(n-1)e}) : F^{ne}_* K(X) \to K(X).$$  We then restrict $\phi^{ne}$ to $F^{ne}_* \sL^{1 + p^e + \ldots + p^{(n-1)e}}$ yielding:
\begin{equation}
\label{eq.NoneffectiveMapComposition}
\phi^{ne} : F^{ne}_* \sL^{1 + p^e + \ldots + p^{(n-1)e}} \to K(X).
\end{equation}


\begin{lemma}
With notations as above, $\Delta_{\phi^{ne}} = \Delta_{\phi^e}$.
\end{lemma}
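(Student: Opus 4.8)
The plan is to reduce the general statement $\Delta_{\phi^{ne}} = \Delta_{\phi^e}$ to the already-established effective case by ``clearing denominators'' with an auxiliary effective Cartier divisor. Recall from \autoref{def.NoneffectiveDelta} that $\Delta_{\phi^e}$ is defined by choosing an effective Cartier divisor $E$ with $\phi^e\bigl(F^e_*\sL((1-p^e)E)\bigr)\subseteq \O_X$, setting $\Delta' = \Delta_{\psi}$ for the resulting honest map $\psi$ to $\O_X$, and putting $\Delta_{\phi^e} = \Delta' - E$; by the preceding lemma this is independent of $E$. So the first step is to fix such an $E$ for $\phi^e$ and observe that $E$ can be enlarged freely; I would like a single $E$ that works simultaneously at every stage of the composition.

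Next I would set up the twisted self-composition over $\O_X$. Write $\sM = \sL((1-p^e)E)$, so that $\psi := \phi^e|_{F^e_*\sM} : F^e_*\sM \to \O_X$ is an honest $\O_X$-linear map with $\Delta_\psi = \Delta' = \Delta_{\phi^e} + E$, and $(1-p^e)(K_X + \Delta_\psi) \sim \sM$. Now the discussion in \autoref{subsec.MapsAndQDivisors} (specifically the line-bundle version, and \cite[Lemma 4.0.1]{BlickleSchwedeSurveyPMinusE}) gives that the $n$-fold twisted composition
\[
\psi^{ne} : F^{ne}_* \sM^{1 + p^e + \cdots + p^{(n-1)e}} \to \O_X
\]
satisfies $\Delta_{\psi^{ne}} = \Delta_\psi$. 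The key point is then to identify $\psi^{ne}$, up to twisting by a divisor, with the restriction of $\phi^{ne}$ to $F^{ne}_* \sL^{1 + p^e + \cdots + p^{(n-1)e}}$. Unwinding the definition of the composition $\phi^{ne} = \phi^e \circ (F^e_* \phi^{(n-1)e})$ and tracking how the auxiliary twist by $(1-p^e)E$ propagates through each Frobenius push-forward: at the $k$-th stage one picks up a factor $F^{ke}_*\O_X\bigl((1-p^e)p^{(n-1-k)e}\cdots\bigr)$, and summing the geometric series shows that $\psi^{ne}$ is exactly $\phi^{ne}$ restricted to $F^{ne}_*\sL^{1+p^e+\cdots+p^{(n-1)e}}\bigl((1-p^{ne})E\bigr)$, i.e. the ``$E$ for $\phi^{ne}$'' is $E$ itself scaled appropriately. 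Hence by \autoref{def.NoneffectiveDelta} applied to $\phi^{ne}$ with this choice of clearing divisor,
\[
\Delta_{\phi^{ne}} = \Delta_{\psi^{ne}} - E = \Delta_\psi - E = \Delta_{\phi^e},
\]
which is the claim.

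The main obstacle is the bookkeeping in the middle step: correctly computing which twist $(1-p^{ne})E$ (as opposed to some other combination of $p^{ie}E$) makes $\phi^{ne}$ land in $\O_X$, and verifying that the composition of the twisted maps over $\O_X$ genuinely agrees with the restriction of the composition of the rational maps — this is a local computation in $K(X)$ that has to be done carefully because each $F^e_*$ rescales divisors by $p^e$. Once the exponent $1 + p^e + \cdots + p^{(n-1)e} = \frac{p^{ne}-1}{p^e-1}$ is seen to be forced, everything else is an application of the effective-case lemma and the independence-of-$E$ lemma already proved. An alternative, slightly slicker route would be to fix the embedding $\sL \subseteq K(X)$ and simply observe that on the level of maps $F^{\bullet}_* K(X)\to K(X)$ there is nothing to clear, so $\Delta_{\phi^{ne}}$ computed from $\phi^{ne}|_{F^{ne}_*\sL^{\cdots}}$ can be read off after twisting down to any effective $E$ that works for $\phi^e$; but this still requires the same geometric-series computation to know that one such $E$ suffices uniformly, so it does not really avoid the obstacle.
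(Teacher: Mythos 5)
Your proposal is correct and follows essentially the same route as the paper's proof: fix one effective Cartier divisor $E$ clearing the denominators of $\phi^e$, note that the identity $(1-p^e)(1+p^e+\cdots+p^{(n-1)e}) = 1-p^{ne}$ makes the same $E$ clear the denominators of the twisted composition $\phi^{ne}$, identify the restricted map with the twisted self-composition of the honest map $F^e_*\sL((1-p^e)E)\to\O_X$, and invoke the effective-case result (\cite[Lemma 4.1.2]{BlickleSchwedeSurveyPMinusE}, \cite[Theorem 3.11]{SchwedeFAdjunction}) together with the definition $\Delta_{\phi^{ne}}=\Delta'^{ne}-E$. The bookkeeping you flag as the main obstacle is exactly the geometric-series observation the paper compresses into the phrase ``that is how composition works for effective divisors,'' so there is no gap.
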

\begin{proof}
Choose $E$ such that $\phi^e( F^e_* \sL( (1-p^e)E)) \subseteq \O_X$.  It follows immediately that $\phi^{ne}( F^{ne}_* \sL^{1 + p^e + \ldots + p^{(n-1)e}}( (1-p^{ne})E)) \subseteq \O_X$ since that is how composition works for effective divisors.  Define $\Delta'^e$ and $\Delta'^{ne}$ corresponding to the restricted maps $\phi^e|_{F^e_* \sL( (1-p^e)E)}$ and $\phi^{ne}|_{F^{ne}_* \sL^{1 + \ldots + p^{(n-1)e}}( (1-p^{ne})E)}$ respectively.  We know $\Delta'^e - E =: \Delta_{\phi^e}$ and also that $\Delta'^{ne} - E =: \Delta_{\phi^{ne}}$.  But $\Delta'^{ne} = \Delta'^e$ since they correspond to compositions of the same map (see \cite[Lemma 4.1.2]{BlickleSchwedeSurveyPMinusE} and \cite[Theorem 3.11]{SchwedeFAdjunction}).  The conclusion follows.
\end{proof}

\subsection{$p^{-e}$-linear maps and base extension}
\label{subsec.pMapsAndBaseExtension}

The following subsection is only utilized briefly in \autoref{alg.GeometricFDifferentAlg} and can be skipped on a first reading.

We start this subsection by  summarizing some results from \cite{SchwedeTuckerTestIdealFiniteMaps}.
Suppose that $f : Z \to W$ is a finite dominant map of normal $F$-finite integral schemes and $\phi_W : F^e_* \sL_W \to K(W)$ is an $\O_W$-linear map from a line bundle $\sL_W$ on $W$.  Then $\phi_W$ corresponds to a divisor $\Delta_W$ as above.  Let $\Tt : f_* K(Z) \to K(W)$ be a nonzero map between the fraction fields of $Z$ and $W$ respectively.  Using an argument similar to that above, the map $\Tt|_{f_* \O_Z}$ gives us a Weil divisor $\mathcal{R}_{\Tt}$ which should be thought of as a type ramification divisor.  If $f$ is separable, then we typically assume that $\Tt$ is the field trace in which case $\mathcal{R}_{\Tt}$ is exactly the usual ramification divisor.

\begin{lemma}
With notation as above, fix a divisor $K_W$ so that $\sL_W = \O_W( (1-p^e)(K_W + \Delta_W)) \subseteq K(W)$.  This lets us extend $\phi_W$ to $\phi_W : F^e_* K(W) \to K(W)$.
\begin{itemize}
\item[(a)]  There is a unique map $\phi_Z : F^e_* K(Z) \to K(Z)$ so that $\Tt \circ f_* \phi_Z = \phi_W \circ \Tt$.  This map is called the $\Tt$-transpose of $\phi_W$.
\item[(b)]  $\mathcal{R}_{\Tt} \sim K_{Z/W} = K_Z - f^* K_W$ and hence we can fix $K_Z$ to be the divisor $f^* K_W + \mathcal{R}_{\Tt} $.
\item[(c)]  Via the inclusion $\sL_W \subseteq K(W)$ we obtain $\sL_Z := f^* \sL_W \subseteq K(Z)$ and thus obtain a map $\phi_Z := \phi_Z|_{\sL_W} : F^e_* \sL_W \to K(Z)$ with $\Delta_Z$ corresponding to $\phi_Z$.  With this notation, $f^* (K_W + \Delta_W) = K_Z + \Delta_Z$ and hence $\Delta_Z - f^* \Delta_W = -\mathcal{R}_{\Tt}$.
\end{itemize}
\end{lemma}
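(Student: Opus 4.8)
The plan is to push all three assertions down to the function fields of $Z$ and $W$, where they become rank-one linear algebra over $K(Z)$, and then to keep track of the reflexive rank-one sheaves that occur as fractional ideals inside $K(Z)$. For part (a), restricting to function fields turns the desired identity $\Tt \circ f_* \phi_Z = \phi_W \circ \Tt$ into the problem of finding a $p^{-e}$-linear self-map $\phi_Z$ of $K(Z)$ with $\Tt \circ \phi_Z = \phi_W \circ F^e_* \Tt$ as additive maps $F^e_* K(Z) \to K(W)$. Since $Z$ and $W$ are $F$-finite and $K(Z)/K(W)$ is finite, $\Hom_{K(W)}(K(Z), K(W))$ is free of rank one over $K(Z)$ and is generated by the nonzero $\Tt$; from this one checks that the space of $p^{-e}$-linear self-maps of $K(Z)$ and the space into which post-composition by $\Tt$ maps it are both free of rank one over $F^e_* K(Z)$, and that post-composition by $\Tt$ is $F^e_* K(Z)$-linear. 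It is injective: a nonzero $p^{-e}$-linear self-map of a field is automatically surjective (if $\phi_Z(x_0) = c_0 \neq 0$ then $\phi_Z((d/c_0)^{p^e} x_0) = d$ for every $d$), so its image cannot be contained in the proper subspace $\ker \Tt \subsetneq K(Z)$. An injective linear map between free rank-one modules over the field $F^e_* K(Z)$ is an isomorphism, which gives both the existence and the uniqueness of the transpose $\phi_Z$; this is essentially \cite{SchwedeTuckerTestIdealFiniteMaps}.

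For part (b), the sheaf $\sHom_{\O_W}(f_* \O_Z, \O_W)$, viewed as an $\O_Z$-module, is reflexive of rank one — rank one by the computation in (a) and reflexive since $Z$ and $W$ are normal — and Grothendieck duality for the finite morphism $f$ identifies it with $f^! \O_W \cong \O_Z(K_{Z/W})$, where $K_{Z/W} := K_Z - f^* K_W$ for compatibly chosen canonical divisors. The hypothesis that $\Tt$ carries $f_* \O_Z$ into $\O_W$ says exactly that $\Tt$ is a nonzero global section of $\O_Z(K_{Z/W})$, hence corresponds to an effective Weil divisor $\mathcal{R}_{\Tt} \sim K_{Z/W}$; one is then free to pick $K_Z$ inside its linear class so that $K_Z = f^* K_W + \mathcal{R}_{\Tt}$ holds as divisors.

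For part (c), fix the embedding $\sL_W \subseteq K(W)$ so that $\sL_W = \O_W((1-p^e)(K_W + \Delta_W))$ and set $\sL_Z := f^* \sL_W \subseteq K(Z)$; restricting the transpose gives $\phi_Z : F^e_* \sL_Z \to K(Z)$. By the correspondence between such maps and (possibly non-effective) $\bQ$-divisors recalled in \autoref{subSec.MapsAndNonEffective}, $\phi_Z$ yields a $\bQ$-divisor $\Delta_Z$ with $\O_Z((1-p^e)(K_Z + \Delta_Z)) \cong \sL_Z = f^* \O_W((1-p^e)(K_W + \Delta_W))$, so at least $K_Z + \Delta_Z \sim_{\bQ} f^*(K_W + \Delta_W)$. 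To promote this to an equality of divisors I would work locally at each codimension-one point of $Z$, using the relation $\Tt \circ f_* \phi_Z = \phi_W \circ \Tt$ and the $K_Z$ pinned down in part (b) to see that $\O_Z((1-p^e)(K_Z + \Delta_Z))$ and $f^* \O_W((1-p^e)(K_W + \Delta_W))$ agree as subsheaves of $K(Z)$, not merely up to abstract isomorphism. Together with $K_Z = f^* K_W + \mathcal{R}_{\Tt}$ this yields $K_Z + \Delta_Z = f^*(K_W + \Delta_W)$ and hence $\Delta_Z - f^* \Delta_W = f^* K_W - K_Z = -\mathcal{R}_{\Tt}$.

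The conceptual steps — existence and uniqueness of the transpose in (a), and the identification $f^! \O_W \cong \O_Z(K_{Z/W})$ in (b) — are standard, so I expect the only genuinely delicate point to be the last step of (c): one must verify that the three identifications of rank-one reflexive sheaves with fractional ideals of $K(Z)$ that are in play — via the fixed embedding $\sL_W \hookrightarrow K(W)$, via pullback along $f$, and via the $\Tt$-generator of $f^! \O_W$ — are mutually compatible, so that the $\bQ$-linear equivalences upgrade to honest equalities of divisors. This bookkeeping is exactly what is carried out in \cite{SchwedeTuckerTestIdealFiniteMaps} and \cite{SchwedeFAdjunction}, and I would invoke it rather than reproduce it.
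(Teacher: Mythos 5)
Your proposal is correct and takes essentially the same route as the paper: part (a) is exactly the rank-one duality argument behind the cited \cite[Proposition 5.4]{SchwedeTuckerTestIdealFiniteMaps}, part (b) is the same identification $\sHom_{\O_W}(f_*\O_Z,\O_W)\cong f_*\O_Z(K_Z-f^*K_W)$ the paper uses, and for part (c) you, like the paper, ultimately defer the divisor-level bookkeeping to \cite[Theorem 5.7]{SchwedeTuckerTestIdealFiniteMaps} (the paper handles non-effective $\Delta$ by the twisting trick of \autoref{subSec.MapsAndNonEffective}, which your local codimension-one check accomplishes equally well). One small correction to (b): the paper does not assume $\Tt(f_*\O_Z)\subseteq \O_W$, so $\Tt|_{f_*\O_Z}$ is in general only a \emph{rational} section of $\sHom(f_*\O_Z,\O_W)$ and $\mathcal{R}_{\Tt}$ need not be effective; the linear equivalence $\mathcal{R}_{\Tt}\sim K_{Z/W}$ that you actually need survives verbatim, but you should drop the effectivity claim.
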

\begin{proof}
Part (a) is simply \cite[Proposition 5.4]{SchwedeTuckerTestIdealFiniteMaps}.

Part (b) follows from viewing the restriction $\Tt|_{f_* \O_Z}$ as a rational section of the sheaf \mbox{$\sHom(f_* \O_Z, \O_W) \cong f_* \O_Z(K_Z - f^* K_W)$}.
In the case that all divisors $\Delta_W$ and $\Delta_Z$ are effective, part (c) is simply \cite[Theorem 5.7]{SchwedeTuckerTestIdealFiniteMaps}.  But by the sort of twistings applied in \autoref{subSec.MapsAndNonEffective}, one easily reduces to the effective case and the result follows.
\end{proof}

With notation as above, suppose that $\Delta_W$ is effective, or in other words that we are given a map $\phi_W : F^e_* \sL_W \to \O_W$.  We immediately see that $\Delta_Z = f^* \Delta_W - \mathcal{R}_{\Tt}$ is not necessarily effective (even when $f$ is separable, $\Tt$ is the trace map and so $\mathcal{R}_{\Tt}$ is the ramification divisor).  Now consider
\[
\phi_Z : F^e_* \sL_Z = F^e_* \O_Z( (1-p^e)(K_Z + f^* \Delta_W - \mathcal{R}_{\Tt})) \to K(Z),
\]
and notice that if it so happens that $(1-p^e)\mathcal{R}_{\Tt}$ is Cartier, then we can always restrict $\phi_Z$ to $F^e_* \sL_Z ( (1-p^e) \mathcal{R}_{\Tt}) = F^e_* \O_Z((1-p^e)(K_Z + f^* \Delta_W ))$ and obtain a map corresponding to $f^* \Delta_W$,
\[
\phi_Z : F^e_* \O_Z( (1-p^e)(K_Z + f^* \Delta_W)) \to \O_Z.
\]
Observe that the image is contained in $\O_Z$ since $f^* \Delta_W$ is an effective divisor.  Now for simplicity suppose that $\sL_W = \O_W((1-p^e)(K_W + \Delta_W)) = \O_W$.  Then we obtain $F^e_* \O_Z( (1-p^e) \mathcal{R}_{\Tt}) \to \O_Z$.

We apply this in the following setting.

\begin{proposition}
\label{prop.keyBaseExtensionsPMapsForFamilies}
Suppose that $\pi : X \to C$ is a finite type flat family with geometrically integral generic fiber and $X$ normal over a regular integral $F$-finite $1$-dimensional scheme $C$ with $\pi_* \O_X = \O_C$.  Suppose that $g : D \to C$ is a finite map from a regular integral scheme, $Y$ is the normalization of the component of $X \times_C D$ dominating $D$  and $\pi_Y : Y \to D$ the induced flat family.  Choose a map $\Tt : g_* K(D) \to K(C)$.

\begin{itemize}
\item[(a)]  Then there is an induced map $\Tt_{Y/X} : K(Y) \to K(X)$ extending $\Tt$.  Furthermore if $Y = X \times_C D$ then $\mathcal{R}_{\Tt_{Y/X}} = \pi_Y^* \mathcal{R}_{\Tt}$.  Finally, if $K(C) \subseteq K(D)$ is separable and $\Tt = \Tr_{D/C}$, then $\Tt_{Y/X} = \Tr_{Y/X}$.
\end{itemize}

Further suppose that a nonzero $\phi_C : F^e_* \O_C \to \O_C$ extends to a map $\phi_X : F^e_* \O_X \to K(X)$, that $K(C) \subseteq K(D)$ is separable and that $\Tt = \Tr_{D/C}$.

\begin{itemize}
\item[(b)] Then the $\Tt$-transpose of $\phi_C$,  $\phi_D : F^e_* \O_D((1-p^e)\mathcal{R}_{\Tt}) \to \O_D$ extends to a map $\phi_Y : F^e_* \O_Y( (1-p^e) \mathcal{R}_{\Tt_{Y/X}}) \to K(Y)$.
\end{itemize}
\end{proposition}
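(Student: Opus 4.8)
The plan is to settle part (a) by linear algebra at the generic point together with flat base change, and then to produce $\phi_Y$ in part (b) as the $\Tt_{Y/X}$-transpose of $\phi_X$, the real content being that this transpose is compatible with the flat family structure $\pi_Y\colon Y\to D$.

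For part (a): since the generic fiber of $\pi$ is geometrically integral over $K(C)$ and $K(C)\subseteq K(D)$ is finite, $K(X)\otimes_{K(C)}K(D)$ is a field, so $X\times_C D$ is integral with function field $K(X)\otimes_{K(C)}K(D)$; thus its normalization $Y$ satisfies $K(Y)=K(X)\otimes_{K(C)}K(D)$. Set $\Tt_{Y/X}:=\id_{K(X)}\otimes_{K(C)}\Tt\colon K(Y)\to K(X)$; it is $K(X)$-linear and, $\Tt$ being $K(C)$-valued, restricts to $\Tt$ on $1\otimes K(D)=K(D)\subseteq K(Y)$, so it extends $\Tt$ and determines $\mathcal{R}_{\Tt_{Y/X}}$ on $Y$. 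If $K(C)\subseteq K(D)$ is separable then $K(Y)/K(X)$ is separable of the same degree and $\Tr_{K(Y)/K(X)}(x\otimes b)=x\,\Tr_{K(D)/K(C)}(b)=(\id_{K(X)}\otimes\Tr_{D/C})(x\otimes b)$, so $\Tt_{Y/X}=\Tr_{Y/X}$ when $\Tt=\Tr_{D/C}$. For $\mathcal{R}_{\Tt_{Y/X}}=\pi_Y^*\mathcal{R}_{\Tt}$ when $Y=X\times_C D$ (so $X\times_C D$ is normal): by the $\Tt$-transpose lemma, $\mathcal{R}_{\Tt}$ is the divisor on $D$ of $\Tt|_{g_*\O_D}$ viewed via $\sHom_{\O_C}(g_*\O_D,\O_C)\cong g_*\O_D(K_D-g^*K_C)$ as a rational section of $\O_D(K_D-g^*K_C)$; since $\pi$ is flat and $g_*\O_D$ coherent, $\pi^*$ carries $g_*\O_D$ to $h_*\O_Y$ (with $h\colon Y\to X$ the base change of $g$) and carries this $\sHom$ to $\sHom_{\O_X}(h_*\O_Y,\O_X)$, compatibly with the trace-type maps; as $\pi_Y$ is flat, the divisor on $Y$ of the pulled-back section $\Tt_{Y/X}|_{h_*\O_Y}$ is $\pi_Y^*$ of the divisor on $D$ of $\Tt|_{g_*\O_D}$, which is the assertion.

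For part (b): $h\colon Y\to X$ is finite and dominant between normal $F$-finite integral schemes and $\Tt_{Y/X}$ is a nonzero $\O_X$-linear map $h_*K(Y)\to K(X)$, so the $\Tt$-transpose lemma of \autoref{subsec.pMapsAndBaseExtension} applies (with $W=X$, $Z=Y$, $f=h$) to $\phi_X$, producing the $\Tt_{Y/X}$-transpose $\phi_Y\colon F^e_*K(Y)\to K(Y)$, characterized uniquely by $\Tt_{Y/X}\circ h_*\phi_Y=\phi_X\circ\Tt_{Y/X}$, with associated divisor $\Delta_Y=h^*\Delta_{\phi_X}-\mathcal{R}_{\Tt_{Y/X}}$. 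Since $\phi_X$ has target $K(X)$ we have $\sL_X=\O_X$, hence (using $K_Y=h^*K_X+\mathcal{R}_{\Tt_{Y/X}}$) $\sL_Y=\O_Y\big((1-p^e)(K_Y+\Delta_Y)\big)=h^*\O_X\big((1-p^e)(K_X+\Delta_{\phi_X})\big)=h^*\O_X=\O_Y$. As $\Tt_{Y/X}=\Tr_{Y/X}$ is separable by part (a), $\mathcal{R}_{\Tt_{Y/X}}$ is the usual effective ramification divisor, so $\O_Y\big((1-p^e)\mathcal{R}_{\Tt_{Y/X}}\big)\subseteq\O_Y$ and restricting $\phi_Y$ to this subsheaf gives a map $F^e_*\O_Y\big((1-p^e)\mathcal{R}_{\Tt_{Y/X}}\big)\to K(Y)$ exactly as in the statement; it has values in $K(Y)$ rather than $\O_Y$ precisely because $h^*\Delta_{\phi_X}$ need not be effective, mirroring the passage from $\phi_C$ to $\phi_D$ discussed in \autoref{subsec.pMapsAndBaseExtension}.

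It remains to check that $\phi_Y$ extends $\phi_D$ relative to $\pi_Y$ in the sense in which $\phi_X$ extends $\phi_C$ relative to $\pi$. Because $\pi_*\O_X=\O_C$ and $\sL_X=\pi^*\sL_C=\O_X$, ``$\phi_X$ extends $\phi_C$'' is equivalent to $\pi_*\phi_X=\phi_C$, and what must be shown is the corresponding $(\pi_Y)_*\phi_Y=\phi_D$. Applying $(\pi_Y)_*$ to $\Tt_{Y/X}\circ h_*\phi_Y=\phi_X\circ\Tt_{Y/X}$ and using $\pi\circ h=g\circ\pi_Y$, the flat-base-change identifications from part (a), and $\pi_*\phi_X=\phi_C$, one finds that $(\pi_Y)_*\phi_Y$ satisfies the relation characterizing the $\Tt$-transpose of $\phi_C$; the uniqueness clause of the $\Tt$-transpose lemma then gives $(\pi_Y)_*\phi_Y=\phi_D$ (equivalently, at the generic point of $D$ the map $\phi_Y$ restricts to $\phi_D$ on functions pulled back from $D$). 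I expect the crux to be exactly this commutation of ``transpose along the finite map'' with ``push forward along the flat family'': one must chase the $\sHom$- and relative-dualizing-sheaf identifications around the cartesian square, verify that they intertwine $\Tt$ and $\Tt_{Y/X}$ so that transposing then pushing forward agrees with pushing forward then transposing, and, when $Y\neq X\times_C D$, account correctly for the normalization $Y\to X\times_C D$. Everything else is bookkeeping with the dictionary between $p^{-e}$-linear maps and $\bQ$-divisors recalled in \autoref{subsec.MapsAndQDivisors} and \autoref{subSec.MapsAndNonEffective}.
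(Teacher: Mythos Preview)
Your treatment of part (a) is essentially the paper's: the same tensor construction $\Tt_{Y/X}=\id_{K(X)}\otimes_{K(C)}\Tt$, the same basis check for $\Tr_{Y/X}$, and the same rational-section argument for $\mathcal{R}_{\Tt_{Y/X}}=\pi_Y^*\mathcal{R}_{\Tt}$ via flat base change of $\sHom_{\O_C}(g_*\O_D,\O_C)$.

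For part (b) you set things up just as the paper does---define $\phi_Y$ as the $\Tt_{Y/X}$-transpose of $\phi_X$---but you diverge when establishing that $\phi_Y$ extends $\phi_D$. You propose to show $(\pi_Y)_*\phi_Y=\phi_D$ by pushing the transpose relation down and invoking uniqueness, and you correctly flag this commutation as the crux. The difficulty is that your route requires knowing \emph{a priori} that $\phi_Y(F^e_*K(D))\subseteq K(D)$ before uniqueness can even be applied; restricting $\Tt_{Y/X}\circ\phi_Y=\phi_X\circ F^e_*\Tt_{Y/X}$ to $F^e_*K(D)$ only yields $\Tt_{Y/X}\big(\phi_Y(F^e_*K(D))\big)\subseteq K(C)$, and a single $K(X)$-linear functional landing in $K(C)$ does not force its argument into $K(D)\subseteq K(Y)$. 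So the diagram chase you describe does not close on its own.

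The paper bypasses this entirely with a one-line observation: because $\Tt=\Tr_{D/C}$ is the field trace, the transpose has the explicit description $\phi_D=\phi_C\otimes_{K(C)}K(D)$ and $\phi_Y=\phi_X\otimes_{K(X)}K(Y)=\phi_X\otimes_{K(C)}K(D)$ (citing \cite[Lemma 3.3, Proposition 4.1]{SchwedeTuckerTestIdealFiniteMaps}). Since $\phi_X$ extends $\phi_C$, tensoring with $K(D)$ immediately shows $\phi_Y$ extends $\phi_D$; in particular the containment $\phi_Y(F^e_*K(D))\subseteq K(D)$ falls out of the separability identity $K(D)^{1/p^e}=K(D)\cdot K(C)^{1/p^e}$. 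This explicit tensor description is exactly the missing ingredient that makes your ``crux'' evaporate, and it is why the paper needs the separability/trace hypothesis in (b).
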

\begin{proof}
We begin by analyzing this at the level of function fields.

We have $K(C) \subseteq K(X)$ a geometrically integral (and hence purely transcendental) finitely generated extension of fields.  Consider the map $\Tt : K(D) \to K(C)$.  We tensor with $K(X)$ and obtain:
\[
\xymatrix{
K(D) \otimes_{K(C)} K(X) \ar@{=}[d] \ar[r] & K(C) \otimes_{K(C)} K(X) \ar[d] \\
K(Y) \ar[r]_{\Tt_{Y/X}} & K(X)
}
\]
defining the map $\Tt_{Y/X}$.  It clearly extends $\Tt$ proving the first part of (a).  In the case that $K(C) \subseteq K(D)$ is separable and $\Tt = \Tr_{D/C}$, we notice that the basis $\{\ldots, b_i \ldots\}$ for $K(D)$ over $K(C)$ also yields a basis for $K(Y)$ over $K(X)$.  Then for any element $x \in K(X)$ and $d \in K(D)$, $xd$ acts on the basis first by $d$ and then by $x$.  It follows immediately that $\Tr_{Y/X}(xd) = x\Tr_{Y/X}(d) = x \Tt_{Y/X}(d) = \Tt_{Y/X}(xd)$ and so by linearity, $\Tt_{Y/X} = \Tr_{Y/X}$.  This proves the second part of (a).

We now turn to the statement that $\mathcal{R}_{\Tt_{Y/X}} = \pi_Y^* \mathcal{R}_{\Tt}$ from (a).  Viewing $\Tt$ as a rational section of $\sHom_{\O_C}(g_* \O_D, \O_C)$ we see that $\Tt_{Y/X}$ is obtained as the pullback of that same rational section to $\sHom_{\O_X}((g_Y)_* (\O_X \otimes_{\O_C} \O_D), \O_X)$.  Because we assumed that $Y = X \times_C D$, $\O_Y = \O_X \otimes_{\O_C} \O_D$ and we are done (please forgive the slight abuse of notation where we left off some necessary pushforwards and sheafy inverse images).  This finishes the proof of (a).

Now we handle the statement in (b) involving the map $\phi_C : F^e_* \O_C \to \O_C$ and its extension $\phi_X : F^e_* \O_X \to K(X)$.  We take the $\Tt$-transpose $\phi_D$ of $\phi_C$ and the $\Tt_{Y/X}$-transpose $\phi_Y$ of $\phi_X$.  We need only to verify that $\phi_Y$ extends $\phi_D$ at the level of field of fractions.  Since $\Tt = \Tr_{D/C}$ is the field trace, by \cite[Lemma 3.3, Proposition 4.1]{SchwedeTuckerTestIdealFiniteMaps} we see that $\phi_D = \phi_C \otimes_{K(C)} K(D)$ and $\phi_Y = \phi_X \otimes_{K(X)} K(Y) = \phi_X \otimes_{K(C)} K(D)$. It follows immediately that $\phi_Y$ extends $\phi_D$ since $\phi_X$ extends $\phi_C$.
\end{proof}

\begin{remark}
It would be natural to try to remove the hypothesis that $K(C)\subseteq K(D)$ is separable in (b) above.  We do not see how to do that however.
\end{remark}

\subsection{Singularities}

Suppose that $(X, \Delta)$ is a pair with $X$ an integral $F$-finite scheme, $\Delta$ a $\bQ$-divisor, and $K_X + \Delta$ $\bQ$-Cartier (we do not assume that $\Delta \geq 0$).

\begin{definition}[Log canonical singularities]
We say that $(X, \Delta)$ is \emph{sub-log canonical} if for any proper birational map $\pi : Y \to X$ from a normal scheme $Y$, the coefficients of $K_Y - \pi^*(K_X + \Delta)$ are all $\geq -1$.  We say that $(X, \Delta)$ is \emph{log canonical} if it is sub-log canonical and $\Delta \geq 0$.
\end{definition}

\begin{definition}[$F$-pure singularities]
\label{def.FPure}
Suppose further that $p$ does not divide the index of $K_X + \Delta$.  We say that $(X, \Delta)$ is \emph{sub-$F$-pure} if for all sufficiently divisible $e>0$ we have $\O_X \subseteq \Image\Big(\phi_{\Delta}^e : F^e_* \O_X(  (1-p^e)(K_X + \Delta)) \to K(X)\Big)$.  Here $\phi^e_{\Delta}$ is the map corresponding to $\Delta$ as in \autoref{subsec.MapsAndQDivisors}.
We say that $(X, \Delta)$ is \emph{$F$-pure} if it is sub-$F$-pure and $\Delta \geq 0$.  In this case $\phi_{\Delta}^e : F^e_* \O_X(  (1-p^e)(K_X + \Delta)) \twoheadrightarrow \O_X$ surjects.

More generally, if $(X, \Delta)$ is $F$-pure and $I \subseteq \O_X$ is an ideal sheaf and $t \geq 0$ is any real number, then we say that $(X, \Delta, I^t)$ is \emph{sharply $F$-pure} if
\[
\phi_{\Delta}^e : F^e_* I^{\lceil t(p^e-1)\rceil}\cdot \O_X(  (1-p^e)(K_X + \Delta)) \twoheadrightarrow \O_X
\]
surjects for any (equivalently all sufficiently divisible \cf the argument of \autoref{lem.OneSplitImpliesMore} below) $e > 0$.  In this setting, the \emph{$F$-pure threshold of $(X, \Delta, I)$}, denoted $\fpt(X, \Delta, I)$, is defined to be
\[
\sup\big\{ c \geq 0\;|\; (X, \Delta, I^c) \text{ is sharply $F$-pure} \big\}.
\]
\end{definition}

\begin{remark}
In \cite{TakagiAdjointIdealsAndACorrespondence}, Takagi gave a slightly different definition of $F$-pure pairs for non-effective divisors $\Delta$.  We believe that these two notions are indeed distinct, but we are unsure which is better in general.  The notion we work with in this paper is harder to satisfy, but we do not know if it corresponds to sub-log canonicity even assuming the weak ordinarity conjecture \cite{MustataSrinivasOrdinary}.
\end{remark}

\begin{lemma}
\label{lem.OneSplitImpliesMore}
Using the notations of \autoref{def.FPure}, suppose that $$\O_X \subseteq \Image\Big(\phi_{\Delta}^e : F^e_* \O_X(  (1-p^e)(K_X + \Delta)) \to K(X)\Big)$$ for some $e>0$.  Then for all integers $n \geq 1$,
\[
\O_X \subseteq \Image\Big(\phi_{\Delta}^{ne} : F^{ne}_* \O_X(  (1-p^{ne})(K_X + \Delta)) \to K(X)\Big).
\]
\end{lemma}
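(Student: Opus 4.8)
The plan is to reduce the assertion to a stalk-local statement and then exhibit, by induction on $n$, an explicit local section that $\phi_\Delta^{ne}$ carries to $1$. First I would observe that the containment $\O_X \subseteq \Image\big(\phi_\Delta^e\big)$ is local on $X$: the image is an $\O_X$-submodule of the constant sheaf $K(X)$, so it contains $\O_X$ precisely when it locally contains the section $1$. Hence, shrinking $X$ to an affine open and fixing an embedding $\sL := \O_X((1-p^e)(K_X+\Delta)) \subseteq K(X)$ as in \autoref{subSec.MapsAndNonEffective}, the hypothesis supplies a local section $z \in \sL$ with $\phi_\Delta^e(F^e_* z) = 1$.

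Next I would invoke the self-composition picture of \autoref{subSec.MapsAndNonEffective} (see \autoref{eq.NoneffectiveMapComposition}): with the embedding $\sL \subseteq K(X)$ fixed, $\phi_\Delta^{ne} : F^{ne}_* \sL^{1+p^e+\cdots+p^{(n-1)e}} \to K(X)$ is the restriction of the $n$-fold composite $\phi_\Delta^e \circ (F^e_* \phi_\Delta^e) \circ \cdots \circ (F^{(n-1)e}_* \phi_\Delta^e)$ on $F^{ne}_* K(X)$, and this restriction again corresponds to the divisor $\Delta$ (this is the content of the lemma in \autoref{subSec.MapsAndNonEffective} asserting $\Delta_{\phi^{ne}} = \Delta_{\phi^e}$; compare \cite[Lemma 4.1.2]{BlickleSchwedeSurveyPMinusE} and \cite[Theorem 3.11]{SchwedeFAdjunction}). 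Thus it suffices to produce a local section of $\sL^{1+p^e+\cdots+p^{(n-1)e}}$ mapping to $1$, and I claim
\[
\phi_\Delta^{ne}\!\left(F^{ne}_*\, z^{\,1+p^e+\cdots+p^{(n-1)e}}\right) = 1,
\]
where $z^{1+p^e+\cdots+p^{(n-1)e}}$ is viewed as a local section of $\sL^{1+p^e+\cdots+p^{(n-1)e}}$.

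I would prove the claim by induction on $n$, the case $n=1$ being the hypothesis. For the inductive step, write $\phi_\Delta^{ne} = \phi_\Delta^{(n-1)e} \circ \big(F^{(n-1)e}_* \phi_\Delta^e\big)$ and use the $\O_X$-linearity of $\phi_\Delta^e$ in the guise $\phi_\Delta^e(F^e_*(a^{p^e} b)) = a\, \phi_\Delta^e(F^e_* b)$, together with the arithmetic identity $1 + p^e + \cdots + p^{(n-1)e} = 1 + p^e\big(1 + p^e + \cdots + p^{(n-2)e}\big)$: the inner map carries $F^e_* z^{1+p^e+\cdots+p^{(n-1)e}}$ to $z^{1+p^e+\cdots+p^{(n-2)e}} \cdot \phi_\Delta^e(F^e_* z) = z^{1+p^e+\cdots+p^{(n-2)e}}$, and applying the inductive hypothesis to $\phi_\Delta^{(n-1)e}$ gives $1$. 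Hence $\O_X \subseteq \Image\big(\phi_\Delta^{ne}\big)$ for every $n \geq 1$.

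I expect the only genuinely delicate point to be the twist bookkeeping hidden in the phrase ``restriction of the $n$-fold composite'': one needs that restricting the $K(X)$-level composite to the correct reflexive power $\sL^{1+p^e+\cdots+p^{(n-1)e}}$ returns exactly $\phi_\Delta^{ne}$, with associated divisor $\Delta$ rather than something strictly larger. This is already settled by the lemma referenced above (and the cited results of \cite{BlickleSchwedeSurveyPMinusE, SchwedeFAdjunction}), so once it is granted the remaining work is just the classical ``a composite of splittings is a splitting'' computation carried out on the explicit element $z^{1+p^e+\cdots+p^{(n-1)e}}$.
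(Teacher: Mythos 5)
Your proof is correct and takes essentially the same route as the paper: both induct on $n$ using the factorization of $\phi_{\Delta}^{ne}$ as a composite of $\phi_{\Delta}^e$'s, your explicit computation with $z^{1+p^e+\cdots+p^{(n-1)e}}$ being the element-wise version of the paper's twisting of sheaf images by $\O_X((1-p^e)(K_X+\Delta))$.
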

\begin{proof}
Since $\phi(F^e_* \O_X( (1-p^e)(K_X + \Delta))) \supseteq \O_X$, we see by twisting by appropriate line bundles, that $\phi(F^e_* \O_X( (1-p^{2e})(K_X + \Delta))) \supseteq \O_X( (1-p^e)(K_X + \Delta))$.  So
\[
\begin{array}{rl}
& \phi^2\big(F^{2e}_* \O_X( (1-p^{2e})(K_X + \Delta))\big) \\
= & \phi\big(F^e_* \phi(F^e_* \O_X( (1-p^{2e})(K_X + \Delta)))\big) \\
\supseteq & \phi\big(F^e_* \O_X( (1-p^e)(K_X + \Delta))\big) \\
\supseteq & \O_X.
\end{array}
\]
In general, if $\phi^n\big(F^{ne}_* \O_X( (1-p^{ne})(K_X + \Delta))\big) \supseteq \O_X$, then
\[
\begin{array}{rl}
& \phi^{n+1}\big(F^{(n+1)e}_* \O_X( (1-p^{(n+1)e})(K_X + \Delta))\big) \\
= & \phi\big(F^e_* \phi^n(F^{ne}_* \O_X( (1-p^{(n+1)e})(K_X + \Delta)))\big) \\
\supseteq & \phi\big(F^e_*\O_X( (1-p^e)(K_X + \Delta))\big) \\
\supseteq & \O_X.
\end{array}
\]
\end{proof}

Unlike the case where $\Delta$ is effective, it is not clear to us whether $(X, \Delta)$ being sub-$F$-pure implies that $\phi_{\Delta}^e$ has $\O_X$ in its image for \emph{all} $e$ such that $(p^e - 1)(K_X + \Delta)$ is $\bQ$-Cartier.  The problem is that $\phi^{2e}_{\Delta}$ doesn't factor through $\phi^e_{\Delta}$ when $\Delta$ is non-effective.  However, when $X$ is normal and $1$-dimensional, this is not an issue.

\begin{lemma}
Suppose that $X$ is normal and $1$-dimensional and $(X, \Delta)$ is sub-$F$-pure.  Then for every $e > 0$ such that $(p^e-1)(K_X + \Delta)$ is $\bQ$-Cartier, we have that $\O_X$ is in the image of $\phi_{\Delta}^e$.
\end{lemma}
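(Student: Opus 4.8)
The plan is to reduce to a completely explicit computation on a discrete valuation ring, where one sees that whether or not $\mathcal{O}_X$ lies in the image of $\phi^e_{\Delta}$ depends only on the coefficients of $\Delta$ and not on $e$. Both the hypotheses and the conclusion are local on $X$, and at the generic point $X$ is a field and there is nothing to prove, so I may assume $X = \Spec R$ with $(R,\mathfrak{m})$ an $F$-finite discrete valuation ring (a one-dimensional normal Noetherian local domain is a DVR), with uniformizer $t$. Fixing a generator of $\omega_R$ we may take $K_X = 0$, so $\Delta = a\mathfrak{m}$ for a single rational number $a$, the hypothesis that $(p^e-1)(K_X + \Delta)$ be $\bQ$-Cartier becomes the condition $(p^e - 1)a \in \bZ$, and $\mathcal{L}_e := \mathcal{O}_X((1-p^e)(K_X + \Delta))$ is, as a subsheaf of $K(X)$, the fractional ideal $t^{(p^e-1)a}R$. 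I also fix a generator $\Phi_e$ of $\Hom_R(F^e_* R, R) \cong F^e_* R$ and extend it to $\Phi_e \colon F^e_* K(X) \to K(X)$.

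Next I would record the identification $\phi^e_{\Delta} = \Phi_e|_{F^e_* \mathcal{L}_e}$. When $a \geq 0$ this is the standard description of the map attached to a divisor on a regular ring, see \cite[Section 4]{BlickleSchwedeSurveyPMinusE}. For arbitrary $a$, choose an integer $n \geq \max(0,-a)$ and put $\Delta' := \Delta + n\mathfrak{m} \geq 0$, so that $\mathcal{L}'_e := \mathcal{O}_X((1-p^e)(K_X + \Delta')) = t^{(p^e-1)n}\mathcal{L}_e \subseteq \mathcal{L}_e$ with $\mathcal{L}_e/\mathcal{L}'_e$ of finite length. By the effective case $\phi^e_{\Delta'} = \Phi_e|_{F^e_* \mathcal{L}'_e}$, and by the construction of $\Delta_{\phi}$ in \autoref{subSec.MapsAndNonEffective} (applied with $E = n\mathfrak{m}$) the map $\phi^e_{\Delta}$ also restricts to $\phi^e_{\Delta'}$ on $F^e_* \mathcal{L}'_e$. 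Since $K(X)$ is torsion-free, any $\mathcal{O}_X$-linear map $F^e_* \mathcal{L}_e \to K(X)$ vanishing on $F^e_* \mathcal{L}'_e$ factors through the torsion module $F^e_*(\mathcal{L}_e/\mathcal{L}'_e)$ and hence is zero; therefore $\phi^e_{\Delta} = \Phi_e|_{F^e_* \mathcal{L}_e}$.

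Then I would compute the image. Using a $p^e$-basis $t^0,\dots,t^{p^e-1}$ of $F^e_* R$ over $R$ (together with a $p^e$-basis of the residue field), the generator $\Phi_e$ may be taken to annihilate $F^e_*(t^s)$ for $0 \leq s \leq p^e - 2$ and to send $F^e_*(t^{p^e-1})$ to a unit; hence $\Phi_e(F^e_* t^{j'})$ lies in $\mathfrak{m}^{(j'-p^e+1)/p^e}$ precisely when $j' \equiv -1 \pmod{p^e}$ and vanishes otherwise, and minimizing over $j' \geq j$ gives $\Phi_e(F^e_*\mathfrak{m}^{j}) = \mathfrak{m}^{\lceil (j+1)/p^e\rceil - 1}$ for every $j \in \bZ$. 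Taking $j = (p^e-1)a$,
\[
\Image(\phi^e_{\Delta}) = \Phi_e(F^e_* \mathcal{L}_e) = \mathfrak{m}^{\lceil ((p^e-1)a + 1)/p^e\rceil - 1},
\]
so $\mathcal{O}_X \subseteq \Image(\phi^e_{\Delta})$ if and only if the exponent is $\leq 0$, i.e. if and only if $(p^e-1)a + 1 \leq p^e$, i.e. if and only if $a \leq 1$ --- independently of $e$. To finish: sub-$F$-purity supplies some $e_0$ with $(p^{e_0}-1)(K_X+\Delta)$ $\bQ$-Cartier and $\mathcal{O}_X \subseteq \Image(\phi^{e_0}_{\Delta})$, hence $a \leq 1$ at every closed point of $X$; then for every $e$ with $(p^e-1)(K_X+\Delta)$ $\bQ$-Cartier the same equivalence gives $\mathcal{O}_X \subseteq \Image(\phi^e_{\Delta})$, as desired.

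The step I expect to be the main obstacle is the identification $\phi^e_{\Delta} = \Phi_e|_{F^e_* \mathcal{L}_e}$ in the non-effective case: one must match the twist by $n\mathfrak{m}$ built into \autoref{def.NoneffectiveDelta} with the restriction of the standard map, and arrange the (only up to units) choices of $\Phi_e$, $\phi^e_{\Delta}$, the uniformizer and the generator of $\omega_R$ compatibly. Once that is pinned down the remainder is the short explicit calculation above. An alternative route that avoids $\Phi_e$ entirely --- directly identifying $\phi^e_{\Delta}(F^e_* \mathcal{L}_e)$ with a power of $\mathfrak{m}$ via the different of the map --- essentially repeats the same computation.
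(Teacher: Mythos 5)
Your proposal is correct and follows the same route as the paper: localize to the spectrum of a DVR, write $\Delta = aQ$, and observe that $\O_X$ lies in the image of $\phi^e_{\Delta}$ if and only if $a \leq 1$, a condition independent of $e$. The paper leaves exactly this DVR computation as an exercise to the reader, and your explicit calculation (identifying $\phi^e_\Delta$ with the restriction of the generating map and computing its image as a power of $\frm$) is a valid way to carry it out.
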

\begin{proof}
The statement is obviously local so we may assume that $X$ is the spectrum of a DVR and $Q$ is the unique closed point of $X$.  Write $\Delta = \lambda Q$.  We leave it as an exercise to the reader to check that $\phi_{\Delta}^e$ has $\O_X$ in its image if and only if $\lambda \leq 1$.
\end{proof}

We now explain briefly the relation between $F$-pure and log canonical singularities through several lemmas and corollaries.

\begin{lemma}
\label{lem.BirationalBehaviorOfMaps}
Suppose we are given $\phi : F^e_* \sL \to K(X)$ corresponding to a divisor $\Delta_{\phi}$.  Fix $K_X$ so that $\sL = \O_X( (1-p^e)(K_X + \Delta)) \subseteq K(X)$.  We denote by $\phi : F^e_* K(X) \to K(X)$ the extension to $F^e_* K(X)$ of $\phi$.

Consider a divisorial discrete valuation ring $\O_{X,E} \subseteq K(X)$ centered over $X$.  Then the map
\[
F^e_* \O_X( (1-p^e)(K_X + \Delta)) \otimes \O_{X,E} \to K(X)
\]
corresponds to the divisor $\Delta_{X,E}$ such that $K_{X,E} + \Delta_{X,E} = \pi^*(K_X + \Delta)$.  Here $K_{X,E}$ is the canonical divisor on $\Spec \O_{X,E}$ coming from the uniquely determined ${\bf b}$-divisor we selected implicitly when we fixed $K_X$.
\end{lemma}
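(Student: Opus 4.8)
The plan is to reduce the assertion to a computation on the discrete valuation ring $R := \O_{X,E}$ and to match it against the map-to-divisor dictionary of \autoref{subsec.MapsAndQDivisors} and \autoref{subSec.MapsAndNonEffective}. Since the statement is local on $X$, I would first fix a normal proper birational model $\pi : Y \to X$ on which $E$ appears as a prime divisor, with generic point $\eta_E$, so that $R = \O_{X,E} = \O_{Y,\eta_E}$ is a DVR with $\Frac(R) = K(Y) = K(X)$ and $\Spec R \to Y \to X$ realizes $R$ as a localization of $\O_Y$. Because the embedding $\sL \subseteq K(X)$ is fixed, the extension $\bar\phi : F^e_* K(X) \to K(X)$ of $\phi$ is an intrinsic object shared by $X$ and $Y$; restricting it to $F^e_* \pi^*\sL$ and to $F^e_*(\sL \otimes_{\O_X} R)$ yields maps $\phi_Y$ and $\psi$, and $\psi$ is simply $\phi_Y$ localized at $\eta_E$ (the map named in the statement).

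The heart of the argument is to show that $\phi_Y$ corresponds to the divisor $\Delta_Y$ defined by $K_Y + \Delta_Y = \pi^*(K_X + \Delta)$, where $K_Y$ is the trace of the fixed canonical ${\bf b}$-divisor; the lemma then follows by localizing at $\eta_E$, since the correspondence commutes with the flat localization $\O_Y \to R$ (it is determined at codimension-one points, and $\eta_E$ is one such on $Y$), $K_{X,E} = K_Y|_{\Spec R}$, and hence $\psi$ corresponds to $\Delta_Y|_{\Spec R}$, which is exactly $\Delta_{X,E}$. To identify $\phi_Y$, the essential point is that $(1-p^e)(K_X + \Delta)$ is an honest Cartier divisor (its associated reflexive sheaf $\sL$ is invertible), so $\pi^*\sL = \O_Y\big(\pi^*((1-p^e)(K_X+\Delta))\big) = \O_Y\big((1-p^e)(K_Y+\Delta_Y)\big)$ as subsheaves of $K(Y)$, and $\pi^*$ sends principal divisors to principal divisors. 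Running the recipe of \autoref{def.NoneffectiveDelta} for $\phi_Y$ — choose an effective Cartier $E_0$ with $\phi_Y(F^e_*(\pi^*\sL)((1-p^e)E_0)) \subseteq \O_Y$, extract an effective $\Delta''$ from the resulting map into $\O_Y$ as in \autoref{subsec.MapsAndQDivisors}, and set $\Delta_{\phi_Y} := \Delta'' - E_0$ — one checks $\O_Y\big((1-p^e)(K_Y + \Delta_{\phi_Y})\big) = \pi^*\sL$; combined with the displayed identity and the fact that $K_Y$ is pinned down, this forces $\Delta_{\phi_Y} = \Delta_Y$. In practice I would carry out this computation directly on $\Spec R$, where every line bundle is trivial and the whole bookkeeping collapses to tracking one valuation.

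The step I expect to be the main obstacle is exactly this compatibility of the map-to-divisor correspondence with birational pullback, and the subtlety is twofold. First, $\phi$ need not surject onto $\O_X$, and $\Delta_Y$ need not be effective — this occurs precisely when $E$ has positive discrepancy over $(X, \Delta)$ — so one must genuinely pass through the twisting procedure of \autoref{subSec.MapsAndNonEffective} rather than invoke the effective correspondence directly. Second, one must keep the fixed choice of $K_X$, equivalently of $K_{X,E}$ via the canonical ${\bf b}$-divisor, compatible with all the pullbacks, so that the identifications above are literal equalities of subsheaves of $K(X)$ rather than mere isomorphisms. Both are handled by the single observation that every divisor pulled back in the course of the argument — $(1-p^e)(K_X+\Delta)$, the auxiliary twist $(1-p^e)E_0$, and the principal divisors produced by the recipe — is Cartier, hence has an unambiguous pullback; granting that, the remaining verification on the DVR $R$ is routine.
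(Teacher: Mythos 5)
Your proposal is correct and follows essentially the same route as the paper's own (sketched) proof: pass to a normal model $Y$ on which $E$ is a prime divisor, restrict the extension of $\phi$ to $F^e_* \pi^*\sL$, identify the resulting divisor $\Delta_Y$ with the log pullback via the equality $\O_Y((1-p^e)(K_Y+\Delta_Y)) = \pi^*\sL$ inside $K(Y)$, and then localize at the generic point of $E$. The extra care you take with the non-effective twisting of \autoref{subSec.MapsAndNonEffective} and with Cartier pullbacks just fills in details the paper leaves to the cited references.
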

\begin{proof}
This is essentially in \cite[Main Theorem]{HaraWatanabeFRegFPure} using different language, other proofs can be found in \cite[Section 7.2]{BlickleSchwedeSurveyPMinusE}.  We briefly sketch the argument.

Choose $\pi : Y \to X$ a proper birational map from a normal $Y$ such that $\O_{X,E}$ appears as the generic point of some divisor $E \subseteq Y$.  We first extend $\phi$ to $F^e_* K(X) = F^e_* K(Y)$, and then consider $\phi_Y = \phi|_{F^e_* \pi^*(\O_X( (1-p^e)(K_X + \Delta)))} : F^e_* \pi^*(\O_X( (1-p^e)(K_X + \Delta))) \to \pi^* K(X) = K(Y)$.  This map $\phi_Y$ gives us a divisor $\Delta_Y$ such that
\[
\O_Y((1-p^e)(K_Y + \Delta_Y)) = \pi^*(\O_X( (1-p^e)(K_X + \Delta)))
\]
 and hence $K_Y + \Delta_Y = \pi^*(K_X + \Delta)$.  However, the divisor $\Delta_Y$ induced from $\phi_Y$ obviously agrees with $\Delta_X$ wherever $\pi$ is an isomorphism.  The result follows by localization.
\end{proof}

We next verify that sub-log canonical and sub-$F$-pure are the same for valuation rings.

\begin{lemma}
Suppose that $R \subseteq K(X)$ is a discrete valuation ring with parameter $t \in R$.  $R \cong L \subseteq K(X)$ and $\phi : F^e_* L \to K(X)$ is any nonzero map inducing a divisor $\Delta_{\phi}$ on $\Spec R$, then $(\Spec R, \Delta_{\phi})$ is sub-log canonical if and only if it is sub-$F$-pure.
\end{lemma}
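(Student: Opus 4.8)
The plan is to reduce the statement to an elementary computation on the spectrum of a DVR, mirroring the argument already sketched in the previous lemma. Write $R$ with parameter $t$ and residue field $k$. Since $\phi : F^e_* L \to K(X)$ induces $\Delta_\phi$ on $\Spec R$, and $\Spec R$ has a unique closed point $Q$, we may write $\Delta_\phi = \lambda Q$ for some $\lambda \in \bQ$ (not necessarily $\geq 0$). The key observation is that both conditions — sub-log canonical and sub-$F$-pure — will turn out to be equivalent to the single inequality $\lambda \leq 1$, so the lemma follows by comparing the two characterizations.

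\emph{First I would} handle the log canonical side. Any proper birational map $Y \to \Spec R$ from a normal scheme is, after localizing at the relevant codimension-one points, again a DVR dominating $R$; such an extension is of the form $t \mapsto (\text{unit})\cdot s^m$ for the parameter $s$ of the target and some $m \geq 1$. A direct discrepancy computation (using that we have fixed the canonical $\mathbf{b}$-divisor, as in the Convention and in \autoref{lem.BirationalBehaviorOfMaps}) shows that the coefficient of the exceptional divisor in $K_Y - \pi^*(K_X+\Delta)$ equals $m(1-\lambda) - 1$, which is $\geq -1$ for all $m \geq 1$ precisely when $\lambda \leq 1$. Hence $(\Spec R, \Delta_\phi)$ is sub-log canonical if and only if $\lambda \leq 1$.

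\emph{Next I would} handle the $F$-pure side. Using \autoref{subsec.MapsAndQDivisors}, the map $\phi$ may be normalized (after pre-multiplying by a unit) so that, writing things out on $\Spec R$, $\O_X$ lies in the image of $\phi_\Delta^e$ if and only if the divisor condition $\lceil (p^e-1)\lambda \rceil \leq p^e - 1$ holds, i.e.\ $\lambda \leq 1$; more precisely one checks directly, as in the exercise left to the reader in the one-dimensional lemma above, that $\phi_\Delta^e$ has $\O_X$ in its image exactly when $\lambda \leq 1$, and when $\lambda > 1$ the image is contained in $t^{j}R$ for some $j \geq 1$ and stays there under self-composition by \autoref{lem.OneSplitImpliesMore}. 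Since sub-$F$-pure requires $\O_X$ in the image of $\phi_\Delta^e$ for all sufficiently divisible $e$, and on a one-dimensional normal scheme this holds for one $e$ iff it holds for all admissible $e$ (by the lemma immediately preceding this one), we conclude that $(\Spec R, \Delta_\phi)$ is sub-$F$-pure if and only if $\lambda \leq 1$. Combining the two characterizations gives the equivalence.

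\emph{The main obstacle} I anticipate is bookkeeping rather than conceptual: one must be careful that the identification of $L$ with $R$ as a subsheaf of $K(X)$, the choice of $K_X$ in the fixed $\mathbf{b}$-divisor, and the definition of $\Delta_\phi$ for possibly non-effective $\Delta$ (\autoref{def.NoneffectiveDelta}) are all mutually compatible, so that the coefficient $\lambda$ appearing in the discrepancy formula is literally the same $\lambda$ controlling the image of $\phi_\Delta^e$. Once the normalizations are pinned down, both sides reduce to the same inequality $\lambda \le 1$ and there is nothing further to prove; the cautious route is to first dispose of the effective case ($\lambda \geq 0$) where \autoref{def.FPure} applies verbatim, and then pass to arbitrary $\lambda$ by the twisting trick of \autoref{subSec.MapsAndNonEffective}.
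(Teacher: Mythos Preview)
Your proposal is correct and follows the same route as the paper: write $\Delta_\phi = \lambda Q$ and show that both conditions are equivalent to $\lambda \leq 1$.  One small wrinkle on the log canonical side: for $R$ a DVR, every proper birational map $Y \to \Spec R$ from a normal scheme is an isomorphism (there is nothing to blow up on a regular one-dimensional scheme), so only $m=1$ actually occurs in your discrepancy formula; the paper accordingly just asserts that sub-log canonical on a regular curve means $\Delta_\phi \leq 1\cdot\Div(t)$.  For the $F$-pure side the paper is slightly more explicit than your sketch: it takes a generator $\Phi$ of $\Hom_R(F^e_* L, R)$ as an $F^e_* R$-module, writes $\phi(F^e_*\,\blank) = \Phi(F^e_* u t^a\,\blank)$ for some unit $u$ and integer $a$, so that $\Delta_\phi = \tfrac{a}{p^e-1}\Div(t)$, and then observes directly that $\O_X \subseteq \Image(\phi)$ iff $a \leq p^e-1$, i.e.\ iff $\lambda \leq 1$.
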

\begin{proof}
To say that $(\Spec R, \Delta_{\phi})$ is sub-log canonical is simply to assert that $\Delta_{\phi} \leq 1\cdot \Div(t)$.  Now, if $\Phi : F^e_* L \to R$ generates $\Hom_R(F^e_* L, R)$ as an $F^e_* R$-module, then $\phi(F^e_* \blank) = \Phi(F^e_* u t^{a} \blank)$ for some unit $u \in R$ and some $a \in \bZ$.  It follows that $\Delta_{\phi} = {a \over p^e - 1} \Div(t)$.  On the other hand, it is easy to see that $\Image(\phi) \supseteq \O_X$ if and only if $a \leq p^e - 1$.  This proves the lemma.
\end{proof}

Combining the previous two results, we immediately have the following.  This was first shown in \cite[Main Theorem]{HaraWatanabeFRegFPure} although they assumed that $\Delta$ is effective.
\begin{corollary}
\label{cor.SubFPureImpliesSubLC}
If $(X, \Delta)$ is a pair with $(p^e - 1)(K_X + \Delta)$ $\bQ$-Cartier, and $(X, \Delta)$ is sub-$F$-pure, then $(X, \Delta)$ is sub-log canonical.
\end{corollary}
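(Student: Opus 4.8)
The proof is essentially a matter of reduction to the valuative case already handled by the two preceding lemmas. The plan is as follows.

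\medskip

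\noindent\textbf{Step 1: Reduce to checking discrepancies along divisorial valuations.} By definition, $(X, \Delta)$ being sub-log canonical means that for every proper birational $\pi : Y \to X$ the coefficients of $K_Y - \pi^*(K_X + \Delta)$ are $\geq -1$. It suffices to check this for a single prime divisor $E$ on a single such $Y$ at a time, i.e., to show that for every divisorial discrete valuation ring $\O_{X,E} \subseteq K(X)$ centered over $X$, the discrepancy of $E$ with respect to $(X, \Delta)$ is $\geq -1$. Fix $K_X$ as in the statement so that $\sL = \O_X((1-p^e)(K_X+\Delta)) \subseteq K(X)$, and extend $\phi_\Delta^e$ to a map $F^e_* K(X) \to K(X)$.

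\medskip

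\noindent\textbf{Step 2: Transfer the sub-$F$-purity to $\O_{X,E}$.} Apply \autoref{lem.BirationalBehaviorOfMaps}: the localized/extended map
\[
F^e_* \O_X((1-p^e)(K_X+\Delta)) \otimes \O_{X,E} \to K(X)
\]
corresponds to the divisor $\Delta_{X,E}$ on $\Spec \O_{X,E}$ satisfying $K_{X,E} + \Delta_{X,E} = \pi^*(K_X+\Delta)$, where $K_{X,E}$ comes from the implicitly fixed $\mathbf b$-divisor. Here $R := \O_{X,E}$ is a DVR with, say, parameter $t$, and $\Delta_{X,E} = \Delta_\phi$ for the restricted map $\phi$. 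The key point I need is that the hypothesis $\O_X \subseteq \Image(\phi_\Delta^e)$ as a map into $K(X)$ passes to the inclusion $\O_{X,E} \subseteq \Image(\phi)$ of the localized map; this is immediate because localization is exact and the image of the localized map is just the localization of the image, so $\O_{X,E} = (\O_X)_{\O_{X,E}} \subseteq \Image(\phi)$.

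\medskip

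\noindent\textbf{Step 3: Apply the valuative equivalence.} Now $(\Spec R, \Delta_\phi)$ with $\Delta_\phi = \Delta_{X,E}$ has $\O_{X,E}$ in the image of its defining map, hence is sub-$F$-pure in the sense of \autoref{def.FPure} (for the given $e$, which is enough on a DVR), so by the lemma immediately preceding this corollary it is sub-log canonical, i.e., $\Delta_{X,E} \leq 1\cdot\Div(t)$. Translating: the coefficient of $E$ in $\Delta_{X,E}$ is $\leq 1$, equivalently the discrepancy $a(E; X, \Delta) = -\mathrm{coeff}_E(\Delta_{X,E}) \geq -1$. Since $E$ was an arbitrary divisor over $X$, this shows $(X,\Delta)$ is sub-log canonical.

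\medskip

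\noindent\textbf{Main obstacle.} There is no serious obstacle — the two preceding lemmas do all the real work, and the only thing to be careful about is the bookkeeping in Step 2, namely that the divisor attached to the localized map $F^e_* \sL \otimes \O_{X,E} \to K(X)$ is genuinely the discrepancy divisor $\Delta_{X,E}$ (this is exactly the content of \autoref{lem.BirationalBehaviorOfMaps}, including the compatibility of the canonical $\mathbf b$-divisor choices), and that sub-$F$-purity localizes. One should also note that the hypothesis only assumes $(p^e-1)(K_X+\Delta)$ is $\bQ$-Cartier for the particular $e$ appearing in the sub-$F$-purity condition, which is exactly what is needed to run \autoref{lem.BirationalBehaviorOfMaps} and to invoke sub-$F$-purity on the DVR with that same $e$; no divisibility of $e$ or passage to multiples is required here.
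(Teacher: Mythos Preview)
Your proof is correct and follows essentially the same approach as the paper's own proof: localize at an arbitrary divisorial valuation ring $\O_{X,E}$, observe that $1 \in \Image(\phi)$ persists after localization so that $(\Spec \O_{X,E}, \Delta_{X,E})$ is sub-$F$-pure, and then invoke the two preceding lemmas to conclude the discrepancy along $E$ is $\geq -1$. Your write-up is simply a more carefully annotated version of the paper's terse argument.
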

\begin{proof}
This is easy, if $\phi : F^e_* \sL \to K(X)$ corresponds to $\Delta$ and $\phi(F^e_* \sL) \supseteq \O_X$, then, abusing notation and extending $\phi$ to the fraction field, obviously $1 \in \phi(F^e_* \sL \otimes_{\O_X} \O_{X,E})$ and hence $\O_{X,E} \subseteq \phi(F^e_* \sL \otimes_{\O_X} \O_{X,E})$ for any divisorial discrete valuation ring $\O_{X,E}$ lying over $X$.  In particular, each $(\Spec \O_{X,E}, \Delta_{X,E})$ is sub-$F$-pure (where $K_{X,E} + \Delta_{X,E} = \pi^*(K_X + \Delta)$).  The previous two lemmas immediately imply that the discrepancy divisor along all such valuations is $\geq -1$ and the result follows.
\end{proof}

\subsection{Global $F$-splittings}

Throughout this subsection, we suppose that $(X, \Delta)$ is a pair and $(p^e - 1)(K_X + \Delta)$ is Cartier.

\begin{definition}[Global $F$-splitting]
\label{def.GlobalFSplitting}
If $\Delta \geq 0$, we say that \emph{$(X, \Delta)$ is globally F-split} if the map induced by $\Delta$ is surjective on global sections
\[
H^0(X, F^e_* \O_X( (1-p^e)(K_X + \Delta))) \twoheadrightarrow H^0(X, \O_X) \quad \mbox{for some } e>0.
\]
Note that this implies that $(X, \Delta)$ is $F$-pure and hence sub-$F$-pure.

For an arbitrary $\Delta$, we say that \emph{$(X, \Delta)$ is globally sub-$F$-split} if image of
\[
H^0(X, F^e_* \O_X( (1-p^e)(K_X + \Delta))) \to H^0(X, K(X)) \quad \mbox{for some } e>0
\]
contains $1$ (and hence its image globally generates a rank-$1$-module containing $\O_X$, and thus $(X, \Delta)$ is sub-$F$-pure).
\end{definition}

We will frequently be interested in splitting relatively, see \cite{SchwedeSmithLogFanoVsGloballyFRegular,HaconXuThreeDimensionalMinimalModel}.

\begin{definition}[Relative $F$-splitting]
Suppose that $\pi : X \to Z$ is a map of schemes.  We say that a pair \emph{$(X, \Delta)$ is globally F-split relative to $\pi$ (or over $Z$)} if there is an open cover $\{U_i\}$ over $Z$ such that $(\pi^{-1} U_i, \Delta|_{\pi^{-1} U_i})$ is globally $F$-split for each $i$.

Likewise we say that \emph{$(X, \Delta)$ is globally sub-$F$-split relative to $\pi$} if there is an open cover $\{U_i\}$ over $Z$ such that $(\pi^{-1} U_i, \Delta|_{\pi^{-1} U_i})$ is globally sub-$F$-split for each $i$.
\end{definition}

\subsection{Log canonical and $F$-pure centers}

We recall certain distinguished subvarieties of log canonical (respectively $F$-pure) pairs, the LC centers (respectively $F$-pure centers).

\begin{definition}[LC centers]
\label{def.LCCenter}
Suppose that $(X, \Delta)$ is a pair.  We say that an integral subscheme \emph{$Z \subseteq X$ is a log canonical center of $(X, \Delta)$} if
\begin{itemize}
\item[(a)] $\Delta$ is effective at the generic point $Q$ of $Z$,
\item[(b)] $(X, \Delta)$ is log canonical at the generic point $Q$ of $Z$,
\item[(c)] There exists a proper birational map $\pi : Y \to X$ from a normal $Y$ such that if we write $K_Y + \Delta_Y = \pi^*(K_X + \Delta_X)$, then there exists a prime divisor $E$ on $Y$ such that $\coeff_E(\Delta_Y) = 1$ and $\pi(E) = Z$.
\end{itemize}
\end{definition}

\begin{definition}[$F$-pure centers]
\label{def.FPureCenter}
Suppose that $(X, \Delta)$ is a pair in characteristic $p > 0$ with $(1-p^e)(K_X + \Delta)$ Cartier.  Say that $\Delta$ corresponds to some map $\phi : F^e_* \sL \to K(X)$, where $\sL = \O_X( (1-p^e)(K_X + \Delta))$.  We say that an integral subscheme \emph{$Z \subseteq X$ is an $F$-pure center of $(X, \Delta)$} if
\begin{itemize}
\item[(a)] $\Delta$ is effective at the generic point $Q$ of $Z$,
\item[(b)] $(X, \Delta)$ is $F$-pure at the generic point $Q$ of $Z$,
\item[(c)] If $\O_{X,Q}$ is the stalk at the generic point $Q$ of $Z$ and $I_Z$ is the ideal defining $Z$, then $\phi(F^e_* I_Z \cdot \sL) \cdot \O_{X,Q} \subseteq I_Z \cdot \O_{X,Q} \subseteq K(X)$.
\end{itemize}
We note that the choice of map $\phi$ does not change whether or not a particular $Z$ is an $F$-pure center since different maps correspond to the same divisor if and only if the maps are the same up to multiplication by a unit.
\end{definition}

We recall the following result about $F$-pure centers, see \cite[Lemmas 3.6 and 3.7]{SchwedeCentersOfFPurity} for the case when $\Delta$ is effective.

\begin{lemma}
Being an $F$-pure center is a local condition. In other words, $Z$ is an $F$-pure center of $(X, \Delta)$ if and only if $V(Q\O_{X,Q}) \subseteq \Spec \O_{X,Q}$ is an $F$-pure center of  $(\Spec \O_{X,Q}, \Delta|_{\Spec \O_{X,Q}})$, where $Q$ is the generic point of $Z$.
\end{lemma}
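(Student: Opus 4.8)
The plan is to unwind \autoref{def.FPureCenter} and observe that all three of its conditions~(a), (b), (c) are already statements about the stalk $A := \O_{X,Q}$ at the generic point $Q$ of $Z$, together with the ideal $I_Z$, the sheaf $\sL = \O_X((1-p^e)(K_X+\Delta))$, and the map $\phi : F^e_* \sL \to K(X)$. So the content of the lemma is precisely that these data restrict well to $\Spec A$, and that under this restriction $I_Z$ becomes the maximal ideal $I_Z\cdot A$ of $A$, whose zero locus is exactly the closed point $V(Q A)$.

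First I would record the compatibilities with localization. Restricting the divisor gives $\Delta|_{\Spec A}$, and since $\O_X \to A$ is flat we get $\sL|_{\Spec A} = \O_{\Spec A}((1-p^e)(K_{\Spec A} + \Delta|_{\Spec A}))$ with $K_{\Spec A}$ the restriction of the fixed canonical divisor. The Frobenius pushforward commutes with localization, $(F^e_* \sL)|_{\Spec A} = F^e_*(\sL|_{\Spec A})$, so $\phi$ restricts to a map $\phi|_{\Spec A} : F^e_*(\sL|_{\Spec A}) \to \Frac(A) = K(X)$; and because the dictionary between such maps and $\bQ$-divisors (\autoref{subsec.MapsAndQDivisors}, \autoref{subSec.MapsAndNonEffective}) is compatible with passing to an open subset and then localizing — see \cite[Section 4]{BlickleSchwedeSurveyPMinusE}, reducing to the effective case via the twisting trick of \autoref{subSec.MapsAndNonEffective} since we do not assume $\Delta \geq 0$ — the divisor attached to $\phi|_{\Spec A}$ is $\Delta|_{\Spec A}$, so $\phi|_{\Spec A}$ is a legitimate choice for the map in \autoref{def.FPureCenter}(c) applied to $(\Spec A, \Delta|_{\Spec A})$. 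Finally, exactness of localization gives, for the ideal $I_Z$,
\[
\phi\big(F^e_* I_Z \cdot \sL\big) \cdot A \;=\; \phi|_{\Spec A}\big(F^e_* (I_Z A)\cdot(\sL|_{\Spec A})\big) \quad\text{inside } K(X),
\]
and $I_Z A$ is the maximal ideal of $A$ because $Q$ is the generic point of $Z = V(I_Z)$.

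Then I would observe that with these in place the equivalence is formal. Conditions (a) (``$\Delta$ is effective at $Q$'') and (b) (``$(X,\Delta)$ is $F$-pure at $Q$'') are verbatim the conditions (a), (b) for $(\Spec A, \Delta|_{\Spec A})$ at its closed point, since both refer only to $A$. For (c): in \autoref{def.FPureCenter}(c) applied to $(\Spec A, \Delta|_{\Spec A})$ with center $V(QA)$, the ``stalk at the generic point'' is $A$ itself and the defining ideal is $I_Z A$, so that instance of (c) reads $\phi|_{\Spec A}(F^e_* (I_Z A)\cdot(\sL|_{\Spec A})) \cdot A \subseteq I_Z A$; by the displayed identity this is the same containment as $\phi(F^e_* I_Z\cdot\sL)\cdot A \subseteq I_Z A$, which is (c) for $(X,\Delta)$. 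Hence $Z$ is an $F$-pure center of $(X,\Delta)$ if and only if $V(Q\O_{X,Q})$ is an $F$-pure center of $(\Spec\O_{X,Q},\Delta|_{\Spec\O_{X,Q}})$.

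The only real content — and the step I expect to be the main obstacle — is the claim in the second paragraph that the correspondence $\phi \leftrightarrow \Delta_\phi$ (and hence the whole setup of \autoref{def.FPureCenter}) is compatible with localization in the generality where $\Delta$ need not be effective and $\phi$ takes values in $K(X)$. For effective $\Delta$ this is essentially \cite[Lemmas 3.6 and 3.7]{SchwedeCentersOfFPurity}; in general I would pick an effective Cartier divisor $E$ with $\phi(F^e_*\sL((1-p^e)E))\subseteq\O_X$ as in \autoref{subSec.MapsAndNonEffective}, apply the effective statement to the resulting map, and then untwist, noting that $\Delta_\phi = \Delta' - E$ and that the image condition in (c) is unchanged by the twist. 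Everything else is bookkeeping with flat base change.
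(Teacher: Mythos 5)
Your proposal is correct and follows essentially the same route as the paper: conditions (a) and (b) are already statements at the stalk $\O_{X,Q}$, and condition (c) transfers because localization is exact and $\O_X$-linear, so $\phi\big(F^e_*(I_Z\cdot\sL\cdot\O_{X,Q})\big)=\phi(F^e_* I_Z\cdot\sL)\cdot\O_{X,Q}$, which is exactly the identity the paper uses. Your extra care about the map--divisor dictionary localizing in the non-effective case (via the twisting trick) is fine but not treated as an obstacle in the paper's short argument.
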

\begin{proof}
Conditions (a) and (b) of Definition \ref{def.FPureCenter} are certainly equivalent for either case.  Now, if $Z$ is an $F$-pure center of $(X, \Delta)$, then $V(Q \O_{X,Q})$ is also an $F$-pure center by localization.  On the other hand 
\[
\phi\big(F^e_* (I_Z \cdot \sL \cdot \O_{X,Q})\big) \subseteq I_Z \cdot \O_{X,Q}
\]
is the same as
\[
\phi(F^e_* I_Z \cdot \sL) \cdot \O_{X,Q} \subseteq I_Z \cdot \O_{X,Q}
\]
and the proof is complete.
\end{proof}

\begin{remark}
The above definition differs in several ways from the one given in \cite{SchwedeCentersOfFPurity}.  One notable way is that we do not assume that $\Delta \geq 0$ (we also make several simplifying assumptions).
\end{remark}

\begin{remark}
Part (c) for either definition (\ref{def.LCCenter} and \ref{def.FPureCenter}) can also be interpreted as follows (also assuming resolution of singularities for \ref{def.LCCenter}):  For any Cartier divisor $H > 0$ passing through the generic point $Q$ of $Z$, $(X, \Delta+tH)$ is not sub-log canonical at $Q$ (respectively not sub-$F$-pure at $Q$) for any $t > 0$.
\end{remark}

\begin{lemma}
Suppose that $X$ is an $F$-finite normal integral scheme of characteristic $p > 0$ and that $\Delta$ is a $\bQ$-divisor such that $(p^e - 1)(K_X + \Delta)$ is Cartier and $\Delta$ satisfies conditions (a) and (b) from \autoref{def.FPureCenter}.  Then if $Z \subseteq X$ is a log canonical center of $(X, \Delta)$, then $Z$ is also an $F$-pure center.
\end{lemma}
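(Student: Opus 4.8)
The plan is to verify condition (c) of \autoref{def.FPureCenter}, since conditions (a) and (b) hold by hypothesis; I would establish it by restricting the splitting map to the divisorial valuation witnessing that $Z$ is a log canonical center. The statement is local at the generic point $Q$ of $Z$, so after localizing we may take $X = \Spec \O_{X,Q}$ with maximal ideal $\mathfrak{m} = I_Z \O_{X,Q}$; since $\Delta$ is effective at $Q$, the map $\phi : F^e_* \sL \to K(X)$ attached to $\Delta$ (with $\sL = \O_X((1-p^e)(K_X + \Delta))$) lands in $\O_X$, and it remains to show $\phi\big(F^e_*(\mathfrak{m} \cdot \sL)\big) \subseteq \mathfrak{m}$. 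Being a log canonical center, part (c) of \autoref{def.LCCenter} furnishes a proper birational $\pi : Y \to X$ from a normal $Y$ and a prime divisor $E \subseteq Y$ with $\pi(E) = Z$ and $\coeff_E(\Delta_Y) = 1$, where $K_Y + \Delta_Y = \pi^*(K_X + \Delta)$. Put $v = \ord_E$. Because the generic point of $E$ maps to $Q$, the center of $v$ is the closed point of $\Spec \O_{X,Q}$, so for $f \in \O_{X,Q}$ one has $f \in \mathfrak{m}$ precisely when $v(f) > 0$; it thus suffices to prove $v\big(\phi(F^e_* x)\big) \geq 1$ for every $x \in \mathfrak{m} \cdot \sL$.

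For this I would pass to the discrete valuation ring $R := \O_{X,E}$, over which $\sL$ is free. By \autoref{lem.BirationalBehaviorOfMaps}, restricting (the $K(X)$-extension of) $\phi$ to $F^e_* \sL \otimes_{\O_X} R$ gives, on $\Spec R$, a map corresponding to the divisor $\Delta_{X,E}$ with $K_{X,E} + \Delta_{X,E} = \pi^*(K_X + \Delta)|_{\Spec R}$; comparing with $K_Y + \Delta_Y = \pi^*(K_X + \Delta)$ shows $\Delta_{X,E}$ has coefficient $\coeff_E(\Delta_Y) = 1$. I then invoke the (unlabeled) discrete-valuation-ring lemma stated just before \autoref{cor.SubFPureImpliesSubLC}: if $t$ is a uniformizer of $R$ and $\Phi$ generates $\Hom_R(F^e_*(\sL \otimes R), R)$ as an $F^e_* R$-module, the restricted map has the form $F^e_*(\blank) \mapsto \Phi\big(F^e_*(u t^{p^e-1} \blank)\big)$ for some unit $u \in R$, the exponent being exactly $p^e - 1$ because the coefficient of $\Delta_{X,E}$ is $1$ (via the relation $\Delta_\phi = \tfrac{a}{p^e-1}\Div(t)$ established there). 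For $x \in \mathfrak{m} \cdot \sL$ we have $v(x) \geq 1$, hence $v(u t^{p^e-1} x) \geq p^e$, so $u t^{p^e-1} x = t^{p^e} m$ for some $m \in \sL \otimes R$; using $F^e_*(t^{p^e} m) = t \cdot F^e_*(m)$, we get $v\big(\phi(F^e_* x)\big) = v\big(\Phi(F^e_*(t^{p^e} m))\big) = v\big(t\cdot\Phi(F^e_* m)\big) \geq 1$ since $\Phi(F^e_* m) \in R$. This proves (c), hence $Z$ is an $F$-pure center.

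The point requiring the most care is the normalization bookkeeping that converts ``$E$ has discrepancy $-1$ over $(X,\Delta)$'' into ``$\Delta_{X,E}$ has coefficient exactly $1$''---this is precisely the role of \autoref{lem.BirationalBehaviorOfMaps}, the ${\bf b}$-divisor convention, and the DVR lemma---together with keeping the Frobenius-twisted module structure straight when moving from $x$ to $\Phi(F^e_*(\ldots))$. A quicker but less self-contained route: if $Z$ were not an $F$-pure center, the remark following \autoref{def.FPureCenter} would supply an effective Cartier divisor $H > 0$ through $Q$ and some $t > 0$ with $(X, \Delta + tH)$ sub-$F$-pure, hence sub-log canonical, at $Q$ by \autoref{cor.SubFPureImpliesSubLC}; but pulling $H$ back to $Y$ yields $\coeff_E(\Delta_Y + t\pi^* H) = 1 + t\cdot\ord_E(\pi^* H) \geq 1 + t > 1$, so $E$ has discrepancy below $-1$ over $(X, \Delta + tH)$, contradicting sub-log canonicity. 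Either way, $Z$ is an $F$-pure center.
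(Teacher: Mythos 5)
Your argument is correct, but it is genuinely different from the paper's own proof, which is a two-line reduction: localize at the generic point $Q$ of $Z$, where condition (a) makes $\Delta$ effective, and then quote the result of \cite{SchwedeCentersOfFPurity} that a log canonical center of an $F$-pure pair with $\Delta \geq 0$ is an $F$-pure center. You perform the same localization but then re-prove the cited fact from scratch: you take the divisorial valuation $v = \ord_E$ of the log canonical place with $\coeff_E(\Delta_Y)=1$, use \autoref{lem.BirationalBehaviorOfMaps} to identify the restriction of $\phi$ to the DVR $\O_{X,E}$ with the map of the divisor $\Delta_{X,E}$ of coefficient exactly $1$ (so $a = p^e-1$ in the DVR normal form $\Phi(F^e_*(ut^a\,\blank))$), and then the Frobenius-twisted module structure gives $v(\phi(F^e_* x)) \geq 1$ for $x \in \mathfrak m\cdot\sL$, which is condition (c) of \autoref{def.FPureCenter} since the center of $v$ is the closed point. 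This buys self-containedness (no appeal to the external reference) at the cost of length; the paper's route buys brevity by outsourcing exactly the computation you carried out. One caveat: your ``quicker'' alternative leans on the unproved remark following \autoref{def.FPureCenter} in the direction ``(c) fails $\Rightarrow$ there exist $H>0$ and $t>0$ with $(X,\Delta+tH)$ sub-$F$-pure at $Q$,'' which is stated without proof in the paper (and requires a small argument, e.g.\ extracting a single $h\in\mathfrak m$ with $\phi(F^e_*hs)$ a unit and iterating as in the proof of sharp $F$-purity for all divisible $e$); since your main argument does not need it, this does not affect correctness, but the first route is the one that stands on its own.
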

\begin{proof}
In the case that $\Delta \geq 0$, this follows from \cite{SchwedeCentersOfFPurity}.  However, by localizing at the generic point of $Z$, we may assume that $\Delta \geq 0$.
\end{proof}

\subsection{The different and divisorial part thereof}
\begin{definition}\textnormal{\cite[Chapter 4]{Kol13}}
	Let $(X, S+\Delta)$ be a log pair, i.e., $X$ is a normal integral scheme, $K_X+S+\Delta$ is $\QQ$-Cartier, $S$ is a reduced divisor and $\Delta$ is a $\bQ$-divisor with no common components with $S$. Let $S^{\textnormal{N}} \to S$ be the normalization of $S$. There exists a canonically determined $\QQ$-divisor $\Delta_{S^{\textnormal{N}}}$ on $S^{\textnormal{N}}$ such that $(K_X+S+\Delta)|_{S^{\textnormal{N}}}\sim_{\QQ} K_{S^{\textnormal{N}}}+\Delta_{S^{\textnormal{N}}}$. The $\Q$-divisor $\Delta_{S^{\textnormal{N}}}$ is called the \emph{different of the adjunction} or simply the \emph{different}.
	\end{definition}
If $\Delta\>0$, then $\Delta_{S^{\textnormal{N}}}\>0$. For this and other properties of the different see \cite[Chapter 17]{KollarFlipsAndAbundance} and \cite[Chapter 4]{Kol13}.

\begin{definition}\label{dfn:div-part}
		Let $f: X\to Z$ be a surjective proper morphism between two normal integral schemes and suppose $K_X+\Delta\sim_\Q f^*L$, where $\Delta=\sum d_i\Delta_i$ is a $\bQ$-divisor on $X$, and $L$ is a $\Q$-Cartier $\Q$-divisor on $Z$. Suppose $(X, \Delta)$ is sub-log canonical over a neighborhood of the generic point of $Z$, i.e., $(f^{-1}U, \Delta|_{f^{-1}U})$ is sub-log canonical for some Zariski dense open subset $U\subset Z$. Then we define two divisors $\Delta_{\div}$ and $\Delta_{\mod}$ on $Z$ in the following way:
	\begin{align*}
	\Delta_{Z, \div} = \Delta_{\div} &=\sum (1-c_Q)Q, \text{ where } Q\subset Z \text{ are prime Weil divisors of } Z, \text{ and}\\
	c_Q &=\text{sup}\left\{ c\in \R \;\Big|\; \begin{array}{r}(X, \Delta+cf^*(\eta_Q)) \text{ is sub-log canonical over a} \\\text{neighborhood of the generic point } \eta_Q \text{ of } Q  \end{array}\right\},\\
	\Delta_{Z, \mod} = \Delta_{\mod} &=L-K_Z-\Delta_{\div}, \text{ so that } K_X+\Delta\sim_\Q f^*(K_Z+\Delta_{\div}+\Delta_{\mod}).
	\end{align*}
The divisor $\Delta_{\div}$ is called the \emph{divisorial part of the adjunction} and $\Delta_{\mod}$ the \emph{moduli part}.	
	\end{definition}
\begin{remark}[Is $\Delta_{\div}$ an honest divisor?]
\label{rem.HonestDivisors}
We do not know whether $\Delta_{\div}$ is an honest divisor in general (the right hand side of $\Delta_{\div}$ could be an infinite sum). However if we are working with varieties in characteristic $0$, it is not hard to see that $\Delta_{\div}$ is in fact a finite sum. Indeed, by replacing $X$ with a log resolution of $(X, \Delta)$ we may assume that $(X, \Delta)$ is a SNC pair (since obviously the definition of $\Delta_{\div}$ is independent of any birational modification of $X$). Write $\Delta=\Delta^h+\Delta^v$, where each component of $\Delta^h$ dominates $Z$ and no component of $\Delta^v$ dominates $Z$. Let $\Delta_i$, an integral scheme, be a component of $\Delta^h$.

Consider the induced morphism $f|_{\Delta_i}:\Delta_i\to Z$. By generic smoothness, there exists a dense open set $U_i\subseteq Z$ such that the fibers of $f|_{\Delta_i}$ over $U_i$ are all smooth. Let $Q$ be a prime Weil divisor on $Z$ whose generic point $\eta_Q$ is contained in $U_i$. Then $(f|_{\Delta_i})^*(\eta_Q)$ is smooth over $k(\eta_Q)$. But $(f|_{\Delta_i})^*(\eta_Q)$ is the scheme theoretic intersection of $\Delta_i$ and $f^*(\eta_Q)$. Therefore $\Delta_i$ intersects $f^*(\eta_Q)$ transversally. Let $U$ be the intersection of all such open sets $U_i$ corresponding to all components $\Delta_i$ of $\Delta^h$, and $V=U\setminus f(\mbox{Supp}(\Delta^v))$. Then for any prime Weil divisor $Q$ of $Z$ such that $\eta_Q\in V$, $\Delta+f^*(\eta_Q)$ is a SNC divisor; in particular $(X, \Delta+f^*(\eta_Q))$ is LC over $\eta_Q$. Therefore $c_Q=1$ for all $\eta_Q\in V$, and the right hand side of $\Delta_{\div}$ is a finite sum.

This proof fails in characteristic $p>0$ because resolutions of singularities are not known to exist and even worse, generic smoothness is known to fail. However with an additional hypothesis (which appears frequently for us, see \autoref{prop.PInverseOnFields} and the results immediately following it) we can show that $\Delta_{\div}$ is a finite sum in char $p>0$.
\end{remark}

\begin{lemma}
\label{lem.DivisorialPartOfDifferent}
With the same hypothesis as \autoref{dfn:div-part}, assume additionally that all schemes are $F$-finite and of characteristic $p > 0$.  Further assume that $(p^e - 1) L$ and $(p^e - 1)(K_X + \Delta)$ are Cartier and that $(1-p^e)(K_X + \Delta) \sim (1-p^e) f^* L$.  Fix a map $$\phi_{\Delta} : F^e_* \O_X( (1-p^e) f^* L) \to K(X)$$ corresponding to $\Delta$ as in \autoref{subSec.MapsAndNonEffective}.  Now assume that\footnote{Here $f^{-1}$ is inverse image of sheaves in the category of Abelian groups, as in \cite[Chapter II, Section 1]{Hartshorne}} \begin{equation}
\label{eq.MapSendsKZtoKZ}
0 \neq \phi_{\Delta}(F^e_* f^{-1} \O_Z( (1-p^e) L) ) \subseteq f^{-1} K(Z) \subseteq K(X).
\end{equation}
Let $\Delta_Z$ denote the divisor on $Z$ corresponding to the induced $\phi_Z : F^e_* \O_Z( (1-p^e) L) \to K(Z)$.
Then $\Delta_{\div}$ is an honest $\bR$-divisor and $\Delta_{\div} \leq \Delta_Z$.
\end{lemma}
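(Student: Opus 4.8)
The plan is to prove $\Delta_{\div}\le\Delta_Z$ coefficient by coefficient and then to read off finiteness of $\Delta_{\div}$. Fix a prime divisor $Q\subseteq Z$ with generic point $\eta_Q$; the content of $\Delta_{\div}\le\Delta_Z$ at $Q$ is $1-c_Q\le\coeff_Q(\Delta_Z)$, i.e.\ that $(X,\Delta+c\,f^*(\eta_Q))$ is sub-log canonical near $\eta_Q$ for every $c<1-\coeff_Q(\Delta_Z)$. I would begin by localizing: replace $Z$ by $\Spec\O_{Z,\eta_Q}$, the spectrum of a DVR with uniformizer $t$, and $X$ by its base change (still normal, and, being torsion-free over a DVR, flat over $Z$). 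None of the hypotheses---nor the numbers $c_Q$, $\coeff_Q(\Delta_Z)$---change, and now $f^*(\eta_Q)=\div_X(s)$ is a nonzero effective Cartier divisor, where $s\in f^{-1}\O_Z\subseteq\O_X$ is the image of $t$. By \autoref{cor.SubFPureImpliesSubLC} it suffices to show $(X,\Delta+c\,f^*(\eta_Q))$ is sub-$F$-pure for all such $c$, and since $c_Q$ is a supremum it is enough to treat $c$ of the form $b/(p^{ne}-1)$ ($b\in\bZ$, $n\ge1$), which are dense below $1-\coeff_Q(\Delta_Z)$; passing to level $ne$ just means replacing $\phi_\Delta$, $\phi_Z$ by their $n$-fold self-compositions, which still correspond to $\Delta$, $\Delta_Z$ by the lemma following \autoref{eq.NoneffectiveMapComposition}, and for which the compatibility of \autoref{eq.MapSendsKZtoKZ} persists (it propagates through the composition using $\O_X$-linearity and exactness of $f^{-1}$). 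So I may take $c=b/(p^e-1)$.

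\paragraph{The crux.}
The heart of the argument is a transfer of sub-$F$-purity \emph{upward} along $f$: such a statement fails for general inversion of adjunction, but works here precisely because \autoref{eq.MapSendsKZtoKZ} says the relevant splitting genuinely descends. The pair $(X,\Delta+c\,f^*(\eta_Q))$ corresponds, in the sense of \autoref{subSec.MapsAndNonEffective}, to the map $\psi$ obtained by restricting the fraction-field extension of $\phi_\Delta$ to $F^e_*\bigl(s^b\O_X((1-p^e)f^*L)\bigr)$, where $s^b\O_X((1-p^e)f^*L)=\O_X\bigl((1-p^e)(K_X+\Delta+c\,f^*(\eta_Q))\bigr)$ inside $K(X)$. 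This module contains the subsheaf $s^b f^{-1}\O_Z((1-p^e)L)=f^{-1}\bigl(t^b\O_Z((1-p^e)L)\bigr)=f^{-1}\O_Z\bigl((1-p^e)(K_Z+\Delta_Z+cQ)\bigr)$, and, since \autoref{eq.MapSendsKZtoKZ} says exactly that $\phi_\Delta$ restricted to $F^e_*f^{-1}\O_Z((1-p^e)L)$ induces $\phi_Z$, the map $\psi$ carries $F^e_*f^{-1}\O_Z\bigl((1-p^e)(K_Z+\Delta_Z+cQ)\bigr)$ onto $f^{-1}$ of the image of $\phi_Z$ on $F^e_*\O_Z\bigl((1-p^e)(K_Z+\Delta_Z+cQ)\bigr)$. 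Now $\coeff_Q(\Delta_Z+cQ)=\coeff_Q(\Delta_Z)+c<1$, and $Z$ is the spectrum of a DVR, so this displayed image of $\phi_Z$ contains $1$ (on the spectrum of a DVR, a map whose associated divisor has coefficient $\le1$ has $1$ in its image; \cf the proof of \autoref{cor.SubFPureImpliesSubLC}); hence $1$ lies in the image of $\psi$, i.e.\ $(X,\Delta+c\,f^*(\eta_Q))$ is sub-$F$-pure (invoking \autoref{lem.OneSplitImpliesMore} to pass from level $ne$ to all sufficiently divisible levels). This gives $c_Q\ge1-\coeff_Q(\Delta_Z)$, hence $\Delta_{\div}\le\Delta_Z$. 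I expect this transfer to be the main point; the remainder is bookkeeping---matching up the twisted modules, and checking the compatibility survives self-composition.

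\paragraph{Honesty of $\Delta_{\div}$.}
Finally, to see $\Delta_{\div}=\sum(1-c_Q)Q$ is an honest (locally finite, hence finite, since $Z$ is excellent) divisor: by the inequality just proved, $1-c_Q>0$ forces $\coeff_Q(\Delta_Z)>0$, and $\Delta_Z$ has finite support; and $c_Q>1$ can occur only when $Q=\overline{f(E)}$ for some prime divisor $E$ of $X$ with $\coeff_E(\Delta)<0$. Indeed, if $c_Q>1$ then $(X,\Delta+f^*(\eta_Q))$ is sub-log canonical near $\eta_Q$, which forces every prime component $E$ of the nonzero divisor $f^*(\eta_Q)$---each of which dominates $Q$---to satisfy $\coeff_E(\Delta)+\mult_E f^*(\eta_Q)\le1$, whence $\coeff_E(\Delta)<0$; and a prime divisor of $X$ dominates at most one prime divisor of $Z$. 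Since $\Delta$ has finitely many components, $c_Q\ne1$ for only finitely many $Q$, so $\Delta_{\div}$ is a finite sum.
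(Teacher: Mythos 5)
Your proposal follows essentially the same route as the paper: localize $Z$ at $\eta_Q$ so it becomes a DVR, use the compatibility \autoref{eq.MapSendsKZtoKZ} to transfer ``$1$ is in the image'' from $\phi_Z$ (twisted by $t^{b}$) up to $\phi_\Delta$ (twisted by $s^{b}$), conclude sub-$F$-purity and hence sub-log canonicity of $(X,\Delta+c\,f^*(\eta_Q))$ via \autoref{cor.SubFPureImpliesSubLC}, and then bound the set of $Q$ with $c_Q\ne 1$ by the components of $\Delta_Z$ (for $c_Q<1$) and by negative components of $\Delta$ (for $c_Q>1$). This is the paper's proof in substance, with your extra bookkeeping about levels $ne$ and self-composition being a harmless elaboration of what the paper leaves implicit.

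One small slip in the finiteness step: from $c_Q>1$ you pass to $(X,\Delta+f^*(\eta_Q))$ being sub-log canonical, i.e.\ you take $c=1$, and then claim $\coeff_E(\Delta)+\mult_E f^*(\eta_Q)\le 1$ forces $\coeff_E(\Delta)<0$. When $\mult_E f^*(\eta_Q)=1$ this only gives $\coeff_E(\Delta)\le 0$, so $E$ need not be a component of $\Delta$ at all and the counting argument (``$\Delta$ has finitely many components'') would not apply. The fix is immediate and is what the paper does: choose $c$ with $1<c<c_Q$, so that $\coeff_E(\Delta)\le 1-c\,\mult_E f^*(\eta_Q)\le 1-c<0$ and $E$ really is a (vertical) component of $\Delta$ with negative coefficient dominating $Q$. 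With that one-line correction your argument is complete.
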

While we phrased this in a somewhat funny way involving $f^{-1}$ and $L$, the requirement
\[
0 \neq \phi_{\Delta}(F^e_* f^{-1} \O_Z( (1-p^e) L) ) \subseteq f^{-1} K(Z)
 \]
of \autoref{lem.DivisorialPartOfDifferent} should be thought of as requiring that $\phi_{\Delta} : F^e_* K(X) \to K(X)$ restricts to a nonzero and hence surjective map $F^e_* K(Z) \to K(Z)$. We used $f^{-1}$ and not $f^*$ because we do not want to tensor up to sheaves of $\O_X$-modules.
	
\begin{proof}
First we show that $\Delta_{\div} \leq \Delta_Z$.  This is local on $Z$ so we assume that $Z$ is the spectrum of a DVR with parameter $x$ and that $z \in Z$ is the unique closed point.  For any rational $\lambda$, we consider the condition that $(Z, \Delta + \lambda z)$ is sub-$F$-pure, which means that
\[
\phi_Z : F^e_* x^{\lceil(p^e - 1)\lambda \rceil} \cdot \O_Z( (1-p^e) L) \to K(Z)
\]
has $1$ in its image for sufficiently divisible $e > 0$.  But then since $\phi_Z$ extends to $\phi_{\Delta}$ we see that $\phi_{\Delta} : F^e_* x^{\lceil(p^e - 1)\lambda \rceil} \O_X( (1-p^e) f^* L) \to K(X)$ also has $1$ in its image.  Hence $(X, \Delta + \lambda f^* z)$ is also sub-$F$-pure and thus also sub-log canonical by \autoref{cor.SubFPureImpliesSubLC}.  It follows that
\[
\begin{array}{rl}
c_z := & \sup\{ c \in \bR \; |\; (X, \Delta + c f^* z) \text{ is sub-log canonical}\}\\
 \geq & \sup\{ c \in \bR \; |\; (Z, \Delta_Z + c z) \text{ is sub-$F$-pure}\}.
\end{array}
\]
If we write $\Delta_Z = a z$, then $\sup\{ c \in \bR \; |\; (Z, \Delta_Z + c z) \text{ is sub-$F$-pure}\} = 1-a$.  Hence $c_z \geq 1-a$ and so $1-c_z \leq a$.  This proves the inequality $\Delta_{\div} \leq \Delta_Z$.

For the statement that $\Delta_{\Div}$ is actually a divisor we can no longer assume that $Z$ is the spectrum of a DVR.  Since $\Delta_{\div} \leq \Delta_Z$, we just need to show that $\Delta_{\Div}$ can have at most finitely many components with negative coefficients.
\begin{claim}
If $Q$ is a point of codimension $1$ in $Z$ and $\coeff_Q(\Delta_{\Div}) < 0$, then $\Delta^v$ has a component whose generic point maps to $Q$.
\end{claim}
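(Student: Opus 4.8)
The plan is to prove the contrapositive: assuming no component of $\Delta^v$ has generic point mapping to $Q$, I will show $c_Q \le 1$, hence $\coeff_Q(\Delta_{\div}) = 1 - c_Q \ge 0$, contradicting $\coeff_Q(\Delta_{\div}) < 0$. First I would localize at $Q$, replacing $Z$ by the DVR $\O_{Z,Q}$ with parameter $x$ and closed point $\eta_Q$, and $X$ by $X \times_Z \Spec \O_{Z,Q}$; this is harmless since both $\coeff_Q(\Delta_{\div})$ and the sub-log-canonicity condition defining $c_Q$ depend only on $f$ over a neighborhood of $\eta_Q$, and indeed $f^*(\eta_Q)$ only makes sense there, as the divisor $\div_X(f^* x)$. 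After this reduction, the hypothesis on $\Delta^v$ says precisely that \emph{every} component of $\Delta$ dominates $Z$: a vertical component of $\Delta$ over the DVR would be contained in the closed fibre $f^{-1}(\eta_Q)$, hence would map to $Q$, which we have excluded.

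The key step is then an elementary coefficient comparison at a component of the central fibre. Since $f$ is surjective and $X$ is integral, $f^* x$ is a nonzero nonunit, so $f^*(\eta_Q) = \div_X(f^* x)$ is a nonzero effective divisor; fix one of its prime components $E_0$, appearing with multiplicity $m_0 \ge 1$. As $E_0$ is contained in the closed fibre while every component of $\Delta$ dominates $Z$, we have $\coeff_{E_0}(\Delta) = 0$, so for every $c > 1$,
\[
\coeff_{E_0}\big(\Delta + c\, f^*(\eta_Q)\big) \;=\; c\, m_0 \;\ge\; c \;>\; 1 .
\]
Testing the definition of sub-log canonical with the identity map $X \to X$, this shows $(X,\ \Delta + c\, f^*(\eta_Q))$ is not sub-log canonical; and since the generic point of $E_0$ maps to the unique closed point $\eta_Q$ of the DVR, this failure occurs over \emph{every} neighborhood of $\eta_Q$. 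Hence $c_Q \le 1$, which is the desired contradiction.

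I do not anticipate a genuine obstacle: the argument is purely about discrepancies, and notably it uses neither the characteristic-$p$ hypotheses nor the map $\phi_\Delta$ and \autoref{eq.MapSendsKZtoKZ} — these enter only into the inequality $\Delta_{\div} \le \Delta_Z$ in the other half of the lemma. The two points that need a little care are the reduction to the DVR case (ensuring that $f^*(\eta_Q)$ is defined and that all vertical components of $\Delta$ disappear) and the observation that the generic point of $E_0$ lies over $\eta_Q$, which is exactly what transfers the failure of sub-log canonicity to a whole neighborhood of $\eta_Q$.
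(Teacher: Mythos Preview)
Your proof is correct and follows essentially the same approach as the paper's: both test sub-log canonicity on $X$ itself at a prime component of $f^*(\eta_Q)$ and use the resulting coefficient inequality $\coeff_E(\Delta) + c\cdot m_E \le 1$. The paper argues directly (from $c_Q>1$ it deduces $\coeff_E(\Delta)\le 1-c<0$, so $E$ lies in $\Delta^v$), whereas you phrase it as the contrapositive and localize explicitly at $Q$; this is purely a difference in presentation.
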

\begin{proof}[Proof of claim]
By hypothesis, there exists a real number $c > 1$ such that $(X, \Delta + c f^*(\eta_Q))$ is sub-log canonical over a neighborhood of $Q$.    Let $E$ be an irreducible component in $\Supp(f^*(\eta_Q))$.  Since $(X, \Delta + cf^*(\eta_Q))$ is sub-log canonical, $1 \geq \coeff_E(\Delta + c f^*(\eta_Q)) \geq \coeff_E(\Delta) +c$, then $\coeff_E(\Delta) \< 1-c < 0$.
This proves the claim.
\end{proof}

The claim proves the lemma since $\Delta$ has only finitely many components.
\end{proof}



Now suppose that $W \subseteq X$ is a log canonical center of $(X, \Delta)$.  For any proper birational map $W' \to W$ with $W'$ normal, we can find a proper birational map $\pi : X' \to X$ such that there exists a divisor $E \subseteq X$ with:
\begin{enumerate}
\item $\pi(E) = W$
\item $\pi|_E$ factors through $W' \to W$
\item the discrepancy along $E$ is equal to -1.
\end{enumerate}

\begin{definition}
\label{defn.DivisorialPartOfTheDifferent}
The \emph{divisorial part of the different on $W' \to W \subseteq X$, relative to $E$}, denoted by $\Delta_{W', E, \Div}$ (or simply by $\Delta_{W', \Div}$ if $E$ is implicit), is defined to be the divisorial part of the adjunction of $E \to W'$ defined as in \autoref{dfn:div-part}.
\end{definition}

\begin{remark}
While it is clear that $\Delta_{W', E, \Div}$ only depends on the valuation of $E$ (and not the particular choice of $X'$), we do not know that it is independent of the choice of $E \subseteq X'$ in general.  It is because of this that we include $E$ in the definition and notation for the divisorial part of the different.   Note that in characteristic $p > 5$, if $\dim X \leq 3$, then by \cite[Lemma 4.10]{DasHacon} we know that  $\Delta_{W', E, \Div}$ is independent of the choice of $E$ in certain cases.  We also believe this is true in higher dimensions in characteristic zero by a similar method.  Unfortunately we do not know of a reference and we feel that proving it here would take us away from our focus (characteristic $p > 0$).  On the other hand we will see in \autoref{sec.CanBundleFormula} that the $F$-different can be deduced via a version of a canonical bundle formula for $E \to W'$, see \autoref{cor.FadjunctionViaCanBundle}.  However, because it can also be obtained independently from $E$, the $F$-different is independent of the choice of $E$.
\end{remark}

\begin{remark}
\label{rem.DivPartBDivisor}
	
	Let $\sigma:Z'\to Z$ be a birational morphism with $Z'$ normal and $X'$ the normalization of the component of $X\times_Z Z'$ dominating $Z'$ so that $X' \to X$ is birational.
	\begin{equation}\label{base-change-diagram}
    \begin{array}{c}
	\xymatrixcolsep{3pc}\xymatrix{ X\ar[d]_f & X'\ar[l]_{\sigma'}\ar[d]^{f'}\\
	Z & Z'\ar[l]^{\sigma}
	}
\end{array}
	\end{equation}	
	We can then define the divisorial part of the adjunction $\Delta_{Z', \Div}$ for $X' \to Z'$.  It is clear that $\sigma_* \Delta_{Z', \Div} = \Delta_{Z, \Div}$ since at points where $\sigma$ is an isomorphism, there is nothing to do.  Therefore we can define ${\bf \Delta}_{\Div}$ the {\bf b}-divisor over $Z$ picking out $\Delta_{Z', \Div}$.
	\end{remark}

We finally discuss a notion of descent.

\begin{definition}
Suppose that $Z$ is a normal integral scheme and ${\bf D}$ is a {\bf b}-divisor over $Z$.  For a proper birational map $Z' \to Z$, we say that \emph{${\bf D}$ descends to $Z'$ (as a $\bQ$-Cartier divisor)} if ${\bf D}_{Z'}$ is $\bQ$-Cartier and for every further birational $\mu : Z'' \to Z'$, $\mu^* {\bf D}_{Z'} = {\bf D}_{Z''}$.
\end{definition}

And we recall the following result of Ambro.

\begin{lemma}\textnormal{\cite{AmbroAdjunctionConjecture,HaconLCInversionAdjunction}}  Suppose that $Z$ is an log canonical center of $(X, \Delta)$ relative to $E$ and that $X$ is a variety of characteristic zero.  Then there exists a proper birational map $Z' \to Z$ with $Z'$ normal such that the {\bf b}-divisor ${\bf K}_Z + {\bf \Delta}_{\Div}$ descends to $Z'$ as a $\bQ$-Cartier divisor.
\end{lemma}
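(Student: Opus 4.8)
The plan is to recognize the \textbf{b}-divisor ${\bf K}_Z + {\bf \Delta}_{\Div}$ as the ``boundary part'' of a canonical bundle formula for an lc-trivial fibration over (a birational model of) $Z$, and then to quote the characteristic-zero structure theory for such fibrations. First I would fix the data witnessing that $Z$ is an lc center of $(X,\Delta)$ relative to $E$: a proper birational $\pi\colon X'\to X$ with $X'$ normal, a prime divisor $E\subseteq X'$ with $\coeff_E\Delta'=1$ and $\pi(E)=Z$, where $K_{X'}+\Delta'=\pi^*(K_X+\Delta)$; after a birational modification of $Z$ I may assume $\pi|_E$ factors through a normal model $Z'\to Z$. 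Passing to the normalization of $E$ and using the different (as in the subsection above), adjunction along $E$ gives $K_{E^{\textnormal N}}+\Delta_{E^{\textnormal N}}=\nu^*\big((K_{X'}+\Delta')|_E\big)$, and since the right-hand side is pulled back from $X$ we get $K_{E^{\textnormal N}}+\Delta_{E^{\textnormal N}}\sim_{\bQ,\,\pi_{E}} 0$, indeed $\sim_{\bQ}\pi_{E}^{*}L$ for a $\bQ$-Cartier divisor $L$ on $Z'$. Because $(X,\Delta)$ is log canonical near the generic point of $Z$, the pair $(X',\Delta')$ is sub-log canonical there and hence so is $(E^{\textnormal N},\Delta_{E^{\textnormal N}})$; thus $\pi_{E}\colon (E^{\textnormal N},\Delta_{E^{\textnormal N}})\to Z'$ is exactly an lc-trivial fibration of the type appearing in \autoref{dfn:div-part}, and by construction (\autoref{rem.DivPartBDivisor}, \autoref{defn.DivisorialPartOfTheDifferent}) its discriminant b-divisor is ${\bf \Delta}_{\Div}$, so ${\bf K}_Z + {\bf \Delta}_{\Div}$ is the boundary b-divisor of this fibration.

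With this identification in hand, the statement becomes precisely that the boundary b-divisor of an lc-trivial fibration is b-Cartier, i.e.\ descends as a $\bQ$-Cartier divisor to some model of the base. I would then cite \cite{AmbroAdjunctionConjecture}: after a further birational base change (and, in the engine of that argument, a finite base change together with the weak semistable reduction techniques used there) the fibration can be put in a standard form over codimension-one points of the base in which the different is computed by an explicit adjunction formula that is manifestly stable under further blow-ups; tracing this through the finite cover and back down — using the controlled transformation of $\Delta_{\Div}$ under finite morphisms — yields that ${\bf K} + {\bf \Delta}_{\Div}$ is $\bQ$-Cartier on some $Z''$ and agrees with all its pullbacks, which is the asserted descent. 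The refinement needed because $(E^{\textnormal N},\Delta_{E^{\textnormal N}})$ is only log canonical (not klt) over the generic point of $Z'$ is supplied by \cite{HaconLCInversionAdjunction}.

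The main obstacle I expect is exactly this klt-versus-lc distinction: Ambro's cleanest b-Cartier statement is for klt-trivial fibrations, whereas here $E$ may contain further lc places of $(X',\Delta')$ dominating $Z$, so over the generic point of $Z'$ the fibre pair can carry lc centers of positive codimension. I would resolve this not by trying to perturb $\Delta_{E^{\textnormal N}}$ away from its fibral lc centers (which risks moving the discriminant), but by appealing directly to \cite{HaconLCInversionAdjunction}, whose inversion-of-adjunction results are tailored to fibrations arising as adjunction along lc centers. A secondary, essentially routine, point is to replace $E\to Z$ by a genuine fibration over a normal base via Stein factorization and normalization so that \autoref{dfn:div-part} literally applies, and to observe that this does not affect the descent conclusion since descent of a b-divisor over $Z$ is a birational notion. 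I would also note that $\Delta_{E^{\textnormal N}}$ may fail to be effective, but the hypothesis $\coeff_E\Delta'=1$ together with $K_{X'}+\Delta'=\pi^*(K_X+\Delta)$ keeps it sub-log canonical over the generic point, which is all that the cited theorems require.
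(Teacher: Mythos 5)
Your proposal is correct and takes essentially the same route as the paper, which offers no argument of its own for this lemma and simply cites \cite{AmbroAdjunctionConjecture,HaconLCInversionAdjunction}: you realize the adjunction $E \to Z'$ along the lc place (with $K_{E} + \Delta_{E} \sim_{\bQ}$ a pullback from the base) as an lc-trivial fibration whose discriminant {\bf b}-divisor is ${\bf \Delta}_{\Div}$, and then invoke the characteristic-zero descent (b-$\bQ$-Cartier) statement for ${\bf K} + {\bf \Delta}_{\Div}$ from exactly those references. The only caveat worth recording is that to apply that machinery verbatim one should work on a crepant model where $\Delta'$, and hence the horizontal part of $\Delta_{E}$, is effective (e.g.\ a dlt modification extracting $E$), which is harmless since $\Delta_{Z', E, \Div}$ depends only on the valuation of $E$.
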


\subsection{The $F$-different}
\label{subsec.Fdifferent}

Now we define the $F$-different, the primary object of study in this paper.  Suppose that $(X, \Delta)$ is a pair with $(p^e - 1)(K_X + \Delta)$ Cartier.  Suppose that $W \subseteq X$ is an $F$-pure center and $\mu : W^{\textnormal{N}} \to W$ is its normalization.  Set $\sL = \O_X( (1-p^e)(K_X + \Delta))$ and let $\phi_W : F^e_* \sL|_W \to \O_W(G|_W) \subseteq K(W)$ be the map induced from the following diagram.  Here $G$ is some effective Cartier divisor not containing the generic point of $W$ chosen to contain the image of $\phi_{\Delta}$
\begin{equation}
\label{eq.AdjunctionInducingFDiff}
\xymatrix{
0 \ar[d] & 0 \ar[d] \\
F^e_* I_W \cdot \sL \ar[r]^{\phi_{\Delta}} \ar[d] & I_W \cdot \O_X(G)\ar[d]\\
F^e_* \sL \ar[d] \ar[r]^{\phi_{\Delta}} & \O_X(G) \ar[d] \ar@{^{(}->}[r] & K(X) \\
F^e_* \sL|_W \ar[d] \ar[r] & \O_W(G|_W) \ar[d] \ar@{^{(}->}[r] & K(W) \\
0 & 0 \\
}
\end{equation}
Any map $F^e_* \sL|_W \to K(W)$ clearly extends to a map $\phi_{W^{\textnormal{N}}} : F^e_* \mu^*(\sL|_W) \to K(W^{\textnormal{N}})$ (see \cite{MillerSchwedeSLCvFP} for more discussion on the divisorial interpretation of this extension).
\begin{definition}[The $F$-different]
\label{defn.FDifferent}
The \emph{$F$-different of $(X, \Delta)$ on $W^{\textnormal{N}}$} is defined to be the divisor $\Delta_{W^{\textnormal N},\Fdiff} := \Delta_{W^{\textnormal N}}$ corresponding to $\phi_{W^{\textnormal N}}$.  More generally, for any birational map $\kappa : W' \to W$ from a normal $W'$, we have an induced map $\phi_{W'} : F^e_* \kappa^*(\sL_W) \to K(W')$ and hence a divisor $\Delta_{W'} = \Delta_{W', \Fdiff} = \Delta_{W', \Fdiff}$.  This gives us the $F$-different as a {\bf b}-divisor, denoted ${\bf \Delta}_{\Fdiff}$.
\end{definition}

\begin{remark}
\label{rem.FDiffBDivisor}
Even though the $F$-different is a {\bf b}-divisor, the sum of the two {\bf b}-divisors ${\bf K} + {\bf \Delta}_{\Fdiff}$ is in fact $\bQ$-Cartier and descends down to any $W'$.  Indeed for any birational $\rho : W'' \to W'$ between normal integral schemes dominating $W$, we have $\rho^*(K_{W'} + \Delta_{W', \Fdiff}) = K_{W''} + \Delta_{W'', \Fdiff}$. In summary ${\bf K} + {\bf \Delta}_{\Fdiff}$ descends all the way to $W^N$.
\end{remark}

\section{A brief description of computing the $F$-different and an example}

We begin by a description of an algorithm for computing the $F$-different in explicit examples.

\begin{lemma}
\label{lem.ComputationalFDif}
Suppose that $Y = \bA^n = \Spec S = \Spec k[x_1, \ldots, x_n]$ and $\Spec S/I = X \subseteq Y$ is a closed normal subscheme.    Fix $\Delta \geq 0$ a $\bQ$-divisor on $X$ such that $(p^e-1)(K_X + \Delta)$ is a Cartier divisor for some positive integer $e>0$.  Suppose $Z = \Spec S/J \subseteq X$ is an $F$-pure center of $(X, \Delta)$.  Then
\begin{enumerate}
\item $\Delta$ induces a (non-unique) element $f \in (I^{[p^e]} : I), f \notin (I^{[p^e]})$.
\item $F^e_* f \in F^e_* {J^{[p^e]} : J \over J^{[p^e]}}$ corresponds to a global section of $F^e_* \O_Z( (1-p^e)K_Z)$ via the following isomorphism
 \[ F^e_* {J^{[p^e]} : J \over J^{[p^e]}}\cong \Hom(F^e_* S/J, S/J) \cong \mbox{H}^0(Z, F^e_* \O_Z( (1-p^e)K_Z)).\]
\label{lem.ComputationalFDif.2}
\end{enumerate}
Using the isomorphism of (b), $F^e_* f$ yields an induced section $\overline{f} \in \mbox{H}^0(Z, \O_Z( (1-p^e)K_Z)) = \omega_{R/I}^{(1-p^e)}$ whose corresponding divisor is the $F$-different $\Delta_{Z,\Fdiff}$.

Furthermore, suppose that $(p^e-1)K_Z \sim 0$ and so write $J^{[p^e]} : J = \langle g_e \rangle + J^{[p^e]}$.  Then $f = h g_e + g$ with $g \in J^{[p^e]}$.  In this case the $F$-different is ${1 \over p^e - 1} \Div_Z(h)$.
\end{lemma}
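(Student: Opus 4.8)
The plan is to translate the whole statement into the language of Fedder's lemma and then read off divisors. Since $S=k[x_1,\dots,x_n]$ is regular, $\Hom_S(F^e_*S,S)$ is free of rank $1$ over $F^e_*S$, generated by the usual ``monomial'' map $\Phi$, and for every ideal $\ba\subseteq S$ one has the Fedder identification $\Hom_{S/\ba}(F^e_*S/\ba,\,S/\ba)\cong F^e_*\big((\ba^{[p^e]}:\ba)/\ba^{[p^e]}\big)$ ($F^e_*\bar g$ corresponding to the map $\phi_g\colon F^e_*\bar r\mapsto\overline{\Phi(F^e_*(g\tilde r))}$, $\tilde r$ a lift of $\bar r$), together with the Galois-connection fact that $\Phi(F^e_*\ba)\subseteq\bb\iff\ba\subseteq\bb^{[p^e]}$. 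For part (1) I would take the nonzero map $\phi_\Delta\colon F^e_*\O_X((1-p^e)(K_X+\Delta))\to\O_X$ produced from $\Delta\geq 0$ by \autoref{subsec.MapsAndQDivisors}, trivialize the line bundle $\O_X((1-p^e)(K_X+\Delta))$ (this is exactly the explicit rational function realizing $(p^e-1)\Delta\sim(1-p^e)K_X$ that one has in a concrete example), so that $\phi_\Delta$ becomes a map $F^e_*\O_X\to\O_X$, i.e. an element of $\Hom_R(F^e_*R,R)$ with $R=S/I$; by the Fedder identification with $\ba=I$ this is $\phi_f$ for some $f\in(I^{[p^e]}:I)$, well defined modulo $I^{[p^e]}$ (up to a unit), and $\phi_\Delta\neq 0$ forces $f\notin I^{[p^e]}$.

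For part (2) the $F$-pure center hypothesis enters. Condition (c) of \autoref{def.FPureCenter} says $\phi_\Delta(F^e_*I_Z\cdot\sL)\cdot\O_{X,\eta_Z}\subseteq I_Z\cdot\O_{X,\eta_Z}$; via the dictionary above this becomes $\Phi(F^e_*(fJ))\subseteq J$ after localizing at $\eta_Z$, hence $fJ\subseteq J^{[p^e]}$ near $\eta_Z$ by the Galois-connection fact, and then --- adjusting the representative $f$ inside its class modulo $I^{[p^e]}$, resp. working near $Z$ --- we obtain $f\in(J^{[p^e]}:J)$; condition (b) (that $(X,\Delta)$ is $F$-pure at $\eta_Z$) makes $\phi_\Delta$ surjective near $\eta_Z$, so its reduction modulo $J$ is nonzero and $f\notin J^{[p^e]}$. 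Applying the Fedder identification a second time with $\ba=J$ and using that $Z$ is normal (so $\omega_Z$ is reflexive of rank $1$ and $\Hom_{S/J}(F^e_*S/J,S/J)\cong F^e_*\omega_Z^{(1-p^e)}=H^0(Z,F^e_*\O_Z((1-p^e)K_Z))$), the element $F^e_*f$ gives the asserted section.

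To finish I would identify the map attached to $\overline f$ with the one defining the $F$-different. Reduction modulo $J$ is precisely restriction to $Z$: in \autoref{eq.AdjunctionInducingFDiff} the surjection $F^e_*\sL\twoheadrightarrow F^e_*\sL|_W$ sends $\phi_\Delta$ to $\phi_W$, and since $f\in(J^{[p^e]}:J)$ this descended map is exactly $\phi_{\overline f}$; extending to $W^{\textnormal N}=Z$, this is $\phi_{W^{\textnormal N}}$, which by \autoref{defn.FDifferent} computes $\Delta_{Z,\Fdiff}$. Thus, unwinding \autoref{subsec.MapsAndQDivisors}, the effective divisor of $\overline f$ regarded as a section of $\O_Z((1-p^e)K_Z)$ equals $(p^e-1)\Delta_{Z,\Fdiff}$. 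Now assume $(p^e-1)K_Z\sim 0$, so $\omega_Z^{(1-p^e)}\cong\O_Z$ is free; then $\Hom_{S/J}(F^e_*S/J,S/J)$ is free of rank $1$ over $F^e_*S/J$, which says exactly that $(J^{[p^e]}:J)=\langle g_e\rangle+J^{[p^e]}$ for a generator $g_e$, and under the trivialization the generating map $\phi_{g_e}$ is the unit section and so has divisor $0$. Writing $f=hg_e+g$ with $g\in J^{[p^e]}$, the class $\overline f\in(J^{[p^e]}:J)/J^{[p^e]}$ is $\overline h$ times the generator, so its divisor as a section of $\omega_Z^{(1-p^e)}$ is $\Div_Z(h)$ (and $\overline h\neq 0$ in $S/J$ since $f\notin J^{[p^e]}$, so this is an honest effective divisor); therefore $\Delta_{Z,\Fdiff}=\frac{1}{p^e-1}\Div_Z(h)$.

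The part I expect to be genuinely delicate, rather than bookkeeping, is keeping the twists straight: matching the a priori nontrivial line bundles $\sL=\O_X((1-p^e)(K_X+\Delta))$ and $\sL|_{W^{\textnormal N}}$ with honest polynomial data, and, at the last step, being certain that the chosen generator $g_e$ corresponds to the section of $\omega_Z^{(1-p^e)}$ with divisor exactly $0$ --- i.e. that the trivialization implicit in \autoref{eq.AdjunctionInducingFDiff} agrees with the one realized by $g_e$ --- so that no spurious constant divisor creeps into $\Delta_{Z,\Fdiff}=\frac{1}{p^e-1}\Div_Z(h)$. A secondary technical point is upgrading the generic-point conditions of \autoref{def.FPureCenter} to containments valid near all of $W$, which is exactly why $f$ is only pinned down modulo $I^{[p^e]}$ and one is free to pass to a better representative.
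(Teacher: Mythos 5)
Your proposal is correct and follows essentially the same route as the paper's (much terser) proof: Fedder's criterion to produce $f\in(I^{[p^e]}:I)$, the Fedder-type criterion for $F$-pure centers to place $f$ in $J^{[p^e]}:J$ and identify its class with the restricted map of \autoref{eq.AdjunctionInducingFDiff}, and the observation that $g_e$ corresponds to the generating homomorphism $\Phi_Z$ (of divisor zero) so that $f$ is $\Phi_Z$ pre-multiplied by $h$, giving $\Delta_{Z,\Fdiff}=\frac{1}{p^e-1}\Div_Z(h)$. The details you flag as delicate (trivializations, and upgrading the generic-point condition, which follows from primality of $J$ or directly from the cited criterion) are exactly what the paper delegates to \cite{FedderFPureRat}, \cite{SchwedeCentersOfFPurity} and \cite{SchwedeFAdjunction}.
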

\begin{proof}
The fact that $\Delta$ corresponds to $f$ is simply Fedder's criterion \cite[Lemma 1.6]{FedderFPureRat}.  This $f$ induces a divisor $\Delta_Y$ on $Y$ such that $(X, \Delta)$ is an $F$-pure center of $(Y, \Delta_Y)$.  For \autoref{lem.ComputationalFDif.2}, we use the Fedder-type criterion for $F$-pure centers \cite[Proposition 3.11]{SchwedeCentersOfFPurity}.  For the final statement, we observe that $g_e$ corresponds to the generating homomorphism $\Phi_Z$ of $\Hom(F^e_* S/J, S/J)$, see \cite{SchwedeFAdjunction}.
We see that $f$ corresponds to pre-multiplying the generating homomorphism $\Phi_Z$ by $h$.  The statement about the different follows.
\end{proof}

The following example was first worked out in 2010 by the second author, David Speyer and Chenyang Xu.

\begin{example}
\label{ex.EllipticCurveExample}
Suppose that $f:X \to \Spec k[t] = \bA^1$ is a family of cones over elliptic curves defined by $a = zy^2 - x(x-z)(x-tz)$ with a section $\gamma : \bA^1 \to X$ mapping to the cone points.  Further consider the log resolution $\pi : \tld X \to X$ which is obtained by blowing up the image of $\gamma$ (which we now call $Z$).  Finally note that $X$ is $F$-pure at the generic point of $Z$ by Fedder's criterion \cite{FedderFPureRat}.

Note $X$ is Gorenstein since it is a hypersurface, and set $\Delta = 0$.
It then follows that $Z$ is a log canonical center of $(X, \Delta)$ and hence an $F$-pure center of $(X, \Delta)$.

\begin{claim}\label{clm:F-different-locus}
	The $F$-different $\Delta_Z$ on $Z=V(x, y, z)\subset X$ is supported at exactly those points $(0, 0, 0, s)\in Z$ such that the elliptic curve corresponding to $f^{-1}(s)$ is supersingular, where $f:X\to\Spec k[t]$.
	\end{claim}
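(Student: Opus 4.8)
The plan is to apply the computational recipe of \autoref{lem.ComputationalFDif} with $Y=\bA^4=\Spec k[x,y,z,t]$, $I=(a)$ where $a=y^2z-x(x-z)(x-tz)$ (so $X=\Spec k[x,y,z,t]/(a)$), and $J=(x,y,z)$, so that $Z=V(x,y,z)$. Since $X$ is a hypersurface (hence $K_X\sim 0$) and $Z\cong\bA^1$ is smooth (hence $K_Z\sim 0$), we have $(p^e-1)K_Z\sim 0$ for all $e$ and no twisting is needed. The two inputs to the recipe are immediate: because $I$ is principal, $I^{[p^e]}:I=(a^{p^e-1})$, so $\Delta=0$ corresponds to $f=a^{p^e-1}$; and because $x,y,z$ is a regular sequence, $J^{[p^e]}:J=(x^{p^e},y^{p^e},z^{p^e})+\big((xyz)^{p^e-1}\big)$, so $g_e=(xyz)^{p^e-1}$. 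Thus the $F$-different equals $\tfrac{1}{p^e-1}\Div_Z(h)$ for the unique $h=h(t)\in k[t]$ with $a^{p^e-1}\equiv h\cdot(xyz)^{p^e-1}\pmod{(x^{p^e},y^{p^e},z^{p^e})}$.

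Next I would compute $h(t)$ combinatorially. Since $a$ is homogeneous of degree $3$ in $x,y,z$, the polynomial $a^{p^e-1}$ is homogeneous of degree $3(p^e-1)$ in $x,y,z$, and $(xyz)^{p^e-1}$ is the \emph{only} monomial of that degree all of whose exponents are $\le p^e-1$; hence $h(t)$ is simply the coefficient of $x^{p^e-1}y^{p^e-1}z^{p^e-1}$ in $a^{p^e-1}$. Write $a=y^2z-g$ with $g=x(x-z)(x-tz)$, homogeneous of degree $3$ in $x,z$ alone. Assuming $p$ is odd (the $p=2$ case, where $y^2=(\text{cubic})$ is anyway singular so there is no family of elliptic curves, can be dispatched separately), expanding $a^{p^e-1}$ by the binomial theorem and keeping only the summand with $y$-exponent $p^e-1$ gives $\binom{p^e-1}{m}(-1)^m\,y^{p^e-1}z^m g^m$, where $m:=(p^e-1)/2$. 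Extracting the coefficient of $x^{p^e-1}z^{p^e-1}$ and using $g^m=x^m(x-z)^m(x-tz)^m$ reduces the computation to the coefficient of $x^mz^m$ in $(x-z)^m(x-tz)^m$, which is $(-1)^m P_m(t)$ with
\[
P_m(t):=\sum_{a=0}^{m}\binom{m}{a}^2 t^a.
\]
Therefore $h(t)=\binom{p^e-1}{m}\,P_m(t)$.

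Then I would apply Lucas's theorem twice. The base-$p$ digits of $m=(p^e-1)/2$ are all equal to $n:=(p-1)/2$, so $\binom{p^e-1}{m}\not\equiv 0\pmod p$, and $\binom{m}{a}\equiv\prod_{i}\binom{n}{a_i}\pmod p$ for $a=\sum_i a_i p^i$. Squaring and summing over $a$ (the terms with some $a_i>n$ contribute $0$, and those are exactly the ones with $a>m$) yields, in $\bF_p[t]$,
\[
P_m(t)\equiv\prod_{i=0}^{e-1}P_n\!\big(t^{p^i}\big)=P_n(t)^{(p^e-1)/(p-1)},
\]
using that $P_n\in\bF_p[t]$ so that $P_n(t^{p^i})=P_n(t)^{p^i}$. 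Hence $\Div_Z(h)=\tfrac{p^e-1}{p-1}\Div_Z(P_n(t))$ and the $F$-different is $\Delta_{Z,\Fdiff}=\tfrac{1}{p-1}\Div_Z(P_n(t))$ — in particular independent of $e$, a reassuring consistency check with \autoref{defn.FDifferent}.

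Finally, $P_n(t)=\sum_{a=0}^{(p-1)/2}\binom{(p-1)/2}{a}^2 t^a$ is, up to the unit $(-1)^n$, the Hasse--Deuring invariant of the Legendre curve $E_t\colon y^2=x(x-1)(x-t)$: an identical extraction shows the coefficient of $x^{p-1}$ in $\big(x(x-1)(x-t)\big)^{(p-1)/2}$ is $(-1)^nP_n(t)$. By Deuring's classical criterion, $E_s$ is supersingular exactly when this coefficient vanishes, i.e.\ exactly when $P_n(s)=0$. Thus $\Supp(\Delta_Z)=V(P_n(t))\cap Z=\{(0,0,0,s):E_s\text{ is supersingular}\}$, which is the claim. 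I expect the only real work to be the bookkeeping in the two coefficient extractions and in the Lucas reorganization of $P_m$ as a product; the identification of $P_n$ with the Hasse invariant of the Legendre family is standard (see, e.g., Silverman's books on elliptic curves or Hartshorne, \emph{Algebraic Geometry}, Ch.~IV).
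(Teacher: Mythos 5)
Your proposal is correct and follows essentially the same route as the paper's first proof of \autoref{clm:F-different-locus}: apply \autoref{lem.ComputationalFDif} (Fedder-type computation) to extract $h(t)$ as the coefficient of $(xyz)^{p^e-1}$ in $a^{p^e-1}$ and identify it with the Hasse polynomial of the Legendre family. The only difference is that you carry out explicitly (via the binomial extraction and Lucas's theorem, and for general $e$) the identification that the paper simply asserts, which is a fine and correct elaboration; the paper's second, geometric proof is a genuinely different argument, but that is not what you attempted.
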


 We will prove this claim by two different methods. First by direct computation of polynomials.
\begin{proof}[Proof \#1 of \autoref{clm:F-different-locus}]
We write
\[
a^{p-1} = h(t)x^{p-1}y^{p-1}z^{p-1} + G(x,y,z,t)
\]
where $G(x,y,z,t) \in \langle x^p, y^p, z^p \rangle$.  This gives us an alternate proof that $Z = V(x,y,z)$ is an $F$-pure center again by \cite[Proposition 3.11]{SchwedeCentersOfFPurity}.  We first compute the $F$-different via \autoref{lem.ComputationalFDif}.  Note that the map $F_* \O_X \to \O_X$ corresponding to $\Delta$ lifts to a map $\phi_{\bA^4} : F_* k[x,y,z,t] \to k[x,y,z,t]$ compatible with $X \subseteq \bA^4 = \Spec k[x,y,z,t]$, namely take the generating map $\Phi \in \Hom_{k[x,y,z,t]}(F_* k[x,y,z,t], k[x,y,z,t])$ on $\bA^4$ and pre-multiply by $a^{p-1} = f$.  We notice that $g_1 = x^{p-1}y^{p-1}z^{p-1}$ (here $g_1 = g_e$ is defined as in the statement of \autoref{lem.ComputationalFDif}).  Thus the $F$-different is simply ${1 \over p - 1} \Div_Z(h(t))$ by \autoref{lem.ComputationalFDif}.  Note that $h(t)$ is the Hasse polynomial, which vanishes exactly at those $t$ values where the associated elliptic curve is supersingular.
\end{proof}

\begin{proof}[Proof \#2 of \autoref{clm:F-different-locus}]
Now we study the $F$-different via geometry, in a way which will be similar to what we will do in later sections.  Note that $(X, \Div_X(t - \lambda))$ has a log canonical center at $Q = (x, y, z, t-\lambda)$.   Furthermore, by blowing up $\tld X$ at the inverse image $C_{\lambda}$ of that point $Q$, one obtains a log resolution $\mu : X' \to X$ with two exceptional divisors, $E_1$ dominating $Z$ and $E_2$ dominating $Q$.  Both of these exceptional divisors have discrepancy $-1$ with respect to the pair $(X, 1 \cdot \Div_X(t - \lambda))$.
\begin{claim}
\label{clm.FpureImpliesFSplit}
If $(X,1 \cdot \Div_X(t - \lambda))$ is $F$-pure, the exceptional divisor $E_2$ is $F$-split.
\end{claim}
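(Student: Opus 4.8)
To prove \autoref{clm.FpureImpliesFSplit}, the plan is to transport the Frobenius splitting of the (affine) pair $(X,\Delta_\lambda)$, where $\Delta_\lambda:=\Div_X(t-\lambda)$, onto the $\mu$-contracted divisor $E_2$, using the restriction-to-an-$F$-pure-center construction of \autoref{subsec.Fdifferent}. Since $X$ is a hypersurface it is Gorenstein and $\Delta_\lambda$ is principal, so $K_X+\Delta_\lambda$ is Cartier; thus the hypothesis that $(X,\Delta_\lambda)$ is $F$-pure gives, for suitable $e>0$, a surjection $\phi:=\phi^e_{\Delta_\lambda}:F^e_*\O_X((1-p^e)(K_X+\Delta_\lambda))\twoheadrightarrow\O_X$. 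Because $X$ is affine, this sheaf surjection is also surjective on global sections, so there is $s$ with $\phi(s)=1$. (Only $F$-purity near the point $\mu(E_2)$ is actually needed: one may shrink $X$ to a small affine neighbourhood of that point throughout.)

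Next I would pass to $\mu:X'\to X$. Since $(X,\Delta_\lambda)$ is $F$-pure it is log canonical by \autoref{cor.SubFPureImpliesSubLC}, so in the crepant decomposition $K_{X'}+\Delta'=\mu^*(K_X+\Delta_\lambda)$ the divisor $\Delta'=\mu^{-1}_*\Delta_\lambda+E_1+E_2$ is effective, with $\coeff_{E_1}(\Delta')=\coeff_{E_2}(\Delta')=1$ because both exceptional divisors have discrepancy $-1$. As $K_X+\Delta_\lambda$ is Cartier, $\sL':=\O_{X'}((1-p^e)(K_{X'}+\Delta'))=\mu^*\O_X((1-p^e)(K_X+\Delta_\lambda))$ is a line bundle, and by \autoref{lem.BirationalBehaviorOfMaps} the map $\phi$ extends to $\phi':F^e_*\sL'\to\O_{X'}$ (image in $\O_{X'}$ since $\Delta'\geq 0$) corresponding to the divisor $\Delta'$. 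Because $\mu_*\O_{X'}=\O_X$, the projection formula identifies $H^0(X',F^e_*\sL')$ with $H^0(X,F^e_*\O_X((1-p^e)(K_X+\Delta_\lambda)))$, identifies $H^0(X',\O_{X'})$ with $H^0(X,\O_X)$, and identifies $H^0(\phi')$ with $H^0(\phi)$; hence there is $s'\in H^0(X',F^e_*\sL')$ with $\phi'(s')=1$.

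Now, because $\coeff_{E_2}(\Delta')=1$, the prime divisor $E_2$ is an $F$-pure center of $(X',\Delta')$: conditions (a) and (b) of \autoref{def.FPureCenter} hold at its generic point since the coefficient there is $\leq 1$, and condition (c) is precisely the codimension-one compatibility forced by the coefficient being $1$. Hence \autoref{eq.AdjunctionInducingFDiff} and \autoref{defn.FDifferent} apply: restricting $\phi'$ along $E_2$ produces a map $\bar\phi':F^e_*(\sL'|_{E_2})\to\O_{E_2}$ corresponding to the effective $F$-different $\Theta:=\Delta_{E_2,\Fdiff}\geq 0$, with $(K_{X'}+\Delta')|_{E_2}\sim_\bQ K_{E_2}+\Theta$. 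The key simplification is that $K_{X'}+\Delta'=\mu^*(K_X+\Delta_\lambda)$ is pulled back from $X$ while $\mu(E_2)$ is a point, so $\sL'|_{E_2}\cong\O_{E_2}$ and $\bar\phi'$ may be read as a map $F^e_*\O_{E_2}((1-p^e)(K_{E_2}+\Theta))\to\O_{E_2}$. Restricting $s'$, the section $\bar s:=s'|_{E_2}\in H^0(E_2,F^e_*(\sL'|_{E_2}))$ satisfies $\bar\phi'(\bar s)=(\phi'(s'))|_{E_2}=1$, so $(E_2,\Theta)$ is globally $F$-split; as $\Theta\geq 0$, this forces $E_2$ itself to be globally $F$-split.

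I expect the main obstacle to be the passage from the purely \emph{affine} input — the only available hypothesis is $F$-purity of $(X,\Delta_\lambda)$, which on an affine scheme is merely a sheaf-level splitting — to a genuinely \emph{global} splitting of the \emph{proper} divisor $E_2$; the bridge is that $\mu$ is proper birational with $\mu_*\O_{X'}=\O_X$, so global sections upstairs coincide with those on the affine $X$ and the splitting section survives the pullback and the restriction. Two points require care: (i) checking that $E_2$ really is an $F$-pure center of $(X',\Delta')$, which is exactly where $\coeff_{E_2}(\Delta')=1$ is used; and (ii) the triviality $\sL'|_{E_2}\cong\O_{E_2}$, which is what lets the resulting map be interpreted as a Frobenius splitting of $E_2$ up to the harmless effective boundary $\Theta$.
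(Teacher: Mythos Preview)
Your proof is correct and follows essentially the same approach as the paper: extend the map corresponding to $\Delta_\lambda$ to $X'$ using that all discrepancies are nonpositive (so $\Delta'\geq 0$), use $\mu_*\O_{X'}=\O_X$ together with affineness of $X$ to see that $1$ lies in the image on global sections upstairs, then restrict to $E_2$ via its compatibility (coefficient $1$) and conclude $E_2$ is $F$-split. Your version is more explicit about the projection formula, the triviality of $\sL'|_{E_2}$, and the intermediate $F$-different $\Theta$, but the argument is the same.
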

\begin{proof}
Write $\phi_X : F^e_* \O_X\big( (1-p^e)(K_X + \Div_X(t - \lambda))\big) \to \O_X$ corresponding to the divisor $1 \cdot \Div_X(t - \lambda)$.  This map is surjective by hypothesis.  Since all the discrepancies on $\tld X$ are non-positive, this map extends to a map
\[
\phi_{X'} : F^e_* \O_{X'}\big( \mu^*((1-p^e)(K_{X} + \Div_X(t-\lambda)))\big) \to \O_{X'}.
  \]
Note $\mu_*$ of $\phi_{X'}$ is $\phi_X$ and so $\phi_{X'}$ is also surjective, even on global sections (since $1$ is in the image along the global sections).  Since $E_2$ has discrepancy $-1$, $\phi_{X'}$ is compatible with $E_2$ and so by the argument and diagram of \autoref{eq.AdjunctionInducingFDiff}, we obtain a map
\[
\phi_{E_2} : F^e_* \O_{E_2} \cong \O_{X'}( \mu^*((1-p^e) (K_{X} + \Div_X(t-\lambda))))|_{E_2} \to \O_{E_2}
 \]
by restriction.  This map also has $1$ in its image among global sections, and hence $E_2$ is Frobenius split.  This proves \autoref{clm.FpureImpliesFSplit}.
\end{proof}
The \autoref{clm.FpureImpliesFSplit} implies that the associated elliptic curve is also $F$-split since $E_2$ maps onto the elliptic curve $\rho : E_2 \to C_{\lambda}$ with $\rho_* \O_{E_2} = \O_{C_{\lambda}}$, see \cite{MehtaRamanathanFrobeniusSplittingAndCohomologyVanishing,BrionKumarFrobeniusSplitting}.

By $F$-adjunction $(X, 1\cdot \Div_X(t - \lambda))$ is $F$-pure (in a neighborhood of $Z$) if and only if $(Z, \Delta_Z + \Div_Z(t - \lambda))$ is $F$-pure.
The latter is $F$-pure at the closed point $V(t - \lambda) \subseteq Z$ if and only if $\Delta_Z$ does not have $\Div_Z(t - \lambda)$ in its support.
Putting this together, if $\Delta_Z$ does not have $\Div_Z(t-\lambda)$ in its support, then $(X, 1\cdot \Div_X(t - \lambda))$ is $F$-pure, which implies that $C_{\lambda}$ is $F$-split as we already saw.
In other words if $\lambda$ corresponds to a supersingular elliptic curve $C_{\lambda}$, then $\Delta_Z$ must have $\Div_Z(t - \lambda)$ among its components.

Conversely, suppose that $\lambda$ corresponds to an ordinary elliptic curve $E_{\lambda}$.  The generating map (ie the map corresponding to the dual of Frobenius) on the associated elliptic curve $\psi : F^e_* \omega_{E_{\lambda}} \cong F_* \O_{E_{\lambda}} \to \O_{E_{\lambda}} \cong \omega_{E_{\lambda}}$ is always the map induced by the pair $(X, \Div_X(t - \lambda))$ on $\tld X$ as above.  On the global sections of the elliptic curve, the map $\psi$ sends units to units.  This implies the map associated to $(X, \Div_X(t - \lambda))$, when extended to $\tld X$, has to send units to non-zero elements which restrict to units on $E_{\lambda}$.  Thus back on $X$, units must be sent to elements that are units near $Z$ and the proof of the second proof \autoref{clm:F-different-locus} is complete.
\end{proof}

In conclusion, the $F$-different can exhibit some quite complicated behavior as in this case it picks out the supersingular elliptic curves.
\end{example}

\section{First properties of $F$-pure centers and the $F$-different}

$F$-pure centers and log canonical places above them, can have very nice properties.
Suppose that $(X, \Delta)$ is a sub-log canonical pair in characteristic $p > 0$ with $(p^e - 1)(K_X + \Delta) \sim 0$.  Suppose further that $\pi : X' \to X$ is a proper birational map from a normal variety $X'$ and suppose that $\pi^*(K_X + \Delta) = K_{X'} + \Delta_{X'}$.  Suppose that $E \subseteq X'$ is a prime (usually exceptional) divisor of discrepancy $-1$ dominating an integral  subscheme $W \subseteq X$ such that $(X, \Delta)$ is $F$-pure at the generic point of $W$, which implies that $\Delta$ is effective at the generic point of $W$ (and hence that $W$ is an $F$-pure center and log canonical center).  Let $\phi_{\Delta} : F^e_* \O_X( (1-p^e)(K_X + \Delta)) \cong F^e_* \O_X \to K(X)$ be the map corresponding to $\Delta$ and we abuse notation and, after fixing $K_X$ as an honest divisor, denote by $\phi : F^e_* K(X) \to K(X)$ the induced map on the fraction field.  We use $\phi_{\Delta_{X'}}$ to be the map $F^e_* \O_{X'}( (1-p^e)\pi^* (K_X + \Delta)) \cong F^e_* \O_{X'}( (1-p^e)(K_{X'} + \Delta_{X'})) \to K(X')$ which generically agrees with $\phi$.  We use $\phi_E$ and $\phi_W$ to be the maps induced by restricting $\phi_{\Delta_{X'}}$ and $\phi_{\Delta}$ respectively to $E$ and $W$ as in \autoref{subsec.Fdifferent}.

\begin{proposition}
\label{prop.PInverseOnFields}
With notation as above, the map $\phi$ induces nonzero horizontal maps in the commutative diagram below.
\[
\xymatrix{
F^e_* K(E) \ar[r]^{\phi_E} & K(E) \\
 F^e_* K(W) \ar@{^{(}->}[u]\ar[r]_{\phi_W} & K(W)\ar@{^{(}->}[u]
}
\]
Furthermore the vertical maps are simply induced by $\pi_E : E \to W$.
\end{proposition}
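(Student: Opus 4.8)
The plan is to reduce everything to the generic point $\eta_W$ of $W$, where all the line bundles in sight trivialize and the maps $\phi_W$, $\phi_E$ become honest manipulations of finitely generated field extensions of $K(W) = \kappa(\eta_W)$. First I would localize $X$ at $\eta_W$, which changes none of $K(X)$, $K(X')$, $K(W)$, $K(E)$ nor the maps on fraction fields. After this reduction $W$ is the unique non-generic point of $X$. Since $\pi(E) = W$, the generic point $\eta_E$ of $E$ satisfies $\pi(\eta_E) = \eta_W$, so $\pi$ induces a local homomorphism $\O_{X,\eta_W} \to \O_{X',\eta_E}$ carrying $I_W$ into $\maxm_{\eta_E}$, hence a field embedding $\iota \colon K(W) = \kappa(\eta_W) \hookrightarrow \kappa(\eta_E) = K(E)$. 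By construction this $\iota$ is precisely the embedding induced by the dominant morphism $\pi_E \colon E \to W$, and the vertical maps in the diagram of the proposition are $\iota$ together with $F^e_* \iota$. Thus, once the square is shown to commute with these as its vertical arrows, the ``furthermore'' is immediate, and the only remaining content is that the horizontal arrows are nonzero and that the square commutes.

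Next I would check nonzeroness of the two horizontal maps. For $\phi_W$: by hypothesis $W$ is an $F$-pure center of $(X, \Delta)$ (a log canonical center with $(X, \Delta)$ $F$-pure at $\eta_W$), so part (c) of \autoref{def.FPureCenter} gives $\phi_{\Delta}(F^e_* I_W \cdot \sL) \cdot \O_{X,\eta_W} \subseteq I_W \cdot \O_{X,\eta_W}$, which is exactly what makes $\phi_{\Delta}$ descend along the bottom row of \autoref{eq.AdjunctionInducingFDiff}; $F$-purity at $\eta_W$ then forces the descended map to contain $1$ in its image, so it is nonzero and, being a map out of $F^e_* K(W)$, surjective after passing to $K(W)$. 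For $\phi_E$: since the discrepancy of $E$ is $-1$, its coefficient in $\Delta_{X'}$ is exactly $1$; localizing at $\eta_E$, where $\O_{X',\eta_E}$ is a DVR with uniformizer $t$ cutting out $E$, the DVR lemma following \autoref{lem.BirationalBehaviorOfMaps} identifies $\phi_{\Delta_{X'}}$ there with $\Phi(F^e_*\, u\, t^{p^e - 1}\cdot \blank)$ for a generator $\Phi$ of the relevant $\Hom$-module and a unit $u$; reducing modulo $(t) = I_E$ this is a unit times the generator of $\Hom(F^e_* \sL'|_E, \O_E)$ near $\eta_E$, hence nonzero, and extending to $K(E)$ yields a nonzero, hence surjective, $\phi_E$.

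Finally, commutativity. By the fixed choice of canonical divisors in our Convention, $\phi_{\Delta}$ and $\phi_{\Delta_{X'}}$ induce one and the same map $\phi \colon F^e_* K(X) \to K(X) = K(X')$. Given $\bar a \in K(W) = \kappa(\eta_W)$, lift it to $a \in \O_{X,\eta_W}$; then $\phi(F^e_* a) \in \O_{X,\eta_W}$ (up to the twist by the auxiliary divisor $G$, which we may choose to miss $\eta_W$) and its class in $\kappa(\eta_W)$ is $\phi_W(F^e_* \bar a)$. Regarding the same element $a$ inside $\O_{X',\eta_E}$, the class of $\phi(F^e_* a)$ in $\kappa(\eta_E)$ equals on one hand $\phi_E(F^e_* \iota(\bar a))$ and on the other hand $\iota(\phi_W(F^e_* \bar a))$; equating the two gives $\phi_E \circ F^e_* \iota = \iota \circ \phi_W$, which is exactly the claimed commutative square with vertical arrows $\iota$ and $F^e_* \iota$, i.e.\ the maps induced by $\pi_E$. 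The one delicate point --- the \textbf{main obstacle} --- is bookkeeping the twists by $\sL$ and by $G$ so that, near $\eta_W$ and $\eta_E$, the quotient computations above really are the restriction maps that define $\phi_W$ and $\phi_E$ in \autoref{eq.AdjunctionInducingFDiff}; this amounts to checking that $G$ avoids $\eta_W$ and that its transform on $X'$ avoids $\eta_E$ (using again $\coeff_E \Delta_{X'} = 1$), after which everything is honestly $\O$-linear and the argument is soft.
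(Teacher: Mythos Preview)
Your argument is correct and follows essentially the same route as the paper's proof: both localize near the generic points of $W$ and $E$, use that $\phi_{\Delta}$ and $\phi_{\Delta_{X'}}$ are the \emph{same} map on $K(X) = K(X')$, and then exploit the compatibility of $I_W$ (from the $F$-pure center condition) and of $I_E$ (from $\coeff_E \Delta_{X'} = 1$) together with $\pi(E) = W$ to pass to the quotients and their fraction fields. The paper packages this as a single commutative square of rings $R \hookrightarrow R'$ (choosing an affine chart $U = \Spec R' \subseteq X'$ where $\Delta_{X'} \geq 0$) and its quotient square, whereas you work at the stalks $\O_{X,\eta_W} \to \O_{X',\eta_E}$ and verify commutativity by an explicit lift-and-reduce; these are the same computation in different clothing, and your separate DVR argument for the nonvanishing of $\phi_E$ is a slightly more hands-on substitute for the paper's observation that $\Delta_E \geq 0$ on $U \cap E$.
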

\begin{proof}
We may assume that $X = \Spec R$ is affine and that $W = V(I) = \Spec R/I$.  We may also assume that $\Delta \geq 0$.
Choose an open affine neighborhood $U \subseteq X'$ containing the generic point of $E$ where $\Delta_{X'}|_U \geq 0$ and $(p^e - 1)(K_{X'} + \Delta_{X'})|_U \sim 0$.  Note then that the different $\Delta_E$, satisfying $(K_{X'} + \Delta_{X'})|_E \sim_{\bQ} K_E + \Delta_E$, is effective (that is, $\Delta_E|_{U \cap E} \geq 0$).  We may write $U = \Spec R'$ and $E = V(J) = \Spec(R'/J)$.

We then have a map $\alpha : R \hookrightarrow R'$ with $R \cap J = I$.  Let $\phi_R : F^e_* R \to R$ be the map obtained by taking global sections of $\phi_{\Delta}$.  Now, since $R$ and $R'$ are birational, and we have $\Delta_{X'}|_U \geq 0$, we obtain a map $\phi_{R'} : F^e_* R' \to R'$, corresponding to $\Delta_{X'}$, such that the following diagram commutes:
\[
\xymatrix{
F^e_* R \ar@{^{(}->}[d]_{F^e_* \alpha} \ar[r]^{\phi_R} & R \ar@{^{(}->}[d]^{\alpha} \\
F^e_* R' \ar[r]_{\phi_{R'}} & R'.
}
\]
Since $E$ has discrepancy $-1$ in $X'$, i.e., $\coeff_E \Delta_{X'} = 1$, we see that $J$ is compatible with $\phi_{R'}$, and we already know that $I$ is compatible with $\phi_R$, although it also follows from the diagram above.  Since $J \cap R = I$ (since $\pi(E) = W$), we have the induced diagram
\[
\xymatrix{
F^e_* R/I \ar@{^{(}->}[d]  \ar[r]^{\overline\phi_R} & R/I \ar@{^{(}->}[d] \\
F^e_* R'/J \ar[r]_{\overline\phi_{R'}} & R'/J.
}
\]
Taking fields of fractions gives us exactly the claimed diagram.
\end{proof}

We obtain the following corollary.

\begin{corollary}
\label{cor.CenterToPushfwdSeparable}
With notation and assumptions as in and above \autoref{prop.PInverseOnFields}, every element of $K(E)$ which is algebraic over $K(W)$ is separable over $K(W)$, in particular $\pi_* \O_E \supseteq \O_W$ is separable.  Furthermore, if $K(W) \subseteq L \subseteq K(E)$ and $K(W) \subseteq L$ is algebraic and separable, then $\phi_E$ restricts to a map $\phi_L : F^e_* L \to L$.
\end{corollary}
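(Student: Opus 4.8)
The plan is to play two standard facts about nonzero $p^{-e}$-linear maps against each other: such a map always propagates across a finite \emph{separable} layer, but it is killed upon restriction across a nontrivial \emph{purely inseparable} layer. Combined with the self-composition $\phi^{ne} = \phi\circ(F^e_*\phi)\circ\cdots\circ(F^{(n-1)e}_*\phi)$, this will force every element of $K(E)$ that is algebraic over $K(W)$ to be separable. Write $K := K(W)$. By \autoref{prop.PInverseOnFields} we have at our disposal that $\phi_W = \phi_E|_{F^e_* K}$ is a \emph{nonzero} $p^{-e}$-linear map with image in $K\subseteq K(E)$, and that $\phi_E\colon F^e_* K(E)\to K(E)$ is $K(E)$-linear.

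First I would handle finite separable subextensions. For $K\subseteq K'\subseteq K(E)$ with $K'/K$ finite separable, the identity $K' = (K')^{p^e}\cdot K$ holds (Mac Lane's linear-disjointness criterion; cf. the treatment of finite separable base extension in \cite{SchwedeTuckerTestIdealFiniteMaps} and \cite[Section 7]{BlickleSchwedeSurveyPMinusE}), so every element of $F^e_* K'$ is a $K'$-linear combination of elements $F^e_* a$ with $a\in K$. Since $\phi_E(F^e_* a)=\phi_W(F^e_* a)\in K$ and $\phi_E$ is $K(E)$-linear, we get $\phi_E(F^e_* K') = K'\cdot\phi_W(F^e_* K)\subseteq K'$; thus $\phi_E$ restricts to $\phi_{K'}\colon F^e_* K'\to K'$, and $\phi_{K'}\neq 0$ because its own restriction to $F^e_* K$ is $\phi_W\neq 0$. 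Passing to the directed union over all such $K'$ proves the ``furthermore'' clause: an algebraic separable intermediate field $L$ is $\bigcup L_i$ over its finite subextensions, $F^e_* L = \bigcup F^e_* L_i$, and hence $\phi_E(F^e_* L)\subseteq L$.

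Next comes the separability claim. Suppose $\alpha\in K(E)$ is algebraic over $K$ but not separable; its minimal polynomial then has the form $g(x^{p^r})$ with $g$ separable and $r\geq 1$, so $\beta := \alpha^{p^r}$ is separable over $K$ and $K' := K(\beta)$ is a finite separable subextension of $K(E)/K$. By the previous paragraph $\phi_E$ restricts to a nonzero $\phi_{K'}\colon F^e_* K'\to K'$. Moreover $\alpha$ is purely inseparable over $K'$ (it is a root of $x^{p^r}-\beta$) and $\alpha\notin K'$, since $\alpha\in K'$ would make $K(\alpha)=K(\beta)$ separable over $K$, contrary to assumption. This reduces everything to showing such an $\alpha$ cannot exist.

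Finally I would derive the contradiction — this is the real content. Choose $n$ with $ne\geq r$, so $\alpha^{p^{ne}}\in K'$. The $p^{-ne}$-linear map $\phi_E^{ne}$ restricts on $F^{ne}_* K'$ to $\phi_{K'}^{ne}$, which is nonzero: a nonzero $p^{-e}$-linear map into a field is surjective and $F^e_*$ preserves surjectivity, so the composite stays surjective. For any $a\in K'$ put $c_a := \phi_E^{ne}(F^{ne}_* a) = \phi_{K'}^{ne}(F^{ne}_* a)\in K'$. Because $\alpha^{p^{ne}}\in K'$, the element $\alpha\cdot(F^{ne}_* a) = F^{ne}_*(\alpha^{p^{ne}}a)$ lies in $F^{ne}_* K'$, so $\phi_E^{ne}(\alpha\cdot F^{ne}_* a)\in K'$; on the other hand $K(E)$-linearity gives $\phi_E^{ne}(\alpha\cdot F^{ne}_* a) = \alpha\,c_a$. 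Hence $\alpha\,c_a\in K'$ for every $a$, and since $\alpha\notin K'$ this forces $c_a=0$ for all $a\in K'$, i.e. $\phi_{K'}^{ne}=0$, a contradiction. Therefore every element of $K(E)$ algebraic over $K(W)$ is separable, so the algebraic closure of $K(W)$ in $K(E)$ is a separable extension of $K(W)$ — this is the sense in which $\pi_*\O_E\supseteq\O_W$ is separable. The only subtlety to watch in this last step is the twisted $K(E)$-module structure on $F^{ne}_*(-)$, under which $\alpha$ acts by $F^{ne}_*(\alpha^{p^{ne}}\cdot)$; the rest is formal, and Steps 1 and 2 are bookkeeping.
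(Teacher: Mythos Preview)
Your proof is correct. For the ``furthermore'' clause your argument is essentially identical to the paper's: both use that a separable algebraic extension $L/K(W)$ satisfies $L^{1/p^e} = L\cdot K(W)^{1/p^e}$ (you phrase it as $K' = (K')^{p^e}\cdot K$ and pass to a directed union, the paper writes $l^{1/p^e} = \sum l_i w_i^{1/p^e}$ directly), and then $K(E)$-linearity of $\phi_E$ together with $\phi_E|_{F^e_*K(W)} = \phi_W$ does the rest.

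For the separability claim the two proofs differ in presentation rather than content. The paper simply cites \cite[Example 5.1]{SchwedeTuckerTestIdealFiniteMaps} and moves on; you supply a self-contained argument. Your key idea---split off the separable part $K' = K(\alpha^{p^r})$, observe $\phi_E$ restricts to a nonzero map on $K'$ by the separable case already handled, then use an iterate $\phi^{ne}$ with $ne\geq r$ so that $\alpha^{p^{ne}}\in K'$ and derive $\alpha\cdot\phi_{K'}^{ne}(F^{ne}_*K')\subseteq K'$ forcing $\phi_{K'}^{ne}=0$---is exactly the mechanism underlying the cited example (a nonzero $p^{-e}$-linear map on a field cannot survive restriction across a nontrivial purely inseparable layer). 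The use of the iterate to absorb arbitrary inseparability exponent $p^r$ is a clean touch. What you gain is that your writeup is self-contained; what the paper gains is brevity by outsourcing a standard fact.
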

\begin{proof}
For the first part, we simply refer to \cite[Example 5.1]{SchwedeTuckerTestIdealFiniteMaps}.

The second part is essentially the argument given in \cite[Proposition 5.2]{SchwedeTuckerTestIdealFiniteMaps} in a slightly different context.  Note we use the ${1/p^e}$ notation instead of the $F^e_*$ notation in what follows.  We first consider $\phi_E|_{F^e_* L}$.  Since $F^e_* L = L^{1/p^e} = L \otimes K(W)^{1/p^ee} = L K(W)^{1/p^e} \subseteq K(E)^{1/p^e}$, for any arbitrary $l^{1/p^e} \in L^{1/p^e}$, write $l^{1/p^e}= \sum_i l_i w_i^{1/p^e}$ for some $l_i \in L \subseteq K(E)$ and $w_i \in K(W)$.  Then $\phi_E(l^{1/p^e})  = \phi_E(\sum_i l_i w_i^{1/p^e}) = \sum_i l_i \phi_E(w_i^{1/p^e})$.  But $\phi_E(w_i^{1/p^e}) = \phi_W(w_i^{1/p^e}) \in K(W)$ thus $\phi_E(l^{1/p^e}) \in L$.  Hence $\phi_L := \phi_E|_L : L^{1/p^e} \to L$ is an extension of $\phi_W$ and extends to $\phi_E$.
\end{proof}

Rephrasing \autoref{prop.PInverseOnFields} in the non-local case, we obtain the following result.

\begin{corollary}
\label{cor.PInverseExtendsOnSheaves}
Suppose that $(X, \Delta)$ is a pair in characteristic $p > 0$ with $(p^e - 1)(K_X + \Delta)$ Cartier.  Suppose further that $\pi : X' \to X$ is a proper birational map from a normal variety $X'$ and that $\pi^*(K_X + \Delta) = K_{X'} + \Delta_{X'}$.  Assume that $E \subseteq X'$ is a normal prime exceptional divisor of discrepancy $-1$ dominating an integral subscheme $W \subseteq X$ such that $(X, \Delta)$ is $F$-pure, and hence log canonical, at the generic point of $W$.  Note that this implies that $\Delta$ is effective at the generic point of $W$.  Suppose that $\pi|_E : E \to W$ can be written as $E \xrightarrow{\rho} W' \to W$ where $W' \to W$ is birational and $W'$ is normal.
Then there is a commutative diagram:
\[
\xymatrix{
F^e_* \O_E( (1-p^e)(K_E + \Delta_{E})) \ar[r]^-{\phi_E} & K(E) \\
 F^e_* \rho^{-1} \O_{W'}( (1-p^e)(K_{W'} + \Delta_{W', \Fdiff})) \ar@{^{(}->}[u]\ar[r]_-{\phi_{W'}} & \rho^{-1} K(W)\ar@{^{(}->}[u]
}
\]
Here $\Delta_E$ is the different of $K_{X'} + \Delta_{X'}$ along $E$ and $\Delta_{W', \Fdiff}$ is the $F$-different of $(X, \Delta)$ along $W'$ which maps to the the $F$-pure center $W$ (recall that the $F$-different is a {\bf b}-divisor by \autoref{defn.FDifferent} and \autoref{rem.FDiffBDivisor}).
\end{corollary}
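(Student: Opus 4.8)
The plan is to deduce this directly from \autoref{prop.PInverseOnFields}. That proposition already furnishes the commutative square of the maps $\phi_E$ and $\phi_W = \phi_{W'}$ at the level of function fields, with vertical arrows induced by $\rho$; what remains is (i) to identify the source sheaves appearing in the statement with the subsheaves of $F^e_*\underline{K(E)}$ on which these function-field maps are actually defined, (ii) to recognize the left vertical arrow as a tautological one, and (iii) to note that (i)+(ii) make the whole square commute, because commutativity at the generic point of $E$ already holds.

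For (i) I would chase line bundles. Since $\pi^*(K_X+\Delta) = K_{X'}+\Delta_{X'}$ and $(1-p^e)(K_X+\Delta)$ is Cartier, $\O_{X'}((1-p^e)(K_{X'}+\Delta_{X'})) = \pi^*\sL$ where $\sL = \O_X((1-p^e)(K_X+\Delta))$. By construction $\phi_E$ is the restriction of $\phi_{\Delta_{X'}}$ along the coefficient-$1$ divisor $E$, so — which is exactly how the different $\Delta_E$ is produced here — its source is $F^e_*\O_E((1-p^e)(K_E+\Delta_E)) = F^e_*((\pi^*\sL)|_E)$. Since $\pi|_E$ factors as $E \xrightarrow{\rho} W' \xrightarrow{\kappa} W \hookrightarrow X$, this equals $F^e_*\rho^*\kappa^*(\sL|_W)$. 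On the other side, by \autoref{defn.FDifferent} the source of $\phi_{W'}$ is $F^e_*\kappa^*(\sL|_W)$ and $\Delta_{W',\Fdiff}$ is by definition the divisor attached to $\phi_{W'}$, whence $\O_{W'}((1-p^e)(K_{W'}+\Delta_{W',\Fdiff})) = \kappa^*(\sL|_W)$ for the canonical divisors fixed by our conventions. Therefore $F^e_*\O_E((1-p^e)(K_E+\Delta_E)) = F^e_*\rho^*\O_{W'}((1-p^e)(K_{W'}+\Delta_{W',\Fdiff}))$, and for (ii) the left vertical arrow is simply the canonical map $F^e_*\rho^{-1}\sM \to F^e_*\rho^*\sM$ for the invertible sheaf $\sM = \O_{W'}((1-p^e)(K_{W'}+\Delta_{W',\Fdiff}))$; it is injective because $\rho$ is dominant, so $\rho^{-1}\O_{W'} \hookrightarrow \O_E$, and tensoring an invertible $\rho^{-1}\O_{W'}$-module along an injection of integral domains stays injective.

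The rest is quick. The right vertical arrow is the inclusion of constant sheaves: $W' \to W$ is birational so $K(W') = K(W)$, and $\rho$ dominant gives $K(W) = K(W') \subseteq K(E)$, while $\rho^{-1}$ of the constant sheaf $\underline{K(W)}$ is again constant on $E$. For commutativity, \autoref{prop.PInverseOnFields} says the extension $\phi_E : F^e_* K(E) \to K(E)$ restricts to $\phi_W$, hence (as $\phi_{W'}$ is by definition the extension of $\phi_W$ to $K(W')=K(W)$) also to $\phi_{W'}$; since every object in the square is a subsheaf of $F^e_*\underline{K(E)}$ or of $\underline{K(E)}$ and every arrow is the restriction of its function-field counterpart, commutativity at the generic point of $E$ propagates to the whole square.

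The step I expect to be the real obstacle is the compatibility hidden in (i): to make the square literally commute one needs $(1-p^e)(K_E+\Delta_E) = (1-p^e)\rho^*(K_{W'}+\Delta_{W',\Fdiff})$ as honest divisors — equivalently, that the isomorphisms above identify the two source sheaves as the \emph{same} subsheaf of $\underline{K(E)}$ — which requires tracking the canonical {\bf b}-divisor conventions of \autoref{subsec.MapsAndQDivisors} together with the descent statement of \autoref{rem.FDiffBDivisor} through the adjunction $\phi_{\Delta_{X'}}\rightsquigarrow\phi_E$ and through the restriction-then-extension $\phi_{\Delta}\rightsquigarrow\phi_W\rightsquigarrow\phi_{W'}$. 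A way to sidestep this bookkeeping is to instead rerun the affine-local argument proving \autoref{prop.PInverseOnFields} — with $X = \Spec R$, $U = \Spec R' \subseteq X'$ an affine neighborhood of the generic point of $E$ on which $\Delta_{X'}|_U \geq 0$, $E = V(J)$, $W = V(I)$ and $J \cap R = I$ — but now retaining the module structures on the compatible pair $F^e_* R/I \hookrightarrow F^e_* R'/J$, then passing to the normalization $W'$ of the relevant component and reflexifying; the sheaf inclusion in the statement then drops out directly.
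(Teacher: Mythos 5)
Your argument is correct and is essentially the paper's own proof: the paper disposes of the corollary by calling it a non-local rephrasing of \autoref{prop.PInverseOnFields} together with the observation that $\rho^*(p^e-1)(K_{W'}+\Delta_{W',\Fdiff}) \sim (p^e-1)(K_E+\Delta_E)$ ``by construction,'' which is exactly the line-bundle identification you carry out in step (i). The ``obstacle'' you flag at the end is not a genuine one, since the paper's convention of fixing the canonical ${\bf b}$-divisor and inducing both $\phi_E$ and $\phi_{W'}$ from the same $\phi_\Delta$ makes your identification of the two source sheaves inside $K(E)$ automatic, so the alternative affine-local rerun is unnecessary.
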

\begin{proof}
This is just a non-local rephrasing of \autoref{prop.PInverseOnFields}.  Note
\[
\rho^* (p^e-1)(K_{W'} + \Delta_{W', \Fdiff}) \sim (p^e - 1)(K_E + \Delta_E)
\]
by construction.
\end{proof}

We immediately obtain the following.

\begin{corollary}
\label{cor.FDiffGeqDivDiff}
Suppose $(X, \Delta)$ is a pair such that $K_X + \Delta$ is $\bQ$-Cartier with index not divisible by $p$.  If $W \subseteq X$ is a log canonical center that is also an $F$-pure center of $(X, \Delta)$, then ${\bf \Delta}_{W, E, \Div} \leq {\bf \Delta}_{W, \Fdiff}$ as {\bf b}-divisors (here $E$ can be taken as any $E \to W$ such that $E$ has discrepancy $-1$).
In other words, for any proper and birational $W' \to W$ with $W'$ normal and $E \to W$ factoring through $W'$, we have that $\Delta_{W', \Div} \leq \Delta_{W', \Fdiff}$.  Furthermore, $\Delta_{W', \Div}$ is an honest $\bR$-divisor.
\end{corollary}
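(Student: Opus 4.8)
The strategy is to obtain the statement purely formally, combining \autoref{cor.PInverseExtendsOnSheaves} with \autoref{lem.DivisorialPartOfDifferent}: the former produces exactly the hypothesis that the latter requires. Fix a normal proper birational $\kappa : W' \to W$ through which the chosen discrepancy $-1$ divisor $E$ factors, and (by the discussion preceding \autoref{defn.DivisorialPartOfTheDifferent}, after passing to a higher model of $X$ and normalizing) choose a proper birational $\pi : X' \to X$ with $X'$ normal, $E \subseteq X'$ a normal prime divisor, $\pi(E) = W$, $\coeff_E(\Delta_{X'}) = 1$ where $K_{X'}+\Delta_{X'} = \pi^*(K_X+\Delta)$, and $\pi|_E$ factoring as $E \xrightarrow{\rho} W' \xrightarrow{\kappa} W$. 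Let $\Delta_E$ denote the different of $K_{X'}+\Delta_{X'}$ along $E$. Since $W$ is a log canonical center, $(X,\Delta)$ is log canonical near the generic point of $W$; crepancy of $\pi$ then makes $(X',\Delta_{X'})$ sub-log canonical near the generic point of $E$, and since $E$ has coefficient $1$ in $\Delta_{X'}$, adjunction makes $(E,\Delta_E)$ sub-log canonical over a neighborhood of the generic point of $W'$ --- exactly the condition under which \autoref{dfn:div-part}, hence \autoref{defn.DivisorialPartOfTheDifferent}, applies to $\rho : E \to W'$.

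Because $W$ is also an $F$-pure center, $(X,\Delta)$ is $F$-pure at the generic point of $W$, so \autoref{cor.PInverseExtendsOnSheaves} applies to this $E$ and furnishes the commutative square relating $\phi_E : F^e_*\O_E((1-p^e)(K_E+\Delta_E)) \to K(E)$ to the induced map $\phi_{W'} : F^e_*\rho^{-1}\O_{W'}((1-p^e)(K_{W'}+\Delta_{W',\Fdiff})) \to \rho^{-1}K(W)$, together with the linear equivalence $\rho^*(p^e-1)(K_{W'}+\Delta_{W',\Fdiff}) \sim (p^e-1)(K_E+\Delta_E)$. I would then invoke \autoref{lem.DivisorialPartOfDifferent} for the morphism $f = \rho : E \to W'$, with $\Delta_E$ in the role of the divisor upstairs, $L = K_{W'}+\Delta_{W',\Fdiff}$, and $\phi_E$ in the role of the map corresponding to $\Delta_E$. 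The needed hypotheses hold: $(p^e-1)(K_E+\Delta_E)$ is Cartier because it is $\pi^*$ of the Cartier divisor $(p^e-1)(K_X+\Delta)$; $(p^e-1)L$ is Cartier because $\O_{W'}((1-p^e)(K_{W'}+\Delta_{W',\Fdiff}))$ is, by construction of the $F$-different in \autoref{defn.FDifferent}, the line bundle $\kappa^*(\sL|_W)$; the displayed linear equivalence is the required $(1-p^e)(K_E+\Delta_E)\sim (1-p^e)\rho^*L$; and the commutativity together with nonvanishing of the top row of the square from \autoref{cor.PInverseExtendsOnSheaves} is exactly condition \eqref{eq.MapSendsKZtoKZ}, that $\phi_E$ carries $F^e_*\rho^{-1}\O_{W'}((1-p^e)L)$ by a nonzero map into $\rho^{-1}K(W)$.

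\autoref{lem.DivisorialPartOfDifferent} then delivers both conclusions simultaneously: the divisorial part $\Delta_{\div}$ of the adjunction for $\rho : E \to W'$ is an honest $\bR$-divisor, and $\Delta_{\div} \leq \Delta_{W'}$, where $\Delta_{W'}$ is the divisor on $W'$ corresponding to the induced map $\phi_{W'}$. By \autoref{defn.DivisorialPartOfTheDifferent} we have $\Delta_{\div} = \Delta_{W',E,\Div} = \Delta_{W',\Div}$, and by \autoref{defn.FDifferent} the divisor $\Delta_{W'}$ corresponding to $\phi_{W'}$ is the $F$-different $\Delta_{W',\Fdiff}$; hence $\Delta_{W',\Div}$ is an honest $\bR$-divisor and $\Delta_{W',\Div} \leq \Delta_{W',\Fdiff}$. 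Since $W'$ was an arbitrary normal model through which $E\to W$ factors, this is the asserted inequality ${\bf \Delta}_{W,E,\Div} \leq {\bf \Delta}_{W,\Fdiff}$ of {\bf b}-divisors (both sides are compatible under pushforward by \autoref{rem.DivPartBDivisor} and \autoref{rem.FDiffBDivisor}, and the pushforward of an effective divisor is effective). I expect the only real friction in writing this up to be the bookkeeping already flagged --- arranging a model $X'$ in which $E$ is a normal divisor and $\pi|_E$ factors through the prescribed $W'$, and confirming that $(E,\Delta_E)$ is sub-log canonical over a neighborhood of the generic point of $W'$ so that \autoref{dfn:div-part} genuinely applies --- which is routine given \autoref{cor.PInverseExtendsOnSheaves} and the canonical {\bf b}-divisor conventions fixed at the outset.
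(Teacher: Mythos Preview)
Your proposal is correct and follows essentially the same route as the paper: apply \autoref{cor.PInverseExtendsOnSheaves} to obtain the commutative square relating $\phi_E$ and $\phi_{W'}$, observe that this is precisely the hypothesis \eqref{eq.MapSendsKZtoKZ} of \autoref{lem.DivisorialPartOfDifferent}, and conclude both the inequality and the finiteness of $\Delta_{W',\Div}$ from that lemma. Your write-up is in fact more careful than the paper's about verifying the side conditions (sub-log canonicity of $(E,\Delta_E)$ over the generic point, Cartier-ness of $(p^e-1)L$, nonvanishing of the induced map); the only cosmetic slip is that the nonzero map you need is the \emph{bottom} row $\phi_{W'}$ of the square in \autoref{cor.PInverseExtendsOnSheaves}, not the top.
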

\begin{proof}
We fix $\pi_E : E \to W$ factoring through $W'$, where $E \subseteq X'$ is some normal prime divisor with discrepancy $-1$.  We apply \autoref{cor.PInverseExtendsOnSheaves} and obtain the displayed diagram therein. But this implies that the hypotheses of \autoref{lem.DivisorialPartOfDifferent} are satisfied.  We notice that $\Delta_{W', \Fdiff}$ is induced by the map labeled $\phi_{W'}$ in the same diagram.  We conclude that $\Delta_{W', \Div} \leq \Delta_{W', \Fdiff}$ and also that $\Delta_{W', \Div}$ is an honest $\bR$-divisor by \autoref{lem.DivisorialPartOfDifferent}.
\end{proof}

It would be natural to show that $\O_W \subseteq \pi_* \O_E$ is birational.  Using standard results from the theory of Frobenius splittings, we can do that under the following situation.\footnote{Which is frequently obtainable if we have access to the minimal model program.}
\begin{lemma}
With notation and assumptions as in \autoref{cor.PInverseExtendsOnSheaves}, assume additionally that the following holds:
\begin{itemize}
\item[(a)] There is a neighborhood $U \subseteq X$ of the generic point of $W$ such that $\Delta_{X'}|_{\pi^{-1}(U)} \geq 0$.
\item[(b)]  $\pi$ is an isomorphism over $X \setminus W$.
\end{itemize}
Then $\O_W \subseteq \pi_* \O_E$ is birational.
\end{lemma}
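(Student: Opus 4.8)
The plan is to reduce the statement to the local ring of $X$ at the generic point $\eta_W$ of $W$, to show that the splitting attached to $\Delta_{X'}$ is \emph{surjective} on all of $X'$ near $W$, to restrict it compatibly to $E$, and then to push the resulting compatible splitting down to $W$ via the standard machinery of Frobenius splittings. Since the conclusion only involves $\eta_W$, one first shrinks $X$ to an affine open neighbourhood of $\eta_W$; using (a) together with the $F$-purity of $(X,\Delta)$ at $\eta_W$ we may assume that $X$ is affine, that $\Delta\ge 0$ with $(X,\Delta)$ $F$-pure (so the map $\phi_{\Delta}\colon F^e_*\sL\to\O_X$ of \autoref{def.FPure} is surjective, where $\sL=\O_X((1-p^e)(K_X+\Delta))$), that $\Delta_{X'}\ge 0$, and---by (b)---that $\pi$ is an isomorphism over $X\setminus W$.

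The first real step is to prove that $\phi_{\Delta_{X'}}\colon F^e_*\sL'\to\O_{X'}$ (the map attached to $\Delta_{X'}$, with $\sL'=\O_{X'}((1-p^e)\pi^*(K_X+\Delta))$) is surjective. As $\Delta_{X'}\ge 0$, this map lands in $\O_{X'}$, so its image is an ideal sheaf $\fra\subseteq\O_{X'}$. Over $X'\setminus\pi^{-1}(W)$, $\pi$ is an isomorphism and $\phi_{\Delta_{X'}}=\phi_{\Delta}$ is surjective, so $V(\fra)\subseteq\pi^{-1}(W)$. By the projection formula and $\pi_*\O_{X'}=\O_X$ one has $\pi_*(F^e_*\sL')=F^e_*\sL$ and $\pi_*\phi_{\Delta_{X'}}=\phi_{\Delta}$; factoring $\phi_{\Delta_{X'}}$ as $F^e_*\sL'\twoheadrightarrow\fra\hookrightarrow\O_{X'}$ and applying $\pi_*$ then forces $\pi_*\fra=\O_X$. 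If $\fra\ne\O_{X'}$, choose $x'\in V(\fra)$ and set $x=\pi(x')$: since $\O_{X,x}\to\O_{X',x'}$ is a local homomorphism and $\fra_{x'}\subseteq\maxm_{x'}$, every germ $f\in(\pi_*\fra)_x\subseteq(\pi_*\O_{X'})_x=\O_{X,x}$ satisfies $(\pi^*f)_{x'}\in\fra_{x'}\subseteq\maxm_{x'}$, hence $f\in\maxm_x$; so $(\pi_*\fra)_x\subseteq\maxm_x$, contradicting $\pi_*\fra=\O_X$. Hence $\fra=\O_{X'}$.

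Since $\coeff_E(\Delta_{X'})=1$, the ideal sheaf $\O_{X'}(-E)$ is $\phi_{\Delta_{X'}}$-compatible, so restricting the now-surjective $\phi_{\Delta_{X'}}$ to $E$ (as in \autoref{eq.AdjunctionInducingFDiff}) yields a \emph{surjection} $\phi_E\colon F^e_*(\sL'|_E)\twoheadrightarrow\O_E$; thus $E$ is compatibly Frobenius split inside the globally $F$-split $X'$. The crux is then to deduce that the natural map $\O_X=\pi_*\O_{X'}\to\pi_*\O_E$ is surjective over a neighbourhood of $\eta_W$. This is exactly what the standard theory of compatibly split subvarieties under proper pushforward provides (\cite{MehtaRamanathanFrobeniusSplittingAndCohomologyVanishing,BrionKumarFrobeniusSplitting}); concretely, for an $F$-split ambient scheme the compatible splitting is inherited by the higher direct images and makes $R^1\pi_*\O_{X'}(-E)$ vanish in a neighbourhood of $\eta_W$, which is equivalent to the desired surjectivity.

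Granting this surjectivity, the conclusion is immediate. As $\pi$ is proper, $\pi_*\O_E$ is a coherent, hence finite, sheaf of $\O_W$-algebras, and its stalk at $\eta_W$ is $H^0(E_\eta,\O_{E_\eta})$, where $E_\eta$, being a localization of the integral scheme $E$, is integral; this stalk is therefore a domain, finite over the field $K(W)$, hence itself a field $L$. The surjection $\O_{X,\eta_W}\twoheadrightarrow L$ onto a field has maximal kernel, so $L$ equals the residue field $K(W)=\O_{W,\eta_W}$. Thus $\O_W\hookrightarrow\pi_*\O_E$ is an isomorphism at $\eta_W$, which is precisely the asserted birationality. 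I expect the genuine difficulty to reside entirely in the third paragraph---extracting the surjectivity of $\O_X\to\pi_*\O_E$ (equivalently $R^1\pi_*\O_{X'}(-E)=0$) near $\eta_W$ from the compatible $F$-splitting---while the remaining steps, together with the mild use of \autoref{cor.CenterToPushfwdSeparable} and \autoref{cor.SubFPureImpliesSubLC} to control the extension $K(W)\subseteq K(E)$, are essentially bookkeeping.
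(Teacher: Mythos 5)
Your proposal is correct and follows essentially the same route as the paper: localize (or shrink) near the generic point of $W$, use (a) to turn the map attached to $\Delta$ into an ($e$-iterated) Frobenius splitting of $X'$ compatible with $E$, invoke the Mehta--van der Kallen theorem (\cite[1.2.12 Theorem]{BrionKumarFrobeniusSplitting}, \cite{MehtaVanDerKallenGRVanishing}) to obtain $R^1\pi_*\O_{X'}(-E)=0$ and hence surjectivity of $\O_X=\pi_*\O_{X'}\to\pi_*\O_E$, and conclude via the factorization $\O_X\to\O_W\hookrightarrow\pi_*\O_E$. The one caution is that this vanishing is not formally ``inherited'' by higher direct images from the compatible splitting alone, but is exactly the Grauert--Riemenschneider-type theorem just cited, whose hypotheses are where (b) and the localization at the generic point of $W$ enter; granting that citation, your argument matches the paper's, and the extra step showing $\phi_{\Delta_{X'}}$ is surjective as a sheaf map is not actually needed.
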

\begin{proof}
We localize the entire setup at the generic point of $W$ and hence work locally over the base.  Our hypotheses in (a) can then be translated into assuming that $X'$ is $e$-iterated Frobenius split compatibly with $E$.  This implies that $R^1 \pi_* \O_{X'}(-E) = 0$ by \cite[1.2.12 Theorem]{BrionKumarFrobeniusSplitting}, see \cite{MehtaVanDerKallenGRVanishing}.  Hence $\pi_* \O_{X'} \to \pi_* \O_E$ surjects.  But we know $\pi_* \O_{X'} = \O_X$ since $X$ is normal and so $\pi_* \O_E$ is a quotient of $\O_X$.  Now certainly $I_W$ annihilates $\O_E$ and so we have the factorization
\[
\O_X \to \O_W \hookrightarrow \pi_* \O_E.
\]
The composition is surjective and hence $\O_W \to \pi_* \O_E$ is an isomorphism.  Thus unlocalizing we conclude that $\O_W \subseteq \pi_* \O_E$ is birational as desired.
\end{proof}

\section{A canonical bundle formula and the local structure of the $F$-different}
\label{sec.CanBundleFormula}

The first few results of this section work under the following notation and hypotheses.

\begin{setting}
\label{set.CanBundFormula}
Assume that $\pi : E \to W$ is a proper dominant map between normal $F$-finite integral schemes of characteristic $p > 0$ and that $\pi_* \O_E = \O_W$. Suppose that $\Delta_E$ is a $\bQ$-divisor on $E$ and that $\O_E((1 - p^e)(K_E + \Delta_E)) \cong \pi^* \sL_W$ for some line bundle $\sL_W$, hence $(p^e - 1)(K_E + \Delta_E)$ is Cartier and linearly equivalent to a Cartier divisor pulled back from $W$.
\end{setting}

\begin{theorem}[Canonical bundle formula]
\label{thm.CanonicalBundleFormla}
With notation as in \autoref{set.CanBundFormula}, suppose that every horizontal component of $\Delta_E$ (those components which dominate $W$) is effective.  If the generic fiber $(E_{\eta}, \Delta_{E,\eta})$ is globally $F$-split relative to $\pi$, then there exists a canonically determined divisor $\Delta_W$ on $W$ such that $\pi^*(K_W + \Delta_W) \sim_{\bQ} K_E + \Delta_E$.
\end{theorem}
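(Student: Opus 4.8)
The plan is to package everything in \autoref{set.CanBundFormula} into a single $p^{-e}$-linear map on $E$, push it forward along $\pi$ --- the move lying at the origin of Frobenius-splitting theory --- and recover $\Delta_W$ as the divisor attached to the pushed-forward map on $W$ in the sense of \autoref{subSec.MapsAndNonEffective}. The $F$-splitting of the generic fiber enters only to guarantee that this pushforward is nonzero.

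First I would produce the map on $E$. Since $\Delta_{E,\eta}\geq 0$ (its components are the horizontal components of $\Delta_E$, which are effective by hypothesis), global $F$-splitting of $(E_\eta,\Delta_{E,\eta})$ at some level propagates to every sufficiently divisible level; after replacing $e$ by a multiple $ne$ --- legitimate since $\O_E\big((1-p^{ne})(K_E+\Delta_E)\big)\cong\pi^*\big(\sL_W^{1+p^e+\cdots+p^{(n-1)e}}\big)$ and the self-composition $\phi^{ne}$ of \autoref{subSec.MapsAndNonEffective} induces the same $\Delta_E$ --- we may assume the generic fiber is $F$-split at the very level $e$ of \autoref{set.CanBundFormula}. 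Because the horizontal part of $\Delta_E$ is effective, $\Delta_E$ corresponds, as in \autoref{subsec.MapsAndQDivisors} and \autoref{subSec.MapsAndNonEffective}, to a nonzero map $\phi\colon F^e_*\pi^*\sL_W\to K(E)$ whose image is contained in $\O_E$ over a dense open $U_0\subseteq W$ (its negative locus being vertical over $W$).

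Next I would push down. Frobenius commutes with $\pi$, and $\pi_*\O_E=\O_W$, so by the projection formula $\pi_*\big(F^e_*\pi^*\sL_W\big)=F^e_*\big(\pi_*\pi^*\sL_W\big)=F^e_*\sL_W$; applying $\pi_*$ to $\phi$ therefore produces a map whose values, computed on the dense open $U_0$, lie in $\pi_*\O_E=\O_W\subseteq K(W)$, hence lie in $K(W)$ everywhere since $F^e_*\sL_W$ is torsion-free. Write $\phi_W\colon F^e_*\sL_W\to K(W)$ for this map. Taking stalks at the generic point $\eta$ of $W$ and using $\pi_*(F^e_*\pi^*\sL_W)_\eta\cong H^0\big(E_\eta,F^e_*(\pi^*\sL_W)|_{E_\eta}\big)$ together with $\O_{W,\eta}\cong H^0(E_\eta,\O_{E_\eta})$, the stalk of $\phi_W$ at $\eta$ is exactly the map on global sections of the generic fiber attached to $\Delta_{E,\eta}$. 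Global $F$-splitting of $(E_\eta,\Delta_{E,\eta})$ over $\Spec K(W)$ --- which, the base being a point, is ordinary global $F$-splitting --- makes this map surjective, hence nonzero, so $\phi_W\neq 0$. By \autoref{subSec.MapsAndNonEffective}, $\phi_W$ then determines a $\bQ$-divisor $\Delta_W$ on $W$ with $\O_W\big((1-p^e)(K_W+\Delta_W)\big)\cong\sL_W$; pulling this isomorphism back along $\pi$ and comparing with $\pi^*\sL_W\cong\O_E\big((1-p^e)(K_E+\Delta_E)\big)$ gives $(1-p^e)\pi^*(K_W+\Delta_W)\sim(1-p^e)(K_E+\Delta_E)$, i.e.\ $\pi^*(K_W+\Delta_W)\sim_{\bQ}K_E+\Delta_E$. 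Canonicity is formal: $\Delta_E$ fixes $\phi$ up to a unit of $\Gamma(E,\O_E)=\Gamma(W,\O_W)$, hence fixes $\phi_W$ up to the same unit and so fixes $\Delta_W$; and replacing $e$ by a multiple replaces $\phi_W$ by a self-composition, which induces the same divisor by the composition lemmas of \autoref{subSec.MapsAndNonEffective}.

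I expect the main obstacle to be the nonvanishing step --- identifying the stalk of $\pi_*\phi$ at the generic point of $W$ with the $H^0$-level $F$-splitting map of $(E_\eta,\Delta_{E,\eta})$ --- which requires checking that commutation of Frobenius with $\pi$, the projection formula, and passage to the generic fiber are mutually compatible, and (when $\Delta_E$ has non-effective vertical part) that one may harmlessly twist by $\pi^*B$ for a suitable effective $B$ on $W$ to reduce to honest sheaf maps, exactly as in the proof of \autoref{lem.DivisorialPartOfDifferent}. Everything else --- the choice of $e$, the comparison of line bundles, and the canonicity assertions --- is routine once the composition lemmas of \autoref{subSec.MapsAndNonEffective} are in hand.
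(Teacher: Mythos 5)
Your proposal is correct and follows essentially the same route as the paper: fix the map $\phi_E$ attached to $\Delta_E$, push it forward along $\pi$ using $\pi_*\O_E=\O_W$ and the projection formula (with effectivity of the horizontal part ensuring the image lands in $K(W)$), use the generic-fiber $F$-splitting to see $\pi_*\phi_E\neq 0$, and take $\Delta_W$ to be the divisor of the resulting map, with canonicity coming from $\Gamma(E,\O_E)^{\times}=\Gamma(W,\O_W)^{\times}$. Your extra care about aligning the level $e$ via self-composition and identifying the generic stalk of $\pi_*\phi_E$ with the $H^0$-map on $E_\eta$ only makes explicit points the paper leaves implicit.
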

\begin{proof}
Suppose that $\phi_{E} : F^e_* \O_E( (1-p^e)(K_E + \Delta_E)) \to K(E)$ is a map corresponding to $\Delta_E$ (these maps are unique up to multiplication by units of $\Gamma(E, \O_E)$, which are the same as units of $\Gamma(W, \O_W)$).

We work locally on the base $W$, and hence assume that $W$ is affine.  Since the horizontal part of $\Delta_{E}$ is effective, we have a Cartier divisor $D \geq 0$ on $W$ such that $\Image(\phi_E) \subseteq \O_E(\pi^* D) \subseteq K(E)$, and so we push down $\phi_E$ by $\pi$ and by the projection formula obtain:
\[
\phi_W := \pi_* \phi_E : F^e_* \sL_W \to \O_W(D) \subseteq K(W).
\]
Since $(E_{\eta}, \Delta_{E, \eta})$ is globally $F$-split relative to $\pi$, we see that $\phi_W = \pi_* \phi_E$ is nonzero.
The divisor $\Delta_W$ associated to $\phi_W$ satisfies the desired condition since $\pi^* (1-p^e)(K_W + \Delta_W) \sim (1-p^e)(K_E + \Delta_E)$.  Note that $\Delta_W$ is independent of choices, since, as already noted, multiplication by units of $\Gamma(W, \O_W)$ is the same as multiplication by units of $\Gamma(E, \O_E)$.
\end{proof}

\begin{corollary}
With notation as in \autoref{thm.CanonicalBundleFormla}, we may always choose $\phi_W : F^e_* \O_W(K_W + \Delta_W) \to K(W)$ corresponding to $\Delta_W$ and $\phi_E : F^e_* \O_X( (1-p^e)(K_E + \Delta_E)) \to K(E)$ corresponding to $\Delta_E$ such that $\phi_W$ extends to $\phi_E$ as in \autoref{cor.PInverseExtendsOnSheaves}.
\end{corollary}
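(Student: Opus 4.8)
The plan is to read the statement off the construction already carried out in the proof of \autoref{thm.CanonicalBundleFormla}, where $\phi_W$ was produced as $\pi_*\phi_E$; I want to show that this relation \emph{is} the commutativity asserted in \autoref{cor.PInverseExtendsOnSheaves}, once the relevant line bundles are realized inside the function fields $K(W)\subseteq K(E)$.

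First I would fix a canonical divisor $K_W$ on $W$ so that $\sL_W=\O_W\big((1-p^e)(K_W+\Delta_W)\big)$ is realized as a subsheaf of the constant sheaf $K(W)$ and $\phi_W$ extends to $\phi_W:F^e_*K(W)\to K(W)$; this is precisely the normalization attached to the divisor $\Delta_W$ of a map $F^e_*\sL_W\to K(W)$, see \autoref{subsec.MapsAndQDivisors} and \autoref{subSec.MapsAndNonEffective}. Next I set $K_E$ to be the canonical divisor with $K_E+\Delta_E=\pi^*(K_W+\Delta_W)$, equivalently $(1-p^e)(K_E+\Delta_E)=\pi^*\big((1-p^e)(K_W+\Delta_W)\big)$ as the pullback of a Cartier divisor; this is legitimate because that Cartier divisor defines $\O_E\big((1-p^e)(K_E+\Delta_E)\big)\cong\pi^*\sL_W$ by \autoref{set.CanBundFormula}. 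Pulling back the chosen embedding $\sL_W\hookrightarrow K(W)$ along the dominant map $\pi$ realizes $\O_E\big((1-p^e)(K_E+\Delta_E)\big)$ as a subsheaf of $K(E)$ compatibly with $K(W)\subseteq K(E)$. Finally I take $\phi_E:F^e_*\O_E\big((1-p^e)(K_E+\Delta_E)\big)\to K(E)$ to be any map corresponding to $\Delta_E$ and set $\phi_W:=\pi_*\phi_E$ as in the proof of \autoref{thm.CanonicalBundleFormla}; replacing $\phi_E$ by a unit multiple (a unit of $\Gamma(E,\O_E)=\Gamma(W,\O_W)$) replaces $\phi_W$ by the same unit multiple, so the choice is immaterial.

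It remains to check the commutative square of \autoref{cor.PInverseExtendsOnSheaves}. Since $\pi$ commutes with Frobenius, the projection formula furnishes a canonical identification $\pi_*F^e_*\pi^*\sL_W\cong F^e_*(\pi_*\O_E\otimes\sL_W)=F^e_*\sL_W$, under which a local section $s$ of $\sL_W$ over $U\subseteq W$ corresponds to its image $1\otimes s$ in $\pi^*\sL_W$ over $\pi^{-1}U$, which under the two chosen embeddings is just $s$ viewed inside $K(W)\subseteq K(E)$. The equality $\phi_W=\pi_*\phi_E$ then says exactly that $\phi_E(F^e_*s)=\phi_W(F^e_*s)$ for every such $s$, with the common value lying in $K(W)$. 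Letting $U$ shrink to the generic point of $W$, so that the germs of such $s$ exhaust $\sL_{W,\eta}=K(W)$, and using that $\phi_E$ and $\phi_W$ commute with restriction, this shows $\phi_E$ carries $F^e_*\pi^{-1}\O_W\big((1-p^e)(K_W+\Delta_W)\big)$ into $\pi^{-1}K(W)\subseteq K(E)$ and agrees there with $\phi_W$; localizing just as in the proofs of \autoref{prop.PInverseOnFields} and \autoref{cor.PInverseExtendsOnSheaves} gives the displayed square. The one place that needs care is this matching up of the projection-formula isomorphism, the adjunction map $\pi^{-1}\sL_W\to\pi^*\sL_W$, and the fraction-field extensions of $\phi_W$ and $\phi_E$; but since every arrow involved is natural and $\pi_*\O_E=\O_W$, this is a diagram chase dictated by the definitions and not a real obstacle, and no hypothesis beyond those of \autoref{set.CanBundFormula} and \autoref{thm.CanonicalBundleFormla} enters.
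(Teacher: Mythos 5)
Your proposal is correct and follows essentially the same route as the paper: the paper's proof is simply the observation that the statement follows from the construction of $\phi_W$ as $\pi_*\phi_E$ in the proof of \autoref{thm.CanonicalBundleFormla}, which is exactly your argument. Your additional bookkeeping (fixing $K_W$, pulling back the embedding $\sL_W\subseteq K(W)$, and matching the projection-formula identification with the fraction-field extensions) just makes explicit the details the paper leaves implicit.
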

\begin{proof}
This follows from the construction of $\phi_W$ in the proof of \autoref{thm.CanonicalBundleFormla}.
\end{proof}

We can also work in the following more general setting without the effectivity hypothesis on $\Delta_E$

\begin{corollary}[General canonical bundle formula]
\label{cor.GenCanonicalBundleFormula}
With notations as in \autoref{set.CanBundFormula}, suppose that $\phi_{E} : F^e_*\O_E( (1-p^e)(K_E + \Delta_E)) \to K(E)$ is a map corresponding to $\Delta_E$ that satisfies the following conditions:
\begin{itemize}
\item[(i)]  $\phi_{E}\big(F^e_* \pi^{-1} \sL_W \big) \subseteq \pi^{-1}\big(K(W)\big) \subseteq K(E)$.
\item[(ii)]  The map induced via $\pi_*$, $\phi_W : F^e_* \sL_W \to K(W)$, is nonzero.\footnote{This means that the generic fiber is sub-$F$-split.}
\end{itemize}
Then there exists a canonically determined $\Delta_W$ (corresponding to $\phi_W$) such that $K_E + \Delta_E \sim_{\bQ} \pi^* (K_W + \Delta_W)$.
\end{corollary}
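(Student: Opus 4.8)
The plan is to run the construction of \autoref{thm.CanonicalBundleFormla} verbatim, except that the effective bound on the image of $\phi_E$ (which there forced $\Delta_W$ to be effective) is now replaced by the weaker containment of condition (i), and the resulting map on the base is read off through the non-effective divisor/map dictionary of \autoref{subSec.MapsAndNonEffective}. The first thing to notice is that conditions (i) and (ii) here are exactly the hypothesis \autoref{eq.MapSendsKZtoKZ} of \autoref{lem.DivisorialPartOfDifferent}, with $\pi : E \to W$ in place of $f$, with $\sL_W$ in place of $\O_Z((1-p^e)L)$, and with $\pi^*\sL_W = \O_E((1-p^e)(K_E+\Delta_E))$ (this is \autoref{set.CanBundFormula}) in place of $\O_X((1-p^e)f^*L)$.

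Concretely, I would fix the given $\O_E$-linear map $\phi_E : F^e_*\O_E((1-p^e)(K_E+\Delta_E)) = F^e_*\pi^*\sL_W \to K(E)$ corresponding to $\Delta_E$, which is unique up to multiplication by a unit of $\Gamma(E,\O_E)$. Condition (i) says that $\phi_E$ restricts to a map $F^e_*\pi^{-1}\sL_W \to \pi^{-1}K(W)$; pushing this forward by $\pi_*$ — using that Frobenius commutes with $\pi$, so $\pi_*F^e_{E,*} = F^e_{W,*}\pi_*$, and the projection formula together with $\pi_*\O_E = \O_W$, so that $\pi_*\pi^*\sL_W \cong \sL_W$ — yields an $\O_W$-linear map $\phi_W : F^e_*\sL_W \to K(W)$. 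This is precisely the induced map denoted $\phi_Z$ in the proof of \autoref{lem.DivisorialPartOfDifferent}, and it is nonzero by condition (ii). By \autoref{def.NoneffectiveDelta} (and the two lemmas following it), $\phi_W$ therefore determines a $\bQ$-divisor $\Delta_W$ on $W$, not assumed effective, with $\O_W((1-p^e)(K_W+\Delta_W)) \cong \sL_W$.

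It remains to extract the linear equivalence and to see that $\Delta_W$ is canonical. Since $\sL_W$ is a line bundle, $(1-p^e)(K_W+\Delta_W)$ is linearly equivalent to a Cartier divisor, so its pullback under $\pi$ is well defined and $\pi^*\O_W((1-p^e)(K_W+\Delta_W)) \cong \O_E(\pi^*((1-p^e)(K_W+\Delta_W)))$; comparing this with $\pi^*\sL_W \cong \O_E((1-p^e)(K_E+\Delta_E))$ gives $(1-p^e)\pi^*(K_W+\Delta_W) \sim (1-p^e)(K_E+\Delta_E)$, and dividing by $1-p^e$ gives $K_E+\Delta_E \sim_{\bQ} \pi^*(K_W+\Delta_W)$. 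Canonicity follows because $\phi_E$ was unique up to a unit of $\Gamma(E,\O_E)$, which equals $\Gamma(W,\O_W)$ as $\pi_*\O_E = \O_W$, so $\phi_W$ is unique up to a unit of $\Gamma(W,\O_W)$, and the divisor attached to a $p^{-e}$-linear map is unaffected by such rescaling (\autoref{subsec.MapsAndQDivisors}); hence $\Delta_W$ depends only on $\Delta_E$. The one step that really needs to be written with care is the pushforward in the middle paragraph: one has to reconcile condition (i), deliberately phrased with the abelian-sheaf inverse image $\pi^{-1}$ so as not to tensor up to $\O_E$-modules, with the reflexive-sheaf and fractional-ideal bookkeeping of \autoref{subSec.MapsAndNonEffective} that governs a possibly non-effective $\Delta_W$, and confirm that the image of $\phi_W$ genuinely lands in $K(W)$ (\ie in a rank-one $\O_W$-module) and not in something larger. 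This is the same manipulation carried out in \autoref{thm.CanonicalBundleFormla}, just without the convenience of an effective bound $\O_E(\pi^*D)$, and I expect it to be routine once the sheaf-theoretic setup is unwound, but it is the heart of the argument.
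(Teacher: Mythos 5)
Your proposal is correct and follows essentially the same route as the paper, whose proof of \autoref{cor.GenCanonicalBundleFormula} simply repeats the last few lines of the proof of \autoref{thm.CanonicalBundleFormla}: read off $\Delta_W$ from $\phi_W$ via the (possibly non-effective) dictionary of \autoref{subSec.MapsAndNonEffective}, compare $\O_W((1-p^e)(K_W+\Delta_W))\cong \sL_W$ with $\pi^*\sL_W \cong \O_E((1-p^e)(K_E+\Delta_E))$, and note canonicity via $\Gamma(E,\O_E)=\Gamma(W,\O_W)$. The pushforward step you flag as the ``heart'' is in fact already packaged into hypotheses (i) and (ii), which posit the induced map $\phi_W : F^e_*\sL_W \to K(W)$, so nothing beyond the projection-formula bookkeeping you describe is needed.
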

\begin{proof}
The proof is the same as the last few lines of the proof of \autoref{thm.CanonicalBundleFormla}.
\end{proof}
\begin{remark}
\label{rem.EasyCanBundleSingularities}
Note that in the setup of \autoref{cor.GenCanonicalBundleFormula}, if $(E, \Delta_E)$ is globally sub-$F$-split relative to $\pi$, then $(W, \Delta_W)$ is locally sub-$F$-pure, also compare with \cite{MehtaRamanathanFrobeniusSplittingAndCohomologyVanishing}.
\end{remark}

\begin{corollary}
\label{cor.FadjunctionViaCanBundle}
Suppose that $V$ is an $F$-pure center of $(X, \Delta)$ that is also a log canonical center with $E$ a normal divisor with discrepancy $-1$ (on some birational model $\pi : X' \to X$) mapping to $V$.  Write $K_{X'} + \Delta_{X'} = \pi^*(K_X + \Delta)$ and set $V'$ to be the normalization of $V$ and form the Stein factorization $E \to W \xrightarrow{h} V' \to V$ (noting that $W$ is normal since $E$ is).  Also suppose that $\Delta_E$ is the different of $K_{X'}+\Delta_{X'}$ along $E$ and that $\Delta_{V'}$ is the $F$-different of $(X, \Delta)$ along $V'$.  Then $\Delta_E$ induces $\Delta_W$ by \autoref{cor.GenCanonicalBundleFormula} and we have that $\Delta_W = h^* \Delta_{V'} - \Ram_{W/V'}$.  In particular, if $W = V'$, then $\Delta_E$ induces the $F$-different of $(X, \Delta)$ along $W$.
\end{corollary}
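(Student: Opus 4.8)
The plan is to put the Stein factorization map $\pi_E : E \to W$ into the setting of \autoref{set.CanBundFormula}, apply \autoref{cor.GenCanonicalBundleFormula} to produce $\Delta_W$, and then identify the map $\phi_W$ defining $\Delta_W$ with the $\Tr_{W/V'}$-transpose of the map defining the $F$-different $\Delta_{V'}$. The special case $W = V'$ then falls out for free since the relevant ramification divisor is zero.

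First I would verify the hypotheses of \autoref{cor.GenCanonicalBundleFormula} for $\pi_E : E \to W$. The scheme $W$ is normal (since $E$ is), $F$-finite and integral, and $(\pi_E)_* \O_E = \O_W$ by the definition of the Stein factorization. Since $E$ has discrepancy $-1$ and $\Delta_E$ is the different, $K_E + \Delta_E = \pi^*(K_X + \Delta)|_E$; as $(p^e - 1)(K_X + \Delta)$ is Cartier and $E \to V$ factors through $W$, this shows $\O_E( (1-p^e)(K_E + \Delta_E)) \cong \pi_E^* \sL_W$ for the line bundle $\sL_W$ on $W$ obtained by pulling back the appropriate multiple of $K_X + \Delta$. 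Let $\phi_E : F^e_* \O_E( (1-p^e)(K_E + \Delta_E)) \to K(E)$ correspond to $\Delta_E$, extended to $F^e_* K(E) \to K(E)$. By \autoref{cor.CenterToPushfwdSeparable} the extension $K(V) = K(V') \subseteq K(W)$ is finite and separable, so the second part of that corollary shows $\phi_E$ restricts to a map $F^e_* K(W) \to K(W)$; translated to sheaves this gives condition (i). This restriction restricts further to $\phi_{V'}$ on $F^e_* K(V')$, and $\phi_{V'}$ is nonzero by \autoref{prop.PInverseOnFields}, so condition (ii) holds as well. Hence \autoref{cor.GenCanonicalBundleFormula} yields a canonically determined $\Delta_W$, corresponding to the map $\phi_W$ obtained from $\phi_E$ via $\pi_*$, with $K_E + \Delta_E \sim_{\bQ} \pi_E^*(K_W + \Delta_W)$; and at the level of fraction fields $\phi_W$ is exactly the restriction $\phi_E|_{F^e_* K(W)}$ just constructed, since pushing forward by $\pi_E$ does not alter the fraction-field map.

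It remains to compute $\Delta_W$. Here I would use the explicit description from the proof of \autoref{cor.CenterToPushfwdSeparable}: choosing a $K(V')$-basis $\{b_j\}$ of $F^e_* K(V')$, the same $\{b_j\}$ is a $K(W)$-basis of $F^e_* K(W)$ and $\phi_E\big(\sum_j c_j b_j\big) = \sum_j c_j \phi_{V'}(b_j)$ for $c_j \in K(W)$, so $\phi_W = \phi_E|_{F^e_* K(W)}$ is precisely the base extension $\phi_{V'} \otimes_{K(V')} K(W)$. Since $h : W \to V'$ is finite separable and $\Tr_{W/V'}$ is the field trace, this base extension coincides with the $\Tr_{W/V'}$-transpose of $\phi_{V'}$ by \cite[Lemma 3.3, Proposition 4.1]{SchwedeTuckerTestIdealFiniteMaps}, exactly as in the proof of \autoref{prop.keyBaseExtensionsPMapsForFamilies}. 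By \autoref{defn.FDifferent} the divisor attached to $\phi_{V'}$ is the $F$-different $\Delta_{V'}$, so part (c) of the transpose lemma of \autoref{subsec.pMapsAndBaseExtension} (applied with $f = h$ and $\Tt = \Tr_{W/V'}$, whose ramification divisor is $\Ram_{W/V'}$) gives $K_W + \Delta_W = h^*(K_{V'} + \Delta_{V'})$ together with $\Delta_W - h^* \Delta_{V'} = -\Ram_{W/V'}$, that is, $\Delta_W = h^* \Delta_{V'} - \Ram_{W/V'}$. When $W = V'$ the map $h$ is the identity and $\Ram_{W/V'} = 0$, so $\Delta_W = \Delta_{V'}$ is the $F$-different of $(X,\Delta)$ along $W$.

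I expect the main obstacle to be the bookkeeping in the last two paragraphs: one has to check that the restriction of $\phi_E$ to $F^e_* K(W)$ produced by the canonical bundle formula, the base extension $\phi_{V'} \otimes_{K(V')} K(W)$, and the $\Tr_{W/V'}$-transpose of $\phi_{V'}$ all agree as $p^{-e}$-linear maps of function fields, matching the $F^e_*$ and $1/p^e$ conventions and using that $V' \to V$ is the normalization so that $K(V) = K(V')$ and $\Tr_{W/V'} = \Tr_{W/V}$. Once these identifications are in place the divisor statement is a direct application of the base-extension lemma.
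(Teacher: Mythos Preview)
Your proof is correct and follows essentially the same route as the paper: verify the hypotheses of \autoref{cor.GenCanonicalBundleFormula} via \autoref{cor.CenterToPushfwdSeparable}, observe that the resulting $\phi_W$ is the restriction of $\phi_E$ to $F^e_* K(W)$ extending $\phi_{V'}$, and then invoke \cite{SchwedeTuckerTestIdealFiniteMaps} to identify the associated divisor as $h^*\Delta_{V'} - \Ram_{W/V'}$. The paper's version is terser---it simply says the extension of $\phi_{V'}$ to $K(W)$ is unique and cites \cite{SchwedeTuckerTestIdealFiniteMaps} for the divisor formula---whereas you spell out the identification with the $\Tr_{W/V'}$-transpose via base extension, but the underlying argument is the same.
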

\begin{proof}
We notice that $h : W \to V'$ is separable by \autoref{cor.CenterToPushfwdSeparable} and furthermore, we have the restriction $\phi_E|_{F^e_* K(W)} : F^e_* K(W) \to K(W)$ which extends the map $\phi_{V'} : F^e_* \O_{V'}((1-p^e)(K_{V'} + \Delta_{V'}) \subseteq F^e_* K(V') \to K(V')$ from which we compute the $F$-different.  It follows that the conditions of \autoref{cor.GenCanonicalBundleFormula} hold and that the induced map $\phi_W$ generically agrees with $\phi_{E}|_{F^e_* K(W)}$.  Hence $\phi_{V'}$, which computes the $F$-different on $V'$, extends to $\phi_W$.  This extension is unique and corresponds to $h^* \Delta_{V'} - \Ram_{W/V'}$ by \cite{SchwedeTuckerTestIdealFiniteMaps}.
\end{proof}

We describe the structure of $\Delta_E$ via analogy with the usual different.  In that setting \autoref{dfn:div-part}, the numbers $c_Q$ are defined by \emph{local} sub-log canonicity.  Here we instead require \emph{global} sub-$F$-splitting.

\begin{proposition}
\label{prop.DescriptionOfFDifferentFromFormula}
With notation as in \autoref{set.CanBundFormula} and suppose that $\Delta_E$ induces $\Delta_W$ via \autoref{thm.CanonicalBundleFormla} or \autoref{cor.GenCanonicalBundleFormula}.

For each point $Q \in W$ of codimension 1, set
\[
d_Q = \sup \{ t \;|\; (E, \Delta_E + t \pi^* V(Q) ) \text{ is globally sub-$F$-split over a neighborhood of $Q$} \}.
\]
Then
\[
\Delta_W = \sum (1-d_Q) V(Q).
\]
\end{proposition}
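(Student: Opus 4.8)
The plan is to reduce everything to a computation on a DVR and then match the two sides coefficient by coefficient. Fix a codimension-$1$ point $Q\in W$; since both sides of the asserted equality are local on $W$ near $Q$, I would localize at $Q$ and assume $W=\Spec\O_{W,Q}$ is the spectrum of a DVR with uniformizer $x$, so that $V(Q)=\Div_W(x)$ is Cartier and $\pi^*V(Q)=\Div_E(\pi^*x)$. Recall from the construction in \autoref{thm.CanonicalBundleFormla} (and \autoref{cor.GenCanonicalBundleFormula}) that $\Delta_W$ is the divisor attached to $\phi_W:=\pi_*\phi_E$, where $\phi_E\colon F^e_*\O_E((1-p^e)(K_E+\Delta_E))=F^e_*\pi^*\sL_W\to K(E)$ is the chosen map corresponding to $\Delta_E$. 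For $Q\notin\Supp\Delta_W$ the claimed identity reads $d_Q=1$, which is consistent with $1-d_Q=0$, so it suffices to compute $\coeff_Q(\Delta_W)$ for a fixed $Q$.

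The first key step is to observe that $\pi_*$ intertwines the $p^{-e}$-linear maps attached to $\Delta_E+t\pi^*V(Q)$ and to $\Delta_W+tV(Q)$. For $t\ge 0$ the map corresponding to $\Delta_E+t\pi^*V(Q)$ is $\phi_E$ precomposed, inside $F^e_*$, with multiplication by $(\pi^*x)^{\lceil t(p^e-1)\rceil}$, and the analogous statement holds on $W$ with $x^{\lceil t(p^e-1)\rceil}$. Since Frobenius is natural, $\pi\circ F_E=F_W\circ\pi$, so $\pi_*$ commutes with $F^e_*$; combining this with $\pi_*\O_E=\O_W$ and the projection formula (and that $\pi^*x$ is pulled back from $W$, so multiplication by it pushes forward to multiplication by $x$) shows that $\pi_*$ of the $(\Delta_E+t\pi^*V(Q))$-map is exactly the $(\Delta_W+tV(Q))$-map. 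The passage between these level-$e$ maps and their self-compositions at level $ne$ — needed because the definitions of global (sub-)$F$-splitting and sub-$F$-purity quantify over $e$ differently — I would handle using \autoref{lem.OneSplitImpliesMore} and the lemmas of \autoref{subSec.MapsAndNonEffective}, noting that $\pi_*$ also commutes with composition of such maps.

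The second key step uses that $W=\Spec\O_{W,Q}$ is local, so taking global sections agrees on $E$ and on $W$ via $\pi_*$; hence $1$ lies in the image of the global-sections map on $E$ if and only if it lies in the image on $W$. The left-hand condition is exactly that $(E,\Delta_E+t\pi^*V(Q))$ is globally sub-$F$-split over $\Spec\O_{W,Q}$ — the open cover in the definition of ``sub-$F$-split relative to $\pi$'' being trivial over a local base — and a routine spreading-out/limit argument identifies this with the condition defining $d_Q$, namely global sub-$F$-splitting over a neighborhood of $Q$. The right-hand condition is exactly that $(W,\Delta_W+tV(Q))$ is sub-$F$-pure at $Q$; writing $\Delta_W=\coeff_Q(\Delta_W)\cdot V(Q)$ near $Q$, the DVR computation recalled earlier (that on $\Spec R$ a divisor $\Delta_\phi$ is sub-$F$-pure iff $\Delta_\phi\le \Div(x)$) shows this holds precisely when $\coeff_Q(\Delta_W)+t\le 1$. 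Taking the supremum over such $t$ gives $d_Q=1-\coeff_Q(\Delta_W)$, i.e.\ $\Delta_W=\sum(1-d_Q)V(Q)$.

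I expect the main obstacle to be the bookkeeping around the exponent $e$: reconciling the ``for some $e$'' in the definition of global sub-$F$-splitting with the ``for all sufficiently divisible $e$'' in sub-$F$-purity, and checking that the pushforward compatibility survives passage to the level-$ne$ self-compositions, where one must be careful that $\lceil t(p^{ne}-1)\rceil$ and $\lceil t(p^e-1)\rceil(1+p^e+\cdots+p^{(n-1)e})$ need not coincide. On the base DVR $W$ this is precisely the content of the earlier one-dimensional lemma; on $E$ the compatibility holds because $\pi_*$ is exact here (the base is affine) and commutes with both Frobenius pushforward and composition of $p^{-e}$-linear maps. Spelling this out carefully is where the real, if unglamorous, work lies.
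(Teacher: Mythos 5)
Your proposal is correct and follows essentially the same route as the paper: both arguments use $\phi_W = \pi_*\phi_E$ together with the projection formula to identify the global-section image conditions on $E$ and on $W$ (this is exactly the paper's \autoref{clm.LocFPureEqualsGlobFSplit}), and then reduce to the DVR computation that sub-$F$-purity of $(W,\Delta_W+tV(Q))$ near $Q$ means $\coeff_Q(\Delta_W)+t\le 1$. The only cosmetic differences are that you localize at $Q$ first (necessitating your spreading-out remark) where the paper works on an affine neighborhood, and your worry about $\lceil t(p^{ne}-1)\rceil$ versus $\lceil t(p^e-1)\rceil(1+p^e+\cdots+p^{(n-1)e})$ disappears once $e$ is chosen with $(p^e-1)t\in\bZ$, as in the paper's proof.
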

\begin{proof}
Since $W$ is normal, codimension $1$ points of $W$ correspond to Weil divisors. For each codimension $1$ point $Q \in W$ we set
\[
b_Q = \sup \{ t \;|\; (W, \Delta_W + t V(Q) ) \text{ is locally sub-$F$-pure in a neighborhood of $Q$} \}.
\]
Note that $b_Q$ is defined using only the data $(W, \Delta_W)$.  Working locally near $Q$ on $W$, where the stalk $\O_{W,Q}$ is a DVR, we see that $(W, \lambda Q)$ is sub-$F$-pure in a neighborhood of $Q$ if and only if $\lambda \leq 1$.   Hence we immediately see that:
\[
\Delta_W = \sum (1-b_Q) V(Q).
\]
The proof will then be completed by the following claim.
\begin{claim}
\label{clm.LocFPureEqualsGlobFSplit}
Suppose $t$ is a rational number without $p$ in its denominator.  The following are equivalent:
\begin{enumerate}
\item{} $(W, \Delta_W + tV(Q))$ is locally sub-$F$-pure in a neighborhood of $Q$.
\label{clm.cond1}
\item{} $(E, \Delta_E + t\pi^*(V(Q)))$ is globally sub-$F$-split over a neighborhood of $Q$.
\label{clm.cond2}
\end{enumerate}
\end{claim}
\begin{proof}[Proof of claim]
We fix a $Q$ and work on an affine neighborhood of $Q$ where $V(Q)$ is a Cartier divisor generated by a single element $w \in \O_W$.  Choose $e$ so that $(p^e - 1)t \in \bZ$.
Because we assumed that $\Delta_W$ was induced by $\Delta_E$ via \autoref{thm.CanonicalBundleFormla}, condition \autoref{clm.cond1} says that
\[
1 \in \Image\Big( H^0\big(W, F^e_* (w^{t(p^e-1)} \cdot \pi_*\O_E( (1-p^e)(K_E + \Delta_E)))\big) \xrightarrow{\phi_W} H^0(W, K(W)) \Big).
\]
But by the projection formula this is the same condition that $1$ is in the image of the map:
\[
\begin{array}{rl}
& H^0(E, F^e_* \O_E( (1-p^e)(K_E + \Delta_E + t \pi^* V(Q)))) \\
\xrightarrow{\phi_E} & H^0(E, K(E)).
\end{array}
\]
This proves the claim.
\end{proof}
The claim proves the proposition.
\end{proof}

Our next goal is to study $\Delta_W$ in terms of the structure of the fibers of $E \to W$.  We work locally and now typically assume that $W$ is the spectrum of a DVR and hence we identify $\O_W$ with the line bundles $\sL_W = \O_W( (1-p^e)(K_W + \Delta_W))$ studied earlier.

\begin{lemma}
\label{lem.EqualCoefficientsImplyFSplitFiber}
Let $\pi : E \to W$ be a proper dominant map between $F$-finite integral schemes with $\pi_* \O_E = \O_W$
 such that $W$ is the spectrum of a DVR, $W = \Spec A$.  Suppose that we have a nonzero map $\phi_W : F^e_* \O_W \to K(W)$ that extends to a map $\phi_E : F^e_* \O_E \to K(E)$ via the inclusion $K(W) \subseteq K(E)$ and with corresponding $\Delta_W$ and $\Delta_E$.  Let $B = \sum b_i {B_i}= \pi^* Q$ be the fiber over the closed point $Q \in W$.

Suppose that $c = \coeff_{B_i}(\pi^* \Delta_{W}) 
= \coeff_{B_i}(\Delta_{E})$ for some $B_i$.  Then $b_i = 1$.  Furthermore, in this case, let $\phi_{B_i} : F^e_*\O_{B_i} \to K(B_i)$ be the map on $B_i$ induced by $\Delta_E + (1-c)B$ as in \autoref{eq.AdjunctionInducingFDiff}.  Then $(B_i, \Delta_{\phi_{B_i}})$ is globally sub-$F$-split.
\end{lemma}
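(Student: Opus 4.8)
The plan is to deduce $b_i=1$ from the description of the coefficients of $\Delta_W$ in terms of global sub-$F$-splitting, and then to obtain the statement about $\phi_{B_i}$ by a short restriction argument. First I would observe that, since $\phi_E$ extends $\phi_W$, we have $\pi_*\phi_E=\phi_W$, and therefore the data $(E,\Delta_E)$ together with $\phi_E$ satisfies the hypotheses of \autoref{cor.GenCanonicalBundleFormula} with $\sL_W=\O_W$: indeed $\phi_E(F^e_*\pi^{-1}\O_W)=\phi_W(F^e_*\O_W)\subseteq K(W)$, and $\pi_*\phi_E=\phi_W\ne 0$. Hence $\Delta_W$ is exactly the divisor produced by the canonical bundle formula, so \autoref{prop.DescriptionOfFDifferentFromFormula} applies to the unique prime divisor $Q$ of the DVR $W$: writing $\lambda:=\coeff_Q(\Delta_W)$ we get $\lambda=1-d_Q$, where
\[
d_Q=\sup\{\, t \mid (E,\Delta_E+t\pi^*Q)\text{ is globally sub-}F\text{-split over a neighborhood of }Q\,\}.
\]
Moreover $\pi^*\Delta_W=\lambda\,\pi^*Q=\lambda\sum_j b_jB_j$, so the hypothesis $\coeff_{B_i}(\pi^*\Delta_W)=\coeff_{B_i}(\Delta_E)$ reads $\coeff_{B_i}(\Delta_E)=\lambda b_i$.

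Next, for every rational $t<d_Q$ the pair $(E,\Delta_E+t\pi^*Q)$ is globally sub-$F$-split over a neighborhood of $Q$, hence sub-$F$-pure there by \autoref{def.GlobalFSplitting}, hence sub-log canonical there by \autoref{cor.SubFPureImpliesSubLC}. Since $B_i$ maps to $Q$ it lies over that neighborhood, so sub-log canonicity forces $1\ge\coeff_{B_i}(\Delta_E+t\pi^*Q)=\lambda b_i+tb_i$. Letting $t\to d_Q=1-\lambda$ gives $\lambda b_i+(1-\lambda)b_i=b_i\le 1$, hence $b_i=1$. In particular $c=\coeff_{B_i}(\Delta_E)=\lambda b_i=\lambda$.

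It remains to treat $\phi_{B_i}$. Since $b_i=1$ and $c=\lambda$ we have $\coeff_{B_i}(\Delta_E+(1-c)B)=c+(1-c)b_i=1$, so $B_i$ is an $F$-pure center of $(E,\Delta_E+(1-c)B)$ and the restriction of \autoref{eq.AdjunctionInducingFDiff} produces $\phi_{B_i}$ as in the statement. I would first check that $(E,\Delta_E+(1-c)B)$ is globally sub-$F$-split. The map $\phi_E'$ corresponding to $\Delta_E+(1-c)B=\Delta_E+(1-c)\pi^*Q$ satisfies $\pi_*\phi_E'=\phi_W'$, where $\phi_W'$ corresponds to $\Delta_W+(1-c)Q$; but $\Delta_W+(1-c)Q=\lambda Q+(1-\lambda)Q=Q$ has coefficient $1$ at the closed point of the DVR $W$, so $1$ lies in the image of $\phi_W'$ by the elementary computation over a DVR. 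Now $\pi_*\O_E=\O_W$ and $\mathcal L':=\O_E((1-p^e)(K_E+\Delta_E+(1-c)B))\cong\O_E$, because $\O_E((1-p^e)(K_E+\Delta_E))\cong\pi^*\sL_W=\O_E$ and the extra twist $(1-p^e)(1-c)\pi^*Q$ is pulled back from a principal divisor on $W$; hence $H^0(E,F^e_*\mathcal L')=H^0(W,F^e_*\pi_*\mathcal L')$ compatibly with $\phi_E'$ and $\phi_W'$, so $1$ lies in the image of $H^0(E,F^e_*\mathcal L')\xrightarrow{\phi_E'}H^0(E,K(E))$. Restricting a section realizing this along the vertical arrows of \autoref{eq.AdjunctionInducingFDiff}, and using that $1$ restricts to $1$, gives $1$ in the image of $H^0(B_i,F^e_*\O_{B_i})\xrightarrow{\phi_{B_i}}H^0(B_i,K(B_i))$, i.e. $(B_i,\Delta_{\phi_{B_i}})$ is globally sub-$F$-split.

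The main obstacle is the first step: recognizing that the $\Delta_W$ of this lemma coincides with the canonical-bundle-formula divisor, so that \autoref{prop.DescriptionOfFDifferentFromFormula} is available, and then passing correctly to the limit $t\to d_Q$. Once $b_i=1$ is in hand, the rest is bookkeeping with the restriction diagram; I would in particular avoid the superficially natural route through the relative canonical divisor $K_{E/W}$, since $K_E+\Delta_E$ and $\pi^*(K_W+\Delta_W)$ here are only $\bQ$-linearly equivalent, which is not enough to compare coefficients along $B_i$.
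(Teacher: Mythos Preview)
Your proof is correct and follows essentially the same route as the paper. Both arguments hinge on \autoref{clm.LocFPureEqualsGlobFSplit} (inside \autoref{prop.DescriptionOfFDifferentFromFormula}): since $\coeff_Q(\Delta_W)=c/b_i$, the pair $(W,Q)$ is $F$-pure, hence $(E,\Delta_E+(1-c/b_i)\pi^*Q)$ is globally sub-$F$-split, and then $\coeff_{B_i}$ of this divisor equals $b_i$.

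The one difference is how you extract $b_i=1$ from this. The paper observes that the sub-$F$-splitting of $E$ is compatible with $b_iB_i$, so the (a priori nonreduced) scheme $b_iB_i$ inherits a map with $1$ in its image; a scheme admitting such a splitting is reduced, forcing $b_i=1$. You instead pass through \autoref{cor.SubFPureImpliesSubLC}: global sub-$F$-split $\Rightarrow$ sub-$F$-pure $\Rightarrow$ sub-log canonical, so the coefficient $b_i$ is $\le 1$. Your route is arguably cleaner since it avoids discussing maps on nonreduced schemes, and the limit $t\to d_Q$ is harmless (in fact the supremum is attained at $t=d_Q=1-c/b_i$, so you could simply take $t=d_Q$). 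The paper's route has the advantage that it simultaneously yields the sub-$F$-splitting of $B_i$, whereas you redo that part separately; but your restriction argument for $(B_i,\Delta_{\phi_{B_i}})$ is exactly what the paper's ``the rest of the result follows'' is abbreviating.
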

\begin{proof}
Let $w \in A$ denote the local parameter for $Q \in W$.
Since $\coeff_Q(\Delta_{W}) = c/b_i$ is a rational number without $p$ in the denominator, we observe that $(W, \Delta_W + (1-c/b_i) Q) = (W, Q)$ is $F$-pure and so $(E, \Delta_E + (1-c/b_i) B)$ is globally sub-$F$-split by \autoref{clm.LocFPureEqualsGlobFSplit}.  We can choose $\psi_W$ and $\psi_E$ the corresponding maps.  We then observe that the coefficient $\coeff_{B_i}(\Delta_E + (1-c/b_i)B) = c + b_i - c = b_i \geq 1$.  Hence $b_i B_i$ is compatible with $\psi_E$ (meaning at the generic point of $B_i$ that $\psi_E(F^e_* (\sL(-b_i B_i)_{\eta_{B_i}})) \subseteq \O_E(-b_i B_i)_{\eta_{B_i}}$).  This induces a map on $b_i B_i$ as in \autoref{subsec.Fdifferent} and by construction, its image contains $1$ (both locally and globally).  Working on an affine chart of $B_i$, we see that $b_i B_i$ must be reduced and so $b_i = 1$.  The rest of the result follows.
\end{proof}

In the case that the fiber $B$ is integral and $\Delta_E \geq 0$, the above lemma simply states that if the coefficient $c = \coeff_B \Delta_E$ is equal to the coefficient of $B$ in $\Delta_W$, then $(B, \Delta_B)$ is $F$-split (where $\Delta_B$ is the different of $\Delta_E + (1-c) B$ along $B$).

\begin{lemma}
\label{lem.FSplitFiberImpliesEqualCoeffs}
Let $\pi : E \to W$ be a proper dominant map between $F$-finite normal schemes with geometrically connected and reduced fibers, where $W$ is the spectrum of a DVR, $W = \Spec A$, and $\O_W = \pi_* \O_E$.  Suppose now that $\Delta_E$ is a $\bQ$-divisor on $E$ satisfying the conditions of \autoref{cor.GenCanonicalBundleFormula} and let $\Delta_W$ be the divisor on $W$ induced from $\Delta_E$.  Write $\Delta_E = \Delta_E^h + \Delta_E^v$ a decomposition into the horizontal and vertical parts . 

Let $\pi^* Q = B \subseteq E$ denote the closed fiber and suppose that $B$ is integral.  Let $\phi_B : F^e_* \sL_{\Delta_E^h+B}|_B \to K(B)$ denote the map induced by $\phi_{\Delta_E^h + B} : F^e_* \sL_E := F^e_* \O_E( (1-p^e)(K_E + \Delta_E^h + B)) \to K(E)$ via the method of \autoref{eq.AdjunctionInducingFDiff}.

If $\phi_B$ has $1$ in its image on global sections\footnote{This just means we can identify $\phi_B$ with a sub-$F$-splitting since abstractly $\sL_E \cong \O_E$.}, then $\coeff_Q(\Delta_W) = \coeff_B(\Delta_E)$.
\end{lemma}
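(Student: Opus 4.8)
The plan is to establish the equality $\coeff_Q(\Delta_W)=\coeff_B(\Delta_E)$ by proving "$\ge$" and "$\le$" separately; the inequality "$\ge$" is essentially automatic, while "$\le$" is where the splitting of the fibre is used. Write $c=\coeff_B(\Delta_E)$. Since $W=\Spec A$ is a DVR and $B$ is integral, $B$ is the only vertical prime divisor, so $\Delta_E^v=cB$ and the divisor $\Delta_E^h+B$ appearing in the statement is exactly $\Delta_E+(1-c)\pi^*Q$; in particular $\coeff_B(\Delta_E^h+B)=1$, so $\psi_E:=\phi_{\Delta_E^h+B}$ is compatible with $B$ and $\phi_B$ is literally its restriction to $B$ as in \autoref{eq.AdjunctionInducingFDiff}. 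One checks that $(p^e-1)(K_E+\Delta_E^h+B)$ is Cartier and that $\sL_E=\pi^*\sL_W'$, where $\sL_W':=\O_W((1-p^e)(K_W+\Delta_W'))$ and $\Delta_W':=\Delta_W+(1-c)Q$. By the twisting arguments of \autoref{subSec.MapsAndNonEffective}, $\psi_E$ again satisfies the hypotheses of \autoref{cor.GenCanonicalBundleFormula}; since pushforward by $\pi$ commutes with twisting by the pullback of $(1-c)Q$ and $\pi_*\phi_E$ induces $\Delta_W$ by hypothesis, the map $\psi_W:=\pi_*\psi_E$ induces $\Delta_W'$. It therefore suffices to prove $\coeff_Q(\Delta_W')=1$.

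For $\coeff_Q(\Delta_W')\ge1$ (equivalently $\coeff_Q(\Delta_W)\ge c$) I would push the compatibility of $\psi_E$ with $B=\pi^*Q$ forward through $\pi$: using $\pi_*\O_E=\O_W$ and the projection formula, $\pi_*$ carries the compatible restriction diagram \autoref{eq.AdjunctionInducingFDiff} for $(\psi_E,B)$ onto the analogous diagram for $(\psi_W,Q)$, so in particular $\psi_W$ is compatible with $Q$, and the elementary computation over the DVR $A$ (as in the discussion following \autoref{def.FPure}) shows that a map corresponding to a divisor $\lambda Q$ can be compatible with $Q$ only if $\lambda\ge1$. Alternatively this inequality is immediate from \autoref{prop.DescriptionOfFDifferentFromFormula}, since for $t>1-c$ the pair $(E,\Delta_E+t\pi^*Q)$ has coefficient $c+t>1$ along $B$ and hence is not globally sub-$F$-split.

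For the reverse inequality I would exploit the hypothesis on $\phi_B$. Pushing the restriction-to-$B$ sequence forward by $\pi$ and using that the fibre $B$ is geometrically connected and reduced (so $(\pi|_B)_*\O_B=k(Q)$ and $\sL_E|_B\cong\O_B$), one identifies the restriction $\overline{\psi_W}$ of $\psi_W$ to $Q$ with $(\pi|_B)_*\phi_B$. Since by hypothesis $\phi_B$ has $1$ in its image on $H^0(B,\O_B)$ and $H^0(B,\O_B)=k(Q)$, the map $\overline{\psi_W}=(\pi|_B)_*\phi_B$ has $1$ in its image and is in particular nonzero; by the same DVR computation, a map over $A$ that corresponds to $\lambda Q$, is compatible with $Q$, and has nonzero restriction to $Q$ must have $\lambda=1$. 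Together with the previous paragraph this gives $\coeff_Q(\Delta_W')=1$, i.e.\ $\coeff_Q(\Delta_W)=c=\coeff_B(\Delta_E)$, as desired.

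The main obstacle is the assertion that $\pi_*$ turns the compatible restriction diagram for $(\psi_E,B)$ into the one for $(\psi_W,Q)$, i.e.\ that $\overline{\psi_W}=(\pi|_B)_*\phi_B$. A priori there could be an $R^1\pi_*$ term obstructing surjectivity of $\pi_*(F^e_*\sL_E)\to\pi_*(F^e_*\sL_E|_B)$; but since $\pi_*(F^e_*\sL_E|_B)=F^e_*(\sL_W'|_Q)$ (this is where $(\pi|_B)_*\O_B=k(Q)$ enters) and the natural restriction $F^e_*\sL_W'\to F^e_*(\sL_W'|_Q)$ is already surjective, $\pi_*$ of the restriction sequence on $E$ agrees with the restriction sequence on $W$, exactly as in the Frobenius-splitting arguments used earlier in the paper (e.g.\ in \autoref{lem.EqualCoefficientsImplyFSplitFiber} and \autoref{ex.EllipticCurveExample}). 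The remaining DVR bookkeeping, including the non-effective case where $\Image(\psi_W)$ is a fractional ideal rather than $\O_W$, is routine.
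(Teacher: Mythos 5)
Your argument is correct and is essentially the paper's proof in lightly repackaged form: after twisting to $\Delta_E^h+B$ you push forward by $\pi$, lift a global section of the fiber splitting through the surjection $H^0(E,F^e_*\sL_E)\twoheadrightarrow H^0(B,F^e_*(\sL_E|_B))$ (valid exactly because $H^0(B,\O_B)=k(Q)$), and finish with the DVR coefficient computation, which is precisely the mechanism of \autoref{eq.FSplitFiberImpliesEqualCoeffs}. The only cosmetic difference is that you obtain the easy inequality (and the containment of the global image in $A$) from compatibility together with \autoref{prop.DescriptionOfFDifferentFromFormula}, where the paper instead proves the effectivity claim \autoref{clm.Effectivity-EqualCoefficientsImplyFSplitFiber} directly and shows the lifted image is a unit of $A$; these amount to the same bookkeeping.
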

\begin{proof}
Let $c = \coeff_Q(\Delta_W)$ and let $w \in A$ be the local parameter.  Consider the map $\phi_{\Delta_E^h} : F^e_* \O_E ( (1-p^e)(K_E + \Delta_E^h)) \to K(E)$ and the induced map $\psi_W : F^e_* \sL_W \to K(W)$ on $W$ obtained by applying $\pi_*$ (note that the image really does lie in $K(W)$ since the image of $\pi_* \phi_{\Delta_E}$ lies in $K(W)$).  Let $\Delta_W^h$ denote the corresponding divisor.

\begin{claim}
\label{clm.Effectivity-EqualCoefficientsImplyFSplitFiber}
With notation as above, $\Delta^h_W \geq 0$.
\end{claim}
\begin{proof}[Proof of claim]
It suffices to show that the image of $\psi_W$ is contained in $\O_W$.  Certainly $\Image(\psi_W) = w^n \O_W$ for some $n \in \bZ$ where $w$ is the local parameter of the DVR $A$.  If $n < 0$, this implies that $w^{-1} \in \Image(\psi_W) = \Image(\pi_* \phi_{\Delta_E^h})$.  But $w^{-1}$ generates $\O_E(B) \subseteq K(E)$ as an $\O_E$-module.  This means that $\O_E(B) \subseteq \Image(\phi_{\Delta^h_E})$ which forces $B$ to be a component of the negative part of $\Delta^h_E$, see our construction in \autoref{subsec.MapsAndQDivisors}.  The latter is impossible since $\Delta^h_E$ is horizontal.  This proves the claim.
\end{proof}
We return to the proof of \autoref{lem.FSplitFiberImpliesEqualCoeffs}.
For any integer $a$, we notice that the induced map
\[
\psi_{W,a} : F^e_* (w^a \cdot \sL_W) \to K(W),
\]
induced from $\psi_W$, corresponds to the divisor $\Delta_W^h + {a \over p^e - 1} Q$, and is itself induced from
\[
\xymatrix{
\phi_{\Delta_E^h,a} : F^e_* (w^a \cdot \O_E ( (1-p^e)(K_E + \Delta_E^h))) \ar@{=}[d] \ar[r] & K(E)\\
\phi_{\Delta_E^h + {a \over p^e - 1} B} : F^e_* (\O_E ( (1-p^e)(K_E + \Delta_E^h + {a \over p^e - 1} B)))\ar[ur]
}
\]
via pushforward, a map which corresponds to $\Delta_{E}^h + {a \over p^e - 1} B$.  Note that $\Image(\phi_{\Delta_E^h+B}) \subseteq \O_E(G) \subseteq K(E)$ where $G$ is some effective horizontal Weil divisor (supported where $\Delta^h_E$ is not effective).

Notice also that the image of the global sections under $\phi_{\Delta_E^h+B}$ lies in $\pi^{-1} K(W)$.

Consider the following diagram induced as in \autoref{subsec.Fdifferent}:
\begin{equation}
\label{eq.FSplitFiberImpliesEqualCoeffs}
\xymatrix{
\Gamma(E, F^e_* \sL_E) \ar[d] \ar[r]^-{\phi_{\Delta_E^h + B}} & \Gamma(E, \O_E(G)) \ar[d]^{\delta} \\
\Gamma(B, F^e_* \sL_E|_B) \ar[r]_-{\phi_B} & \Gamma(B, K(B))
}
\end{equation}
The left vertical map is clearly surjective (since abstractly this is just $F^e_* A = \Gamma(E, F^e_* \O_E) \to \Gamma(B, F^e_* \O_B) = F^e_* A/wA$) and the bottom horizontal map has a global section mapping to $1$ by hypothesis.  Hence there exists a $\beta = \phi_{\Delta_E^h + B}(\alpha) \in \Gamma(E, \O_E(G))$ with $\delta(\beta) = 1  \in \Gamma(B, K(B))$.

\begin{claim}
The image $\phi_{\Delta_E^h + B}(\Gamma(E, F^e_* \sL_E))$ in $\Gamma(E, \O_E(G))$ equals $A$ and in particular contains $1$.
\end{claim}
\begin{proof}[Proof of claim]
We already noted that the image is contained in $K(A) = \Gamma(W, K(W))$.  It is also clearly an $A$-module.  Since $A$ is a DVR, we see that the image is equal to $w^n A$ for some $n \in \bZ$.  It suffices to show that $n = 0$. 
First notice that the image of $\phi_{\Delta^h_E+B}$ is contained in the image of $\psi_W$ which we already showed was contained inside $A$ in \autoref{clm.Effectivity-EqualCoefficientsImplyFSplitFiber}.  Thus consider $\beta \in \Gamma(E, \O_E(G))$ (defined above with $\delta(\beta) = 1$) as an element of $A$ but also as an element of $\O_{E, \eta_B} \supseteq \Gamma(E, \O_E(G))$ where $\eta_B$ is the generic point of $B$ (we are just using the fact that global section become sections at stalks).   The map $\delta$ of \autoref{eq.FSplitFiberImpliesEqualCoeffs}, at the stalk of $\eta_B$, is induced by
\[
\O_{E, \eta_B} \to \O_{E, \eta_B} / (w \cdot \O_{E, \eta_B}) = K(B).
\]
Since $\delta(\beta) = 1$, it follows that $\beta \notin w \cdot \O_{E, \eta_B} \supseteq w \cdot A$.  Thus $\beta \in A$ is a unit.  The claim follows.

\end{proof}
We return to the proof of \autoref{lem.FSplitFiberImpliesEqualCoeffs}.  The fact that the top horizontal map of \autoref{eq.FSplitFiberImpliesEqualCoeffs} has $1$ in its image forces $(W, \Delta_W^h + Q)$ to be $F$-pure and hence that $\coeff_Q(\Delta_W^h + Q) \leq 1$.  Since $\Delta_W^h \geq 0$ by \autoref{clm.Effectivity-EqualCoefficientsImplyFSplitFiber} this means that $\coeff_Q(\Delta_W^h) = 0$ and so $\Delta_W^h = 0$.  But now we have just proven that $\Delta_E^h + {a \over p^e - 1} B$ induces ${a \over p^e - 1}Q$ on $W$.  In particular, the vertical part of the fiber has the same coefficient as $\Delta_W$ as claimed.
\end{proof}

We rephrase this in a special case.  Note that since $B$ is Cartier, if we write $\Delta_E^h|_B = \Delta_B$, then $\Delta_B$ is the $F$-different of $(E, \Delta_E^h)$ \cite{DasStronglyFRegularInversion}, \cite[Proposition 7.2]{SchwedeFAdjunction}.

\begin{corollary}
\label{cor.FSplitFiberImpliesEqualCoefficients}
Let $\pi : E \to W$ be a proper dominant map between $F$-finite normal schemes with geometrically connected and normal fibers, where $W$ is the spectrum of a DVR, $W = \Spec A$, and $\O_W = \pi_* \O_E$.  Let $Q$ be the closed point of $W$ and write $B = \pi^* Q$.  Suppose that $\Delta_E = \Delta_E^h + cB$ is a $\bQ$-divisor on $E$ satisfying the conditions of \autoref{cor.GenCanonicalBundleFormula}.  If $\Delta_E^h|_B = \Delta_B$ is the different on $B$, and $(B, \Delta_B)$ is globally sub-$F$-split, then $c = \coeff_B(\Delta_E) = \coeff_Q(\Delta_W)$ where $\Delta_W$ is induced by \autoref{thm.CanonicalBundleFormla}.
\end{corollary}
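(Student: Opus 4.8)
The plan is to deduce this corollary directly from \autoref{lem.FSplitFiberImpliesEqualCoeffs}, so that the only real work is checking that the hypotheses of that lemma are met and matching up the two descriptions of the map on the fiber. First I would observe that the closed fiber $B = \pi^* Q$, being geometrically connected and normal by assumption, is in particular integral; and since geometrically normal fibers are geometrically reduced, the map $\pi$ has geometrically connected and reduced fibers. Thus, writing $\Delta_E = \Delta_E^h + \Delta_E^v$ with $\Delta_E^v = cB$, every standing hypothesis of \autoref{lem.FSplitFiberImpliesEqualCoeffs} holds in the present setting, with $\Delta_W$ the induced divisor.

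The one point requiring care is to identify the map $\phi_B : F^e_* \sL_{\Delta_E^h + B}|_B \to K(B)$ appearing in \autoref{lem.FSplitFiberImpliesEqualCoeffs} — the one induced from $\phi_{\Delta_E^h + B}$ through the diagram of \autoref{eq.AdjunctionInducingFDiff} — with the $p^{-e}$-linear map on $B$ corresponding to the divisor $\Delta_B = \Delta_E^h|_B$. This is exactly the $F$-adjunction statement recalled immediately before the corollary: since $B$ is a Cartier divisor occurring with coefficient $1$ in $\Delta_E^h + B$ and sharing no component with $\Delta_E^h$, restriction via \autoref{eq.AdjunctionInducingFDiff} produces the map computing the $F$-different of $(E, \Delta_E^h)$ along $B$, and that $F$-different is precisely the different $\Delta_B$ (see \cite{DasStronglyFRegularInversion} and \cite[Proposition 7.2]{SchwedeFAdjunction}). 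Consequently $\phi_B$ has $1$ in its image on global sections if and only if $(B, \Delta_B)$ is globally sub-$F$-split.

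Since the hypothesis of the corollary is exactly that $(B, \Delta_B)$ is globally sub-$F$-split, the condition ``$\phi_B$ has $1$ in its image on global sections'' of \autoref{lem.FSplitFiberImpliesEqualCoeffs} is satisfied, and the lemma immediately gives $c = \coeff_B(\Delta_E) = \coeff_Q(\Delta_W)$. I do not anticipate any genuine obstacle here; the argument is essentially a translation between two equivalent descriptions of the restricted map, and the only step worth spelling out in the write-up is the identification of $\phi_B$ with the sub-$F$-splitting attached to the different $\Delta_B$, which is $F$-adjunction for Cartier divisors.
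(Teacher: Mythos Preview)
Your proposal is correct and matches the paper's approach: the paper explicitly presents this corollary as a rephrasing of \autoref{lem.FSplitFiberImpliesEqualCoeffs}, noting just before the statement that since $B$ is Cartier, $\Delta_B = \Delta_E^h|_B$ is the $F$-different of $(E, \Delta_E^h)$ along $B$. Your write-up simply makes explicit the routine verification of hypotheses and the $F$-adjunction identification that the paper leaves implicit.
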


Putting together \autoref{cor.FSplitFiberImpliesEqualCoefficients} and \autoref{lem.EqualCoefficientsImplyFSplitFiber} we obtain the following.

\begin{corollary}
\label{cor.IsFrobSplitting}
Suppose $\pi : E \to W$ is a flat proper dominant map between $F$-finite normal integral schemes with geometrically connected and normal fibers and $\Delta_E$ is a $\bQ$-divisor on $E$ satisfying the conditions of \autoref{cor.GenCanonicalBundleFormula}.   Let $\Delta_W$ be the divisor on $W$ induced by $\Delta_E$.  Then the vertical components of $\Delta_E - \pi^* \Delta_{\phi_W}$ correspond to the fibers $E_t = \pi^{-1} t$ over codimension $1$ points $t \in W$, such that $(E_t, \Delta_{E_t})$ is not sub-$F$-split where $\Delta_{E}|_{E_{t}} = \Delta_{E_t}$ is the different.
\end{corollary}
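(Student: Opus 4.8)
The plan is to deduce this corollary by combining \autoref{cor.FSplitFiberImpliesEqualCoefficients} and \autoref{lem.EqualCoefficientsImplyFSplitFiber}, which together give a dictionary between the coefficient of a vertical fiber in $\Delta_E - \pi^*\Delta_W$ and the (non-)$F$-splitting of that fiber.

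First I would reduce to the local situation: the statement is local on $W$, so I may assume $W = \Spec A$ is the spectrum of a DVR with closed point $Q = t$. Then the only codimension-$1$ point of $W$ is $Q$, and the vertical part of $\Delta_E - \pi^*\Delta_W$ over $Q$ is $\big(\coeff_B(\Delta_E) - \coeff_Q(\Delta_W)\big)B$, where $B := \pi^* Q = E_t$. I would record two easy points: (1) since the fibers of $\pi$ are connected and normal, $B$ is an integral reduced scheme, so the uniformizer of $A$ generates the maximal ideal of $\O_{E,\eta_B}$; hence $\pi^* Q = B$ with multiplicity one and $\coeff_B(\pi^*\Delta_W) = \coeff_Q(\Delta_W)$; (2) writing $\Delta_E = \Delta_E^h + cB$ with $\Delta_E^h$ horizontal and $c = \coeff_B(\Delta_E)$, the different $\Delta_{E_t}$ appearing in the statement is, by definition, the different of $(E, \Delta_E^h + B)$ along $B$, i.e., $\Delta_E^h|_B$. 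So the goal becomes: $c = \coeff_Q(\Delta_W)$ if and only if $(E_t, \Delta_{E_t})$ is sub-$F$-split.

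Next I would run the two implications. For ``$c = \coeff_Q(\Delta_W)$ implies sub-$F$-split'', I apply \autoref{lem.EqualCoefficientsImplyFSplitFiber} with $B_i = B$: its hypotheses hold because $\phi_W$ extends to $\phi_E$ by condition (i) of \autoref{cor.GenCanonicalBundleFormula} and because $\coeff_B(\pi^*\Delta_W) = \coeff_B(\Delta_E)$; the lemma then says the map on $B$ induced by $\Delta_E + (1-c)B = \Delta_E^h + B$ makes $(B, \Delta_{\phi_B})$ globally sub-$F$-split, and $\Delta_{\phi_B}$ is precisely the different $\Delta_E^h|_B = \Delta_{E_t}$. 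For the converse, if $(E_t, \Delta_{E_t})$ is sub-$F$-split --- meaning the induced map on $B$ has $1$ in its image on global sections, with associated divisor the different $\Delta_E^h|_B$ --- then this is exactly the hypothesis of \autoref{cor.FSplitFiberImpliesEqualCoefficients}, which outputs $c = \coeff_B(\Delta_E) = \coeff_Q(\Delta_W)$. Combining, $B$ is a vertical component of $\Delta_E - \pi^*\Delta_W$ precisely when $(E_t, \Delta_{E_t})$ fails to be sub-$F$-split.

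I do not expect a serious obstacle here: the real content is already contained in the two cited results. The only thing requiring care is the bookkeeping identifying the different $\Delta_{E_t}$ named in the statement with the divisors $\Delta_{\phi_B}$ and $\Delta_B$ occurring in those results (both equal $\Delta_E^h|_B$), together with the small reduction that normality of the fibers forces $B$ to be integral of multiplicity one so that $\coeff_B(\pi^*\Delta_W) = \coeff_Q(\Delta_W)$. I would also remark that for the proper fiber $E_t$ the term ``sub-$F$-split'' is understood in the global sense (i.e. $1$ lies in the image on global sections), which is exactly the notion produced and consumed by the two lemmas.
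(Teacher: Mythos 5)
Your proposal is correct and follows essentially the same route as the paper: localize at the codimension-$1$ point $t$ so that $W$ becomes a DVR, then get one implication from \autoref{cor.FSplitFiberImpliesEqualCoefficients} and the converse from \autoref{lem.EqualCoefficientsImplyFSplitFiber}. The extra bookkeeping you supply (identifying $\Delta_{E_t}$ with $\Delta_E^h|_B = \Delta_{\phi_B}$ and noting that normal fibers force $B=\pi^*Q$ to be integral of multiplicity one) is consistent with, and slightly more explicit than, the paper's two-line argument.
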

\begin{proof}
We work at the stalk of $t \in W$ which is a DVR.  Then if the fiber $(E_t, \Delta_{E_t})$ is sub-$F$-split, we see that $\coeff_{E_t}(\Delta_E) = \coeff_t(\Delta_W)$ by \autoref{cor.FSplitFiberImpliesEqualCoefficients}.  Conversely, if $\coeff_{E_t}(\Delta_E) = \coeff_t(\Delta_W)$ then $(E_t, \Delta_{E_t})$ is sub-$F$-split by \autoref{lem.EqualCoefficientsImplyFSplitFiber}.
\end{proof}

This corollary completely matches our example before with the family of elliptic curves.  In fact, especially if one starts in characteristic zero, one can frequently base change the family $E\to W$ (log canonical place mapping to a log canonical center), followed by a birational transformation of $E$, so that the family obtained is exactly of the above type.  Indeed, compare with \cite[Lemma 4.7, Theorem 4.8]{DasHacon}.  We briefly discuss how to perform this procedure and explain how the results we have so far explained show how to keep track of the $F$-different as each step is being performed.

\begin{algorithm}
\label{alg.GeometricFDifferentAlg}
Suppose that $\pi : E \to W$ is a proper dominant map between normal integral $F$-finite schemes with $W$ $1$-dimensional and $\Delta_E$ is a divisor on $E$ such that if $\Delta_E = \Delta_E^h + \Delta_E^v$ is the decomposition into the vertical and horizontal parts, then $\Delta_E^h \geq 0$.  Suppose further that $L_E := (1 - p^e)(K_E + \Delta_E) \sim \pi^* L_W$.

Finally suppose that $\Delta_E$ corresponds to $\phi_E : F^e_* \O_E(L_E) \to K(E)$ and restricts to a nonzero map $\phi_W : F^e_* \O_W(L_W) \to K(W)$ as in \autoref{cor.PInverseExtendsOnSheaves}.

\begin{itemize}
\item[Step 1]  Let $\pi : E \xrightarrow{\pi_1} W_1 \xrightarrow{\rho} W$ be the Stein factorization.  By \autoref{cor.CenterToPushfwdSeparable}, $\rho : W_1 \to W$ is separable and in fact the map $\phi_E$ restricts to a map $\phi_{W_1} : F^e_* \O_{W_1}(\rho^* L_W) \to K(W_1)$ which extends $\phi_W$.  Of course, $\pi_1$ has geometrically connected fibers.  The generic fiber $(E_{\eta}, \Delta_{\eta})$ is generically $F$-split since the map $\phi_E$ extends a nonzero map.  Hence since $(\pi_1)_* \O_{E} = \O_{W_1}$, the $E_{\eta}$ is geometrically reduced and hence so are the closed fibers of $\pi_1$ outside a proper closed subset of lower codimension.
    \begin{center}
    We notice that $\phi_{W_1}$ is induced from $\phi_E$ by pushforward by $(\pi_1)_*$.
    \end{center}
\item[Step 1a]  While we hope that $W_1 = W$, we do not know whether it holds in general.  However, because $\rho$ is separable, we have the divisor $\Delta_{W_1}$ corresponding to $\phi_{W_1}$ given by the formula $\Delta_{W_1} = \rho^* \Delta_{W} - \Ram_{W_1/W}$, see \autoref{subsec.pMapsAndBaseExtension}.
    \begin{center}
    Thus to compute $\Delta_W$, it is sufficient to compute $\Delta_{W_1}$.
    \end{center}
\item[Step 2]  Suppose that there exists a proper dominant and separable map $\rho_2 : W_2 \to W_1$ such that the multi-sections of $E \to W_1$ corresponding to the components of $\Delta^h_E$ become honest sections of $E_2 \to W_2$ where $E_2$ is the normalization of the component of $E \times_{W_1} W_2$ that dominates $W_2$.  Let $\kappa : E_2 \to E,\pi_2 : E_2 \to W_2$ be the induced maps.  We notice by \autoref{prop.keyBaseExtensionsPMapsForFamilies}, our maps $\phi_{E}$ and $\phi_{W_1}$ extend uniquely to maps $\phi_{E_2}$ on $E_2$ and $\phi_{W_2}$ on $W_2$ and the corresponding divisors are $\Delta_{E_2} = \kappa^* \Delta_{E} - \pi_2^* \Ram_{W_2/W_1}$ and $\Delta_{W_2} = \rho_2^* \Delta_{W_1} - \Ram_{W_2/W_1}$.  Furthermore, $\Delta_{E_2}$ induces $\Delta_{W_2}$ via \autoref{thm.CanonicalBundleFormla}.
    \begin{center}
    Thus to compute $\Delta_{W_1}$, it is sufficient to compute $\Delta_{W_2}$.
    \end{center}
\item[Step 3]
    Now suppose that there exists $E_3$ birational to $E_2$ with $E_2 \xleftarrow{\alpha} E' \xrightarrow{\beta} E_3$ birational morphisms with $\alpha^*(K_{E_2} + \Delta_{E_2}) = \beta^*(K_{E_3} + \Delta_{E_3})$.  Suppose further that $\Delta_{E_3}$ has effective horizontal part.  Finally assume that the fibers of $\pi_3 : E_3 \to W_2$ are geometrically normal and integral (see \cite{DasHacon} as mentioned above for cases when this can be arranged).

    Note that $\phi_{E_2}$ and $\phi_{E_3}$ can be chosen to be the same map generically, and hence they induce the same map $\phi_{W_2}$.  In particular:
    \begin{center}
    We can at least compute the locus where $\Delta_{W_2}$ differs from the vertical components of $\Delta_{E_3}$ via \autoref{cor.IsFrobSplitting}.
    \end{center}
\end{itemize}
\end{algorithm}

\section{The $F$-different under reduction to characteristic $p > 0$}

\begin{lemma}
\label{lem.CantMakeDivisorBiggerAtFPT}
Suppose that $(R, \bm)$ is a normal local $F$-finite ring of characteristic $p > 0$ and that $(X = \Spec R, \Delta \geq 0)$ is a pair in characteristic $p > 0$ such that $(p^e - 1)(K_X + \Delta)$ is integral and Cartier.  Suppose that $c = \fpt(X, \Delta, \bm)$.  Further suppose that $(X, \Delta, \bm^c)$ is sharply $F$-pure (so that $p$ does not divide the denominator of $c$, \cf \cite{SchwedeSharpTestElements}).  If $\Delta' > \Delta$ is another divisor with $\Delta' - \Delta$ $\bQ$-Cartier, then $(X, \Delta', \bm^c)$ is not sharply $F$-pure.
\end{lemma}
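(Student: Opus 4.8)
The plan is to argue by contradiction. Suppose $(X, \Delta', \bm^c)$ were sharply $F$-pure; I will produce from this a strictly larger $F$-pure threshold for $(X, \Delta, \bm)$, contradicting $c = \fpt(X, \Delta, \bm)$.

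Write $D := \Delta' - \Delta$, a nonzero effective $\bQ$-Cartier divisor. We may assume $p$ does not divide the index of $K_X + \Delta'$ (otherwise sharp $F$-purity is not even defined for $(X,\Delta',\bm^c)$ and there is nothing to prove); together with the hypothesis that $(p^e-1)(K_X+\Delta)$ is integral and Cartier, this yields a positive integer $N$, prime to $p$, with $ND$ integral and Cartier. Since $X = \Spec R$ is local, the support of every prime divisor contains the closed point, so $\Supp D \ni \bm$, and every integral Cartier divisor on $X$ is principal; thus $ND = \Div(g)$ for some nonzero $g \in \bm$. Fix $e > 0$ divisible enough that $(p^e-1)(K_X+\Delta)$ and $(p^e-1)(K_X+\Delta')$ are both Cartier and $N \mid p^e-1$; since $(X,\Delta',\bm^c)$ is assumed sharply $F$-pure and $\Delta'\geq 0$, we may further assume (enlarging $e$) that the defining surjection is witnessed at this $e$. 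Realizing the relevant rank-$1$ reflexive sheaves inside $K(X)$ in the usual way (\autoref{subsec.MapsAndQDivisors}), we have
\[
\O_X\big((1-p^e)(K_X+\Delta')\big) \;=\; g^{(p^e-1)/N}\cdot\O_X\big((1-p^e)(K_X+\Delta)\big)\;\subseteq\; \O_X\big((1-p^e)(K_X+\Delta)\big),
\]
and under this inclusion $\phi_{\Delta'}^e$ is the restriction of $\phi_\Delta^e$; equivalently $\phi_{\Delta'}^e(F^e_*x) = \phi_\Delta^e\big(F^e_*(g^{(p^e-1)/N}x)\big)$, which is the standard translation of $\Delta \mapsto \Delta + \tfrac1N\Div(g)$ into pre-multiplication on maps (\cf \cite[Section 4]{BlickleSchwedeSurveyPMinusE}).

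Now unwind the assumption. Sharp $F$-purity of $(X,\Delta',\bm^c)$ means $\phi_{\Delta'}^e$ surjects onto $\O_X$ from $F^e_*\,\bm^{\lceil c(p^e-1)\rceil}\O_X((1-p^e)(K_X+\Delta'))$; translating through the displayed identity, $\phi_\Delta^e$ surjects onto $\O_X$ from $F^e_*\,g^{(p^e-1)/N}\bm^{\lceil c(p^e-1)\rceil}\O_X((1-p^e)(K_X+\Delta))$. Since $g^{(p^e-1)/N}\in\bm^{(p^e-1)/N}$, this module sits inside $F^e_*\,\bm^{\lceil c(p^e-1)\rceil+(p^e-1)/N}\O_X((1-p^e)(K_X+\Delta))$, so $\phi_\Delta^e$ a fortiori surjects onto $\O_X$ from the larger module. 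As $(p^e-1)/N$ is an integer, $\lceil c(p^e-1)\rceil+(p^e-1)/N = \big\lceil(c+\tfrac1N)(p^e-1)\big\rceil$, and therefore $(X,\Delta,\bm^{c+1/N})$ is sharply $F$-pure. Hence $\fpt(X,\Delta,\bm)\geq c+\tfrac1N > c$, a contradiction.

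I expect the only real care to be in the second paragraph: arranging a single $e$ for which all the Cartier and divisibility conditions hold simultaneously, and recalling that precomposition with the section $g^{(p^e-1)/N}$ implements $\Delta \mapsto \Delta+\tfrac1N\Div(g)$ at the level of $p^{-e}$-linear maps — both routine given \autoref{subsec.MapsAndQDivisors} and \cite{BlickleSchwedeSurveyPMinusE}. The remaining ingredient is the elementary identity $\lceil a\rceil + m = \lceil a+m\rceil$ for $m\in\bZ$. The hypothesis that $(X,\Delta,\bm^c)$ is itself sharply $F$-pure is not literally used in the contradiction above; it serves to guarantee that $p$ does not divide the denominator of $c$ and to keep everything inside the setting where the relevant indices are prime to $p$.
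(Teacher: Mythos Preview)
Your proof is correct and follows essentially the same approach as the paper: both translate the passage from $\Delta$ to $\Delta'$ into pre-multiplication by a power of an element of $\bm$, and then observe that this lands inside a strictly higher power of $\bm$ than the threshold allows. The paper argues directly (showing the evaluation image sits in $\bm$ because $\bm^{c(p^e-1)} \cdot f^{\lceil t(p^e-1)\rceil} \subseteq \bm^{c(p^e-1)+1}$ already maps into $\bm$ by definition of $c$), whereas you phrase the same containment as a contradiction yielding $\fpt \geq c + 1/N$; the underlying computation is identical.
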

\begin{proof}
By replacing $e$ with a larger number, we may also assume that $(p^e - 1)c \in \bZ$.  Write $\Delta'-\Delta = t\Div(f)$.
Consider
\[
\begin{array}{rl}
& F^e_* (\bm^{c(p^e - 1)}) \cdot \Hom_R(F^e_* R( \lceil (p^e - 1)\Delta' \rceil ), R)\\
= & F^e_* (\bm^{c(p^e - 1)} \cdot f^{\lceil t(p^e - 1)\rceil}) \cdot \Hom_R(F^e_* R( (p^e - 1)\Delta ), R)\\
\subseteq & F^e_* (\bm^{c(p^e - 1)+1} ) \cdot \Hom_R(F^e_* R( (p^e - 1)\Delta ), R)\\
\subseteq & F^e_* (\bm^{c(p^e - 1)}) \cdot \Hom_R(F^e_* R( (p^e - 1)\Delta), R)\\
\xrightarrow{\text{eval@1}} & R.
\end{array}
\]
By definition, since $c$ is the $F$-pure threshold, we see that
\[
\Image\Big(F^e_* (\bm^{c(p^e - 1) + 1}) \cdot \Hom_R(F^e_* R( (p^e - 1)\Delta), R) \xrightarrow{\text{eval@1}} R \Big) \subseteq \bm
\]
and so indeed we see that $(X, \Delta', \bm^c)$ is not sharply $F$-pure.
\end{proof}

We refer to \cite{HochsterHunekeTightClosureInEqualCharactersticZero} for details of the reduction to characteristic $p > 0$ process.

\begin{theorem}
\label{thm.ModuliPartOfFdifferentMoves}
Let $(X, \Delta \geq 0)$ be a normal pair in characteristic zero with $K_X + \Delta$ $\bQ$-Cartier, and $W$ a normal LC-center of $(X, \Delta)$. Assume that the $\mathbf{b}$-divisor $\mathbf{K+\Delta}_{\div}$ descends to $W$ as a $\bQ$-Cartier divisor; in particular $K_W+\Delta_{W, \div}$ is $\bQ$-Cartier.
We consider the behavior of $(X_p, \Delta_p)$ after reduction to characteristic $p \gg 0$.

Assume the weak ordinarity conjecture \cite{MustataSrinivasOrdinary}.
Let $Q \in W$ be a point which is not the generic point of $W$.  Then there exists infinitely many primes $p > 0$ such that if $\Delta_{W_p, \Fdiff} \geq (\Delta_{W,\Div})_p$ is the $F$-different of $(X_p, \Delta_p)$ along $W_p$, then $\Delta_{W_p, \Fdiff} = (\Delta_{W,\Div})_p$ in a neighborhood of $Q_p$.  In other words, $Q_p$ is not contained in $\Supp(\Delta_{W_p, \Fdiff} - (\Delta_{W,\Div})_p)$.

\end{theorem}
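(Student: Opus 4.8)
The idea is to reduce the statement to a local question at the point $Q$, set up a birational model realizing $W$ (and in fact a further log canonical center through $Q$) as the image of a log canonical place, and then apply the weak ordinarity conjecture together with \autoref{cor.IsFrobSplitting} to show that the relevant fiber is Frobenius split for infinitely many $p$. Concretely, first I would pass to a birational model $\pi : X' \to X$ with $K_{X'} + \Delta_{X'} = \pi^*(K_X+\Delta)$ on which there is a normal prime divisor $E$ of discrepancy $-1$ with $\pi(E) = W$, chosen so that $\pi|_E$ factors through the fixed model $W'$ on which $\mathbf{K}+\mathbf{\Delta}_{\div}$ descends; replacing $W$ by $W'$ we may assume $K_W + \Delta_{W,\div}$ is $\bQ$-Cartier and equals the different of $E \to W$ after the canonical bundle formula construction. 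By \autoref{cor.FadjunctionViaCanBundle} (after reduction to characteristic $p$), the $F$-different $\Delta_{W_p,\Fdiff}$ is induced by the different $\Delta_{E_p}$ of $K_{X'_p}+\Delta_{X'_p}$ along $E_p$ via \autoref{thm.CanonicalBundleFormla}, so $\Delta_{W_p,\Fdiff} - (\Delta_{W,\div})_p$ is exactly the correction coming from non-$F$-split fibers of $E_p \to W_p$, by \autoref{cor.IsFrobSplitting}.

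**The reduction to a single fiber.** Working in a neighborhood of $Q$, I would arrange (after a further blowup of $X'$ over the generic point of $Q$, as in Proof \#2 of \autoref{ex.EllipticCurveExample}) that there is an exceptional divisor $E_Q$ of discrepancy $-1$ over $(X, \Delta + c\,\pi^*\overline{\{Q\}})$ for the appropriate coefficient $c = 1 - \coeff_Q(\Delta_{W,\div})$, mapping onto $Q$, and that $E_Q$ is (birational to) the fiber of $E \to W$ over $Q$. Since in characteristic zero $(X, \Delta + c\,\pi^*\overline{\{Q\}})$ is log canonical with $E_Q$ a log canonical place, the fiber $E_Q$ — being the image of a log canonical center — is itself a variety admitting, on a suitable birational model, a log canonical structure with trivial log canonical class, so its minimal model has trivial (or, after the reductions in \autoref{alg.GeometricFDifferentAlg}, we may assume genuinely trivial) canonical bundle formula data. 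The point is now: by \autoref{cor.IsFrobSplitting} (reducing to the DVR $\O_{W,Q}$), $Q_p \notin \Supp(\Delta_{W_p,\Fdiff} - (\Delta_{W,\div})_p)$ if and only if the reduction $(E_Q)_p$ with its different is sub-$F$-split.

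**Invoking weak ordinarity.** The remaining content is: the reduction mod $p$ of the fiber $(E_Q, \Delta_{E_Q})$, which is a log canonical pair with $K + \Delta \sim_\bQ 0$ (at least after the birational/base-change reductions, or by passing to its own canonical bundle formula), is $F$-split for infinitely many $p$. This is precisely the statement that log canonical singularities of Calabi–Yau type become $F$-split (globally $F$-split) for infinitely many $p$ under the weak ordinarity conjecture — this is the main known consequence of \cite{MustataSrinivasOrdinary} in the global setting, combined with the characterization that global $F$-splitting of $(E_Q)_p$ is detected by surjectivity of the trace/Cartier map on $H^0$, which by Serre duality is governed by the action of Frobenius on $H^{\dim E_Q}$ of the structure sheaf of an ordinary reduction. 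I would cite \cite{MustataSrinivasOrdinary} (and \cite{TakagiAdjointIdealsAndACorrespondence}) for the local-to-global bridge and \cite{MehtaRamanathanFrobeniusSplittingAndCohomologyVanishing} for the compatibility of the splitting with the fibration. Then \autoref{cor.IsFrobSplitting} upgrades "$(E_Q)_p$ is $F$-split" to "$\coeff_{Q_p}(\Delta_{W_p,\Fdiff}) = \coeff_{Q_p}((\Delta_{W,\div})_p)$", i.e. $Q_p$ is outside the support of the difference, for each of those infinitely many $p$; the inequality $\Delta_{W_p,\Fdiff} \geq (\Delta_{W,\div})_p$ is \autoref{cor.FDiffGeqDivDiff}.

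**Main obstacle.** The hard part is making precise — in a way that survives reduction mod $p$ — that the fiber $E_Q$ over $Q$ is a variety to which weak ordinarity applies, i.e. a log canonical pair with numerically trivial log canonical class whose relevant cohomology is the one controlled by the ordinarity hypothesis. In general $E_Q$ need not be normal or have integral/normal fibers, so one must either invoke the base-change and birational cleanup of \autoref{alg.GeometricFDifferentAlg} (which in turn may require \cite{DasHacon}-type results and thus dimension restrictions, though here we only need them over the single point $Q$), or argue directly with the sub-$F$-splitting of possibly non-normal fibers via \autoref{lem.EqualCoefficientsImplyFSplitFiber}. Controlling the descent hypothesis on $\mathbf{K}+\mathbf{\Delta}_{\div}$ under reduction mod $p$ — so that $(\Delta_{W,\div})_p$ really is the divisorial part of the different of $(X_p, \Delta_p)$ and is $\bQ$-Cartier — is the second delicate point, handled by spreading out the finitely many birational models and divisors involved over a common base.
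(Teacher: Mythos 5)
Your route is genuinely different from the paper's, and it has two serious gaps. The central one is the ``invoking weak ordinarity'' step: you need the reduction mod $p$ of the fiber $(E_Q,\Delta_{E_Q})$ --- a possibly singular, possibly non-normal log pair with boundary and $\bQ$-trivial log canonical class --- to be \emph{globally} (sub-)$F$-split for infinitely many $p$. That is not a known consequence of the weak ordinarity conjecture of \cite{MustataSrinivasOrdinary}. What is known (via \cite{MustataSrinivasOrdinary} and \cite{TakagiAdjointIdealsAndACorrespondence}) is the \emph{local} statement that log canonical pairs, and log canonical thresholds, become sharply $F$-pure, respectively $F$-pure thresholds, for infinitely many $p$; the global ``log Calabi--Yau fibers become globally $F$-split'' statement you assert as ``precisely the main known consequence'' is an open strengthening (it holds in the elliptic-curve example because there the fiber is a smooth ordinary curve, but not in the generality you need). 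The second gap is structural: \autoref{cor.IsFrobSplitting} is a statement about codimension-one points $t\in W$ and requires flatness, geometrically connected and \emph{normal} fibers, and $\pi_*\O_E=\O_W$ (and \autoref{cor.FadjunctionViaCanBundle} identifies the canonical-bundle-formula divisor with the $F$-different only when the Stein factorization is trivial, which is not known in general). But the theorem's $Q$ is an arbitrary non-generic point of $W$, possibly of codimension $\geq 2$; ``reducing to the DVR $\O_{W,Q}$'' is then unavailable, and the claim to be proved --- that no component of $\Delta_{W_p,\Fdiff}-(\Delta_{W,\div})_p$ passes through $Q_p$ --- is not a statement about the single fiber over $Q$. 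The fiber-cleanup you lean on (\autoref{alg.GeometricFDifferentAlg}, \cite{DasHacon}) carries dimension and characteristic restrictions the theorem does not assume, so your ``main obstacle'' paragraph is in fact the missing proof rather than a technical footnote.

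The paper avoids both problems by working entirely with local $F$-pure thresholds rather than global splittings of fibers. It sets $c_Q=\lct(W,\Delta_{W,\div},I_Q\cdot\O_W)$, uses inversion of adjunction \cite{HaconLCInversionAdjunction} to identify this with the log canonical threshold of $(X,\Delta,I_Q)$ at $Q$, and then applies the weak-ordinarity consequence of \cite{TakagiAdjointIdealsAndACorrespondence} only in its local form: for infinitely many $p$, both $(X_p,\Delta_p,I_{Q_p}^{c_Q})$ and $(W_p,(\Delta_{W,\div})_p,(I_{Q_p}\O_W)^{c_Q})$ are sharply $F$-pure, so $c_Q$ is also the $F$-pure threshold. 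If $B=\Delta_{W_p,\Fdiff}-(\Delta_{W,\div})_p$ (which is $\geq 0$ by \autoref{cor.FDiffGeqDivDiff} and $\bQ$-Cartier thanks to the descent hypothesis) were nonzero at $Q_p$, then \autoref{lem.CantMakeDivisorBiggerAtFPT} would force $(W_p,\Delta_{W_p,\Fdiff},(I_{Q_p}\O_W)^{c_Q})$ to fail sharp $F$-purity, while $F$-adjunction \cite{SchwedeFAdjunction} applied to the sharply $F$-pure triple $(X_p,\Delta_p,I_{Q_p}^{c_Q})$ forces it to be sharply $F$-pure --- a contradiction. This argument works uniformly for $Q$ of any codimension and never needs integrality, normality, or flatness of fibers of $E\to W$, which is exactly why the canonical bundle formula machinery is absent from the paper's proof of \autoref{thm.ModuliPartOfFdifferentMoves}. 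If you want to salvage your approach, you would need to (i) restrict to codimension-one $Q$, (ii) prove the Stein factorization and fiber-normality hypotheses hold after suitable reductions, and (iii) replace the global $F$-splitting input by an argument that only uses local sharp $F$-purity --- at which point you have essentially reconstructed the paper's proof.
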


\begin{proof}
Let $I_Q \subseteq \O_X$ be the ideal of $Q$ in $X$.  Let $c_Q$ denote the log canonical threshold of $(W, \Delta_{W, \Div}, (I_Q \cdot \O_W)^t)$.  By shrinking $X$, we may assume that this is the log canonical threshold at the generic point of $Q$.  By inversion of adjunction \cite{HaconLCInversionAdjunction}, we see that $(X, \Delta, I_Q^t)$ is log canonical at $Q$ if and only if $(W, \Delta_{W, \Div}, (I_Q \cdot \O_{W})^t)$ is log canonical.
We also fix a log canonical place $E$ over $Q$ in some model $X' \to X$.

We reduce this entire setup to characteristic $p > 0$.   By the weak ordinarity conjecture and its corollaries we see that
 $(X_p, \Delta_p, (I_{Q_p})_p^{c_Q})$ is sharply $F$-pure and likewise that
$(W_p, (\Delta_{W,\Div})_p, (I_{Q_p} \cdot \O_X)_p^{c_{Q}})$ is sharply $F$-pure for infinitely many $p \gg 0$ \cite[Theorem 2.11]{TakagiAdjointIdealsAndACorrespondence}.  In particular, $c_Q$ is then the $F$-pure threshold of those pairs as well.

Of course, the $F$-different satisfies $\Delta_{W_p, \Fdiff} \geq \Delta_{W_p, E_p, \Div} \geq (\Delta_{W, E, \Div})_p$ where the first inequality follows by \autoref{cor.FDiffGeqDivDiff} and the second follows from the fact that if something reduced from characteristic zero is log canonical in some $p \gg 0$, then it is log canonical in characteristic zero.\footnote{In fact, we can easily assume that $\Delta_{W_p, E_p, \Div}$ agrees with $(\Delta_{W, E, \Div})_p$ wherever the latter is supported and over such $Q_p$ that there is a vertical divisor on the different on $E$ mapping to $Q$, but we do not need this.}  Set $B = \Delta_{W_p, \Fdiff} - (\Delta_{W, \Div})_p$ and suppose that $B > 0$ near $Q$ in order to obtain a contradiction.  Note that $B$ is $\bQ$-Cartier since $K_{W_p} + \Delta_{W_p, \Fdiff}$  and $K_{W_p}+(\Delta_{W, \div})_p$ are both $\bQ$-Cartier.  Simply apply \autoref{lem.CantMakeDivisorBiggerAtFPT} to conclude that $(W_p, \Delta_{W_p, \Fdiff}, (I_Q \cdot \O_{W})^{c_Q})$ is not sharply $F$-pure.  On the other hand $(W_p, \Delta_{W_p, \Fdiff}, (I_Q \cdot \O_{W})^{c_Q})$ is sharply $F$-pure
by $F$-adjunction (see \cite{SchwedeFAdjunction}), since $(X_p, \Delta_p, I_Q^{c_Q})$ is sharply $F$-pure.
This yields a contradiction and the result follows.
\end{proof}


The main content of the above result is that, assuming the weak ordinarity conjecture, the moduli part of the different on $W'$ is somehow semi-ample (or at least has no fixed components) if we take the reduction to characteristic $p > 0$ procedure as a replacement for taking a general member of a linear system.  This holds as long as the ${\bf K} + {\bf \Delta}_{\Div}$ descends to $W$ as a $\bQ$-Cartier divisor.

\begin{corollary}
With notation and assumptions as in \autoref{thm.ModuliPartOfFdifferentMoves}, in particular still assuming the weak ordinarity conjecture, we know that for every prime divisor $D$ on $W$, there exists infinitely many $p > 0$ such that $\coeff_{D_p}\Delta_{W, \Fdiff} = \coeff_{D_p} (\Delta_{W, \Div})_p$.
\end{corollary}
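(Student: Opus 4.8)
The plan is to obtain this as an immediate specialization of \autoref{thm.ModuliPartOfFdifferentMoves} applied to the generic point of $D$. First I would set $Q \in W$ to be the generic point of the prime divisor $D$. Since $D$ is a prime divisor on $W$ it has codimension $1$, so $W$ has positive dimension (otherwise there is nothing to prove) and $Q$ is not the generic point of $W$; hence all the hypotheses of \autoref{thm.ModuliPartOfFdifferentMoves} apply to this choice of $Q$ verbatim, as we are working under exactly the notation and assumptions of that theorem (in particular, the descent of $\mathbf{K+\Delta}_{\div}$ to $W$ and the weak ordinarity conjecture).

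Next I would invoke the conclusion of \autoref{thm.ModuliPartOfFdifferentMoves}: there exist infinitely many primes $p > 0$ for which $\Delta_{W_p, \Fdiff} = (\Delta_{W, \Div})_p$ in a Zariski neighborhood of $Q_p$. For $p \gg 0$ the reduction mod $p$ of $D$ is an irreducible divisor $D_p$ whose generic point is $Q_p$ (standard facts about the reduction to characteristic $p$ process, \cf \cite{HochsterHunekeTightClosureInEqualCharactersticZero}). Thus $Q_p$ lies in the open set on which the two divisors coincide, and reading off the order of vanishing along $D_p$ at its generic point gives $\coeff_{D_p}\Delta_{W_p, \Fdiff} = \coeff_{D_p}(\Delta_{W, \Div})_p$ for each of these infinitely many $p$. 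This is precisely the assertion of the corollary.

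I do not expect a genuine obstacle here. The statement is pointwise in $D$, so the theorem is applied once, separately, for each prime divisor $D$, and no simultaneous choice of primes working for all $D$ at once is claimed — hence no diagonal argument over infinitely many primes is needed. The only point to keep in mind — and it is the same one already handled inside the proof of \autoref{thm.ModuliPartOfFdifferentMoves} — is that $\Delta_{W_p, \Fdiff}$ is defined intrinsically in characteristic $p$ and is not a priori the reduction of a characteristic-zero divisor; what the theorem provides is the equality of divisors near $Q_p$, from which the equality of $D_p$-coefficients is automatic, so nothing further is required.
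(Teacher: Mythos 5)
Your proposal is correct and is exactly the intended derivation: the paper states this corollary without proof precisely because it follows by applying \autoref{thm.ModuliPartOfFdifferentMoves} with $Q$ the generic point of $D$ and reading off coefficients along $D_p$ from the equality of divisors near $Q_p$. Nothing further is needed.
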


\section{Further questions}

It would be nice to remove the descent hypothesis from \autoref{thm.ModuliPartOfFdifferentMoves}.

\begin{question}
It would be natural to generalize \autoref{thm.ModuliPartOfFdifferentMoves} in the following way.  Instead of simply fixing $W$, choose $W' \to W$ over which ${\bf K} + {\bf \Delta}_{\div}$ descends.  Then for each $Q \in W'$, we would like to show that the moduli part of the $F$-different, $\Delta_{W'_p, \Fdiff} - (\Delta_{W', \Div})_p$, does not pass through $Q$ (recall that the $F$-different and the divisorial part of the different are both {\bf b}-divisors).  It seems to the authors that one main problem is that if $c$ is the (sub)$\lct$ of $(W', \Delta_{W', \Div}, I_Q)$, then we do not know that $c$ is also the (sub)$\fpt$ of $(W'_p, (\Delta_{W', \Div})_p, I_Q)$ after reduction to characteristic $p > 0$ since $\Delta_{W', \Div}$ is not necessarily effective (even assuming the weak ordinarity conjecture).  See \cite{TakagiAdjointIdealsAndACorrespondence} and notice that S.~Takagi used a different definition of $F$-purity for pairs with non-effective divisors.

\end{question}

In this paper, we studied $F$-pure centers that are also log canonical centers.  However, there can be $F$-pure centers that are not log canonical centers, consider the origin in \cite[Example 7.12]{SchwedeTuckerTestIdealFiniteMaps}.

\begin{question}
Suppose that $W \subseteq X$ is an $F$-pure center of a pair $(X, \Delta)$ but suppose that $W$ is not a log canonical center.  Is there a geometric interpretation of the $F$-different $\Delta_W$?  Note that frequently such centers occur when their is some wild ramification over a divisor $E$ which dominates $W$.  Perhaps in that case there is some way to use that to characterize $\Delta_W$ by considering an alteration instead of a birational map?
\end{question}

\bibliographystyle{skalpha}
\bibliography{MainBib}

\def\cfudot#1{\ifmmode\setbox7\hbox{$\accent"5E#1$}\else
  \setbox7\hbox{\accent"5E#1}\penalty 10000\relax\fi\raise 1\ht7
  \hbox{\raise.1ex\hbox to 1\wd7{\hss.\hss}}\penalty 10000 \hskip-1\wd7\penalty
  10000\box7}
\providecommand{\bysame}{\leavevmode\hbox to3em{\hrulefill}\thinspace}
\providecommand{\MR}{\relax\ifhmode\unskip\space\fi MR}
\providecommand{\MRhref}[2]{%
  \href{http://www.ams.org/mathscinet-getitem?mr=#1}{#2}
}
\providecommand{\href}[2]{#2}
\begin{thebibliography}{MvdK92}

\bibitem[Amb99]{AmbroAdjunctionConjecture}
{\sc F.~Ambro}: \emph{The adjunction conjecture and its applications}.
  {\sf\scriptsize math.AG/9903060}

\bibitem[Amb04]{AmbroShokurovsBoundaryProperty}
{\sc F.~Ambro}: \emph{Shokurov's boundary property}, J. Differential Geom.
  \textbf{67} (2004), no.~2, 229--255. {\sf\scriptsize 2153078 (2006d:14033)}

\bibitem[Bir13]{BirkarExistenceOfFlipsMinimalModels}
{\sc C.~Birkar}: \emph{Existence of flips and minimal models for 3-folds in
  char $p$}, arXiv:1311.3098.

\bibitem[BW14]{BirkarWaldronMoriFiberSpaces}
{\sc C.~Birkar and J.~Waldron}: \emph{Existence of {M}ori fibre spaces for
  3-folds in char $p$}, arXiv:1410.4511.

\bibitem[BS13]{BlickleSchwedeSurveyPMinusE}
{\sc M.~Blickle and K.~Schwede}: \emph{{$p^{-1}$}-linear maps in algebra and
  geometry}, Commutative algebra, Springer, New York, 2013, pp.~123--205.
  {\sf\scriptsize 3051373}

\bibitem[BK05]{BrionKumarFrobeniusSplitting}
{\sc M.~Brion and S.~Kumar}: \emph{Frobenius splitting methods in geometry and
  representation theory}, Progress in Mathematics, vol. 231, Birkh\"auser
  Boston Inc., Boston, MA, 2005. {\sf\scriptsize MR2107324 (2005k:14104)}

\bibitem[Cor07]{CortiFlipsFor3FoldsAnd4Folds}
{\sc A.~Corti} (ed.): \emph{Flips for 3-folds and 4-folds}, Oxford Lecture
  Series in Mathematics and its Applications, vol.~35, Oxford University Press,
  Oxford, 2007. {\sf\scriptsize 2352762 (2008j:14031)}

\bibitem[DH15]{DasHacon}
{\sc O.~{Das} and C.~D. {Hacon}}: \emph{{On the Adjunction Formula for
  $3$-folds in characteristic $p > 5$}}, arXiv:1505.05903.

\bibitem[Das15]{DasStronglyFRegularInversion}
{\sc O.~Das}: \emph{On strongly {$F$}-regular inversion of adjunction}, J.
  Algebra \textbf{434} (2015), 207--226. {\sf\scriptsize 3342393}

\bibitem[Fed83]{FedderFPureRat}
{\sc R.~Fedder}: \emph{{$F$}-purity and rational singularity}, Trans. Amer.
  Math. Soc. \textbf{278} (1983), no.~2, 461--480. {\sf\scriptsize MR701505
  (84h:13031)}

\bibitem[FG12]{FG12}
{\sc O.~Fujino and Y.~Gongyo}: \emph{On canonical bundle formulas and
  subadjunctions}, Michigan Math. J. \textbf{61} (2012), no.~2, 255--264.
  {\sf\scriptsize 2944479}

\bibitem[Hac14]{HaconLCInversionAdjunction}
{\sc C.~D. Hacon}: \emph{On the log canonical inversion of adjunction}, Proc.
  Edinb. Math. Soc. (2) \textbf{57} (2014), no.~1, 139--143. {\sf\scriptsize
  3165017}

\bibitem[HX15]{HaconXuThreeDimensionalMinimalModel}
{\sc C.~D. Hacon and C.~Xu}: \emph{On the three dimensional minimal model
  program in positive characteristic}, J. Amer. Math. Soc. \textbf{28} (2015),
  no.~3, 711--744. {\sf\scriptsize 3327534}

\bibitem[HW02]{HaraWatanabeFRegFPure}
{\sc N.~Hara and K.-I. Watanabe}: \emph{F-regular and {F}-pure rings vs. log
  terminal and log canonical singularities}, J. Algebraic Geom. \textbf{11}
  (2002), no.~2, 363--392. {\sf\scriptsize MR1874118 (2002k:13009)}

\bibitem[Har77]{Hartshorne}
{\sc R.~Hartshorne}: \emph{Algebraic geometry}, Springer-Verlag, New York,
  1977, Graduate Texts in Mathematics, No. 52. {\sf\scriptsize MR0463157 (57
  \#3116)}

\bibitem[HH06]{HochsterHunekeTightClosureInEqualCharactersticZero}
{\sc M.~Hochster and C.~Huneke}: \emph{Tight closure in equal characteristic
  zero}, A preprint of a manuscript, 2006.

\bibitem[Kaw97]{KawamataSubadjunctionOne}
{\sc Y.~Kawamata}: \emph{Subadjunction of log canonical divisors for a
  subvariety of codimension {$2$}}, Birational algebraic geometry ({B}altimore,
  {MD}, 1996), Contemp. Math., vol. 207, Amer. Math. Soc., Providence, RI,
  1997, pp.~79--88. {\sf\scriptsize MR1462926 (99a:14024)}

\bibitem[Kaw98]{KawamataSubadjunction2}
{\sc Y.~Kawamata}: \emph{Subadjunction of log canonical divisors. {II}}, Amer.
  J. Math. \textbf{120} (1998), no.~5, 893--899. {\sf\scriptsize MR1646046
  (2000d:14020)}

\bibitem[Kol13]{Kol13}
{\sc J.~Koll{\'a}r}: \emph{Singularities of the minimal model program},
  Cambridge Tracts in Mathematics, vol. 200, Cambridge University Press,
  Cambridge, 2013, With a collaboration of S{\'a}ndor Kov{\'a}cs.
  {\sf\scriptsize 3057950}

\bibitem[K+92]{KollarFlipsAndAbundance}
{\sc J.~Koll{\'a}r and {14 coauthors}}: \emph{Flips and abundance for algebraic
  threefolds}, Soci\'et\'e Math\'ematique de France, Paris, 1992, Papers from
  the Second Summer Seminar on Algebraic Geometry held at the University of
  Utah, Salt Lake City, Utah, August 1991, Ast\'erisque No. 211 (1992).
  {\sf\scriptsize MR1225842 (94f:14013)}

\bibitem[MR85]{MehtaRamanathanFrobeniusSplittingAndCohomologyVanishing}
{\sc V.~B. Mehta and A.~Ramanathan}: \emph{Frobenius splitting and cohomology
  vanishing for {S}chubert varieties}, Ann. of Math. (2) \textbf{122} (1985),
  no.~1, 27--40. {\sf\scriptsize MR799251 (86k:14038)}

\bibitem[MvdK92]{MehtaVanDerKallenGRVanishing}
{\sc V.~B. Mehta and W.~van~der Kallen}: \emph{On a {G}rauert-{R}iemenschneider
  vanishing theorem for {F}robenius split varieties in characteristic {$p$}},
  Invent. Math. \textbf{108} (1992), no.~1, 11--13. {\sf\scriptsize 1156382
  (93a:14017)}

\bibitem[MS12]{MillerSchwedeSLCvFP}
{\sc L.~E. Miller and K.~Schwede}: \emph{Semi-log canonical vs {$F$}-pure
  singularities}, J. Algebra \textbf{349} (2012), 150--164. {\sf\scriptsize
  2853631}

\bibitem[MS11]{MustataSrinivasOrdinary}
{\sc M.~Musta{\c{t}}{\u{a}} and V.~Srinivas}: \emph{Ordinary varieties and the
  comparison between multiplier ideals and test ideals}, Nagoya Math. J.
  \textbf{204} (2011), 125--157. {\sf\scriptsize 2863367}

\bibitem[PS09]{ProkhorovShokurovSecondMainTheorem}
{\sc Y.~G. Prokhorov and V.~V. Shokurov}: \emph{Towards the second main theorem
  on complements}, J. Algebraic Geom. \textbf{18} (2009), no.~1, 151--199.
  {\sf\scriptsize 2448282 (2009i:14007)}

\bibitem[Sch08]{SchwedeSharpTestElements}
{\sc K.~Schwede}: \emph{Generalized test ideals, sharp {$F$}-purity, and sharp
  test elements}, Math. Res. Lett. \textbf{15} (2008), no.~6, 1251--1261.
  {\sf\scriptsize MR2470398}

\bibitem[Sch09]{SchwedeFAdjunction}
{\sc K.~Schwede}: \emph{{$F$}-adjunction}, Algebra Number Theory \textbf{3}
  (2009), no.~8, 907--950. {\sf\scriptsize 2587408 (2011b:14006)}

\bibitem[Sch10]{SchwedeCentersOfFPurity}
{\sc K.~Schwede}: \emph{Centers of {$F$}-purity}, Math. Z. \textbf{265} (2010),
  no.~3, 687--714. {\sf\scriptsize 2644316 (2011e:13011)}

\bibitem[SS10]{SchwedeSmithLogFanoVsGloballyFRegular}
{\sc K.~Schwede and K.~E. Smith}: \emph{Globally {$F$}-regular and log {F}ano
  varieties}, Adv. Math. \textbf{224} (2010), no.~3, 863--894. {\sf\scriptsize
  2628797 (2011e:14076)}

\bibitem[ST14]{SchwedeTuckerTestIdealFiniteMaps}
{\sc K.~Schwede and K.~Tucker}: \emph{On the behavior of test ideals under
  finite morphisms}, J. Algebraic Geom. \textbf{23} (2014), no.~3, 399--443.
  {\sf\scriptsize 3205587}

\bibitem[Tak13]{TakagiAdjointIdealsAndACorrespondence}
{\sc S.~Takagi}: \emph{Adjoint ideals and a correspondence between log
  canonicity and {$F$}-purity}, Algebra Number Theory \textbf{7} (2013), no.~4,
  917--942. {\sf\scriptsize 3095231}

\end{thebibliography}

\end{document}